\documentclass[leqno,a4paper]{article}
\usepackage{amsmath,amsthm,amssymb}
\usepackage{bbm,a4wide}

\newcommand{\cD}{\mathcal{D}}
\newcommand{\cF}{\mathcal{F}}
\newcommand{\cL}{\mathcal{L}}

\newcommand{\V}{\mathbb V}

\newcommand{\R}{{\mathbb R}}

\newcommand{\K}{{\mathbb K}}

\newcommand{\fa}{{\mathfrak{a}}}
\newcommand{\na}{{\nabla}}

\newcommand{\pa}{{\partial}}
\newcommand{\Om}{{\Omega}}
\newcommand{\Th}{{\Theta}}
\newcommand{\oOm}{{\overline{\Omega}}}
\newcommand{\oov}{{\overline{v}}}

\newcommand{\om}{{\omega}}
\newcommand{\Si}{{\Sigma}}

\newcommand{\al}{{\alpha}}

\newcommand{\hs}{{\widehat{\sigma}}}

\newtheorem{definition}{Definition}[section]
\newtheorem{remark}{Remark}[section]
\newtheorem{theorem}{Theorem}[section]
\newtheorem{lemma}[theorem]{Lemma}
\newtheorem{proposition}{Proposition}[section]
\newtheorem{corollary}{Corollary}[section]
\newtheorem{problem}{Problem}[section]
\newtheorem{assumption}{Assumption}[section]
\numberwithin{equation}{section}
\def\vs1{\vspace{1ex}}
\def\ro{\rho}

\def\O{\Omega}
\def\pa{\partial}
\def\ep{\epsilon}
\def\sph{\sphericalangle}

\def\dy{\displaystyle}
\def\be{\begin{equation}}
\def\ba{\begin{array}}
\def\ea{\end{array}}
\def\ee{\end{equation}}

\begin{document}
\title{\bf\large On nonlinear potential theory, and regular boundary
points, for the $\,p-$Laplacian in $N$ space variables }
\author{ H.~Beir\~ao da Veiga \thanks{Department of Mathematics,
 Pisa University, Via Buonarroti 1/C, 56127 Pisa, Italy,
 email: bveiga@dma.unipi.it}}
\maketitle
\begin{abstract} We turn back to some pioneering results
concerning, in particular, nonlinear potential theory and
non-homogeneous boundary value problems for the so called
$p-$Laplacian operator. Unfortunately these results, obtained at the
very beginning of the seventies, were kept in the shade. We believe
that our proofs are still of interest, in particular due to their
extreme simplicity. Moreover, some contributions seem to improve the
results quoted in the current literature.
\end{abstract}

\vspace{0.2cm}

\noindent \textbf{Keywords:} p-Laplacian, non-homogeneous Dirichlet
problem, barriers, capacitary potentials, regular boundary points.



\vspace{0.2cm}

\section{Introduction}\label{introduction}
At the very beginning of the seventies we proved a set of results
concerning nonlinear potential theory related to the so-called
$\,p-$Laplace operator. Following \cite{ricmat}, here we use the
symbol $"\,t\,"$ instead of the nowadays more common $"\,p\,"$, to
denote the leading integrability exponent (see \eqref{zeroquatro}).
In the 1972 paper \cite{ricmat}(see also \cite{c3}) we considered,
in a non-linear setting, notions such as barriers, order
preservation, capacitary potentials, regular boundary points, and so
on. This contribution seems almost forgotten in the subsequent
literature. However we believe that its topicality and interest
still remains, or has even grown. In fact, the basic ideas on which
the theory is founded, was emphasized by the original simplicity of
the broad lines. In Part I, we turn back to the results published in
reference \cite{ricmat}. We keep the presentation as close as
possible to the original paper. However, addition of suitable
remarks, together with some changes in notation, may help the
reader. By the way, we warn the reader that \cite{ricmat} is full of
small misprints, luckily very easy to single out and correct. In
Part II we turn back to an unpublished proof of a result stated in
reference \cite{ricmat}(theorem \ref{teo-unpub} below), and to a
related result proved in reference \cite{b1} (theorem \ref{teo-bvc}
below), both concerning regularity of boundary points for
$\,p$-Laplacian equations. The contribution of \cite{b1} to this
last problem was to prove H\H older continuity of the solutions to
the obstacle problem in the lower dimension $\,N-\,1\,.$ Below we
merely prove the continuity of the above solutions,
since this weaker is sufficient here. %

\vspace{0.2cm}

The main object of this work is the Dirichlet boundary value problem
\eqref{zeroseis}, whose prototype is the following problem
\begin{equation}\label{zerozero}\left\{
\begin{array}{ll}\vspace{1ex}
div \,\big(\,|\,\na \,u\,|^{t-\,2}\,\na\,u\,\big) =\,0 \ \mbox{ in }
\O\,,
\\%
u=\,\phi\ \mbox{ on } \partial \O\,.
\end{array}\right .
\end{equation}
For $\,t=\,2\,$ we get the classical Laplace equation. It is worth
noting that the theory developed in references \cite{ricmat} and
\cite{b1} could have been extended to similar, but more general,
equations. However, at that time, we were only interested in the
basic picture. Regular boundary points for the above Dirichlet
problem is here the core subject, since it establishes at any time
the direction to follow to get to its resolution. In equation
\eqref{zeroseis}, arbitrary continuous boundary data $\,\phi\,$ are
allowed. This leads us to consider two distinct notions of
solutions, generalized and variational.

\vspace{0.2cm}

We recall that a boundary point $\,y\,$ is said to be regular if to
each continuous boundary data $\,\phi\,$ the corresponding solution
is continuous in $\,y\,.$ Theorem \ref{teo-A} below (called theorem
A, in reference \cite{ricmat}) states that a point $\,y\,$ is
regular if and only if there is at $\,y\,$ a system of non-linear
barriers, see definition \ref{defzeroum}. By appealing to this last
result, we prove the theorem \ref{teo-B} (called theorem B in
reference \cite{ricmat}), which establishes that a point
$\,y\in\,\pa\,\Om\,$ is regular if and only if the $\,t-$capacitary
potentials of the sets $\,E_{\rho}\,$ satisfy \eqref{zerodezasseis},
for each positive real $\,m\,$, and each sufficiently small radius
$\,\rho\,,$ where
$$
\,E_{\rho}=\,(\complement \,\Om)(y,\,\rho)\,,
$$
denotes the complementary set of $\,\Om\,$ with respect to the
closed ball $\,\overline{I(y,\,\rho)}\,$.%

\vspace{0.2cm}

In part II, by appealing to the theorem \ref{teo-B}, we establish
two explicit, geometrical, sufficient condition for regularity. Let
us briefly illustrate these results.\par%
Denote by
\begin{equation}\label{densmedida}
\sigma(\rho)=\,\frac{|\,E_\rho\,|}{|\,I(y,\,\rho)\,|}%
\end{equation}
the density (with respect to the $N$-dimensional Lebesgue measure)
of $\,E_\rho\,$ with respect to the sphere $\,I(y,\,\rho)\,$. In
theorem \ref{teo-unpub} it is stated that there is a positive
constant $\,\Lambda\,$ such that if
\begin{equation}\label{densasr}
\big[\,\sigma(\rho)\,\big]^{\frac{t}{t-\,1}}\geq\,\Lambda\,(\log\,\log\,\rho^{-\,1}\,)^{-1}\,,
\end{equation}
for small, positive, values of $\,\rho\,,$ then the boundary point
$\,y\,$ is regular. Note that
\begin{equation}\label{melhor}
\lim_{\rho \rightarrow \,0} \,\sigma(\rho) =\,0
\end{equation}
is included, so the above condition is stronger than the usual
$N-$dimensional, external, cone property, and similar notions. This
result was already stated in the introduction of reference
\cite{ricmat} (due to a misprint, the second exponent $\,-\,1\,$ in
\eqref{densasr} was overlooked). At that time we did not publish the
proof, since we had used similar ideas in reference \cite{b1}, where
it was proved (still, appealing to theorem \ref{teo-B}) that a
boundary point $\,y\,$ is regular if a $\,(N-\,1)-$dimensional
external cone property is satisfied at the point $\,y\,$ (a
Lipschitz image of such a cone being sufficient).
See theorem \ref{teo-bvc} below.\par%
In fact, theorems \ref{teo-unpub} and \ref{teo-bvc} are corollaries
of the same result, theorem \ref{teo-nopub}, where it is proved that
the necessary and sufficient condition for regularity stated in
theorem \ref{teo-B} holds under the assumption \eqref{e62b}. The
proofs of theorems \ref{teo-nopub}, \ref{teo-unpub} and
\ref{teo-bvc} are shown in Part II below.

\vspace{0.2cm}

Our proofs do not require knowledge of particularly specialized
results. They appeal, in particular, to a suitable extension of De
Giorgi's truncation method to non-linear variational inequalities
with obstacles, following in particular reference \cite{c3} (se also
\cite{c1}). De Giorgi's truncation method was also used in reference
\cite{ziemer} to obtain the following sufficient condition for
regularity:
\begin{equation}\label{ziemer}
\limsup_{\rho \rightarrow \,0}
\,\textrm{cap}(E_{\rho}\,)\,\rho^{t-\,N}
>\,0\,,
\end{equation}
where $\,\textrm{cap}\equiv\, \textrm{cap}_{\,t}\,$ denotes (here
and in the sequel) the capacity of order $\,t\,.$ Since
$$
|\,E\,|^{\frac{N-\,t}{N}} \leq\, C\,\,\textrm{cap}_{\,t}\,E\,,
$$
condition \eqref{ziemer} leads to
\begin{equation}\label{ziemer2}
\limsup_{\rho \rightarrow \,0}
\,\frac{|\,E_\rho\,|}{|\,I(y,\,\rho)\,|} >\,0\,,
\end{equation}
which, basically, is equivalent to the $\,N-$dimensional external
cone property, as well as the corkscrew condition, stated in
\cite{heinonen}, theorem 6.31. This treatise furnishes a
wide-ranging excursion into the above and related results. See, in
particular, chapter 9.

\vspace{0.2cm}

Readers interested in a quick overlook on the main results should go
directly to definition \ref{defzeroum} and theorem \ref{teo-A}; To
definitions \ref{defzerodois} and \ref{demac}, and theorem
\ref{teo-B}; And, in Part II, to theorems \ref{teo-unpub} and
\ref{teo-bvc}.
\part{}
\section{Some definitions and main results}\label{defas}
We are concerned with the differential operator
\begin{equation}\label{zeroum}
\cL\, u =:\,div \,A(\nabla\,u)\,,
\end{equation}
where $\,A(p)\,$ denotes a continuous map from $\R^N$ into itself,
$u$ is a real function defined on an open subset of $\,\R^N\,,$ and
$\,\nabla\,u$ is its gradient. We assume the following conditions on
$\,A(p)\,$:
\begin{equation}\label{zerodois}
A(0)=\,0\,,
\end{equation}
\begin{equation}\label{zerotres}
\big(\,A(p)-\,A(q)\,\big)\cdot\,(p-\,q) > \,0\,,\quad \textrm{if}
\quad p \neq\,q\,,
\end{equation}
\begin{equation}\label{zeroquatro}
A(p)\cdot\,p \geq\,a\,|\,p\,|^t,\quad \textrm{if} \quad
|p|\geq\,p_{\,0}\,,
\end{equation}
\begin{equation}\label{zerocinco}
|A(p)|\leq\,a^{-\,1}\, |p\,|^{t-\,1},\quad \textrm{if} \quad
|p|\geq\,p_{\,0}\,,
\end{equation}
where $\,a>\,0$, $\,p_{\,0} \geq \,0\,,$ and $\,t>\,0$ are
constants. Further, $\,|\,x\,|\,$ and $\,x\cdot\,y\,$ denote,
respectively, the norm and the scalar product in $\,\R^N\,.$ Note
that the above assumptions imply $\,A(p)\cdot\,p >\,0\,,$ for all
$\,p \in\,\R^N\,.$\par%

\vspace{0.2cm}

In the following, $\,\Om\,$ is an open bounded subset of $\R^N$,
with boundary denoted by $\,\pa\,\Om\,$. We define
$H^{1,\,t}(\Om)\,$ as the completion of $\,C^1(\overline{\Omega})$
(or equivalently, $\,Lip\,(\overline{\Omega})\,)\,$ with respect to
the norm $\,\|\,v\,\|_{1,\,t}=\,\|\,v\,\|_t +\,\|\nabla\,v\,\|_t\,$.
$\,C^1(\overline{\Omega})$ is the set of functions which belong to
$\,C^0(\overline{\Omega})\,,$ and have continuous first order
partial derivatives in $\,\Om\,,$ which can be extended continuously
to $\,\overline{\Omega}\,.$ Furthermore, $\,H^{1,\,t}_0(\Om)\,$
denotes the closure in $\,H^{1,\,t}(\Om)\,$ of
$\,C^1_0(\overline{\Omega})$, the set of the
$\,C^1(\overline{\Omega})$ functions, with compact support in
$\,\Om\,$. See, for instance \cite{b9}. Furthermore,
$\,H^{1,\,t}_{loc}(\Om)\,$ denotes the set consisting of functions
defined in $\,\Om\,$, whose restriction to any $\,\Om'
\subset\,\subset \,\Om\,$ belongs to $\,H^{1,\,t}(\Om')\,.$\par%
We recall here the following property. Let $\,\phi(t)\,$ be a real,
Lipschitz continuous function of the real variable $\,t\,$, with, at
most, a finite number of points of non-differentiability. Further,
let $\,v \in\,H^{1,\,t}(\Om)\,.$ Then
$\,\phi(v(x))\in\,H^{1,\,t}(\Om)\,,$ moreover $
\pa_i\,\phi(v(x))=\,\phi'(v(x))\,\pa_i\,v(x)\,, $ a.e. in $\,\Om\,.$
In particular
\be\label{rq}%
\pa_i\,max\{v(x),\,k\}=\, \left\{\begin{array}{ll}\dy \pa_i\,v(x)
&\dy \mbox{ if }\ v(x)\geq\,k\,,\\
\hskip1cm 0 & \dy \mbox{ if }\ v(x)\leq\,k\,,%
\end{array}\right.\ee
a.e. in $\,\Om\,.$\par%
For convenience, we set
$$
\V=\,\V(\Om)=\,H^{1,\,t}(\Om)\,,  \quad
\V_0=\,\V_0(\Om)=\,H^{1,\,t}_0(\Om)\,,
$$
and so on.\par%
In the sequel we are interested in the Dirichlet problem
\begin{equation}\label{zeroseis}\left\{
\begin{array}{ll}\vspace{1ex}
\cL\,u =\,0 \ \mbox{ in } \O\,,
\\%
u=\,\phi\ \mbox{ on } \partial \O\,,
\end{array}\right .
\end{equation}
where $\,\cL\,u\,$ is defined by \eqref{zeroum}, and $\,\phi \in
\,C^0(\pa\,\Om)\,.$ In the sequel we show that to each $\,\phi \in
\,C^0(\pa\,\Om)\,$ there corresponds a unique solution $\,u
\in\,H^{1,\,t}_{loc}(\Om)\cap \,C^0(\Om)\,$ to the problem
\eqref{zeroseis}, see Theorem \ref{existesum}. This solution will be
called generalized solution.\par%
Since $\,A(p)\,$ may be merely continuous, local solutions of
problem $\,\cL\,u =\,0\,$ in $\, \O\,$ are understood in the
following, well known, weak sense. One considers the form
\begin{equation}\label{zerooito}
\fa(v,\,\psi)=:\,\int_{\Om }\, A(\na\,v\,)\cdot\,\na\,\psi\, dx\,,
\end{equation}
defined on $\,\V\times\,\V\,$, or on
$\,H^{1,\,t}_{loc}(\Om)\times\,D(\Om)\,,$ and give the following
definition.
\begin{definition}\label{nig}
We say that a function $\,u\,$ is a weak solution in $\,\Om\,$ of
problem
\begin{equation}\label{doisum} \cL \,u \equiv \,div
\,A(\na\,u)=\,0
\end{equation}
if $\,u \,$ belongs to $\, H^{1,\,t}_{loc}(\Om)\,$ and satisfies the
condition
\begin{equation}\label{doisdois}
\fa(u,\,\psi) =\,0\,, \quad \forall \,\psi\,\in \,\cD(\Om)\,.
\end{equation}
\end{definition}
Note that it immediately follows that \eqref{doisdois} holds for all
$\,\psi \in\, H^{1,\,t}(\Om)\,$ with
compact support in $\,\Om\,.$\par%
The above definition does not take into account boundary values. The
definition of generalized solution to the boundary value problem
\eqref{zeroseis}, where $\phi \in\,\,C^0(\pa\,\Om)\,,$ is given
below, see definition \ref{saneg}.  Generalized solutions to the
boundary value problem are defined as limits of suitable sequences
of variational solutions. In reference \cite{ricmat} we have used in
both cases the term "solution". However, for clarity, we decided to
use in these notes the two notions, "variational" and "generalized",
to denote related but distinct concepts.\par%
Next, we recall the definition of variational solution. Let $\,\phi
\in \,\V(\Om)\,.$ We set
\begin{equation}\label{zerodez}
\V_{\phi}(\Om)=\,\big\{\,v \in\,\V(\Om)\,:\,v-\,\phi \in
\,\V_0(\Om)\,\big\}\,.
\end{equation}
Properties (i) to (iv) below are easily shown.%

\vspace{0.2cm}

i) $\quad \fa(v,\,v-\,u) -\, \fa(u,\,v-\,u)\geq\,0\,,$ for all pair
$u,\,v \in\, \V(\Om)\,$ (monotonicity);\par%
(ii) $\quad \fa(u+\,t\,v,\,w)$ is a continuous function of the real
variable $\,t\,$, for all triad $\,u,\,v,\,w \in\,\V(\Om)\,$
(emicontinuity);\par%
(iii) $\quad \fa(v,\,v-\,u) -\, \fa(u,\,v-\,u)=\,0\,$ implies
$\,\nabla\,u=\,\nabla\,v\,$ in $\,\Om\,$; Moreover, if $\,u-\,v
\in\,\V_0(\Om)\,$ then $\,u=\,v\,$;\par%
(iv) One has (coercivity)
\begin{equation}\label{zerodoze}
\lim_{\|\,v\,\|_{1,\,t} \rightarrow \,\infty}\,
\frac{\fa(v,\,v)}{\|\,v\,\|_{1,\,t}}=\,+\,\infty\,,
\end{equation}
where $\,v\in\,\V_{\phi}(\Om)\,$.\par%
Existence and uniqueness of the solution to the following
variational problem is well known:
\begin{equation}\label{zerotreze}
u_1 \in\,\V_{\phi}(\Om)\,, \quad \fa(u_1,\,v)=\,0 \quad \forall
v\in\,\V_0(\Om)\,.
\end{equation}
Clearly, these solutions are weak solutions of \eqref{doisum}
in $\,\Om\,$. All this was already classical in the sixties.\par%
\begin{definition}\label{vasol}
The function $\,u=\,u_1 \,$ in \eqref{zerotreze} is, by definition,
the \emph{variational solution} to the Dirichlet problem
\eqref{zeroseis} when the boundary data is defined by means of an
element $\,\phi \in \,H^{1,\,t}(\Om)\,.$ In this case,
$\,u=\,\phi\,$ on $\,\partial \O\,$ means that $\,u-\,\phi \in
\,H^{1,\,t}_0(\Om)\,.$
\end{definition}
In the sequel, our first step is to extend to all continuous
boundary data $\,\phi\,$ the notion of solution. This will be done
as in reference \cite{ricmat}. Given $\,\phi \in\,\,C^0(\pa\,\Om)\,$
we consider an arbitrary sequence of functions in $\,\phi_n \in
\,C^1(\oOm)\,,$ which converge uniformly to $\,\phi\,$ on
$\,\pa\,\Om\,$, and we consider the sequence $\,u_n(x)\,$ consisting
of the variational solutions to the Dirichlet problem
\eqref{zeroseis}, with boundary data $\,\phi_n\,.$ Then we prove
(theorem \ref{teodoisquatro}) that the sequence $\,u_n(x)\,$
converges uniformly in $\,\Om\,$ to a function $\,u(x) \in
\,H^{1,\,t}_{loc}(\Om) \cap\,C^0(\,\Om)\,.$ Moreover, we show that
$\,u(x)\,$ is a weak solution in $\,\Om\,$ of problem
\eqref{doisum}, and also that it does not depend on the particular
sequence $\,\phi_n\,.$ So, to each continuous boundary data
$\,\phi\,$ there corresponds a unique element $\,u(x) \in
\,H^{1,\,t}_{loc}(\Om) \cap\,C^0(\,\Om)\,,$ obtained by the above
procedure. The above argument leads to the following, natural, definition.\par%
\begin{definition}\label{saneg}
Let $\,\phi \in\,\,C^0(\pa\,\Om)\,$ be given. By definition, the
above, unique, element $\,u(x) \in \,H^{1,\,t}_{loc}(\Om)
\cap\,C^0(\,\Om)\,$ is the generalized solution to the Dirichlet
problem \eqref{zeroseis} with the continuous boundary data
$\,\phi\,.$
\end{definition}
We anticipate the following result.
\begin{theorem}\label{existesum}
To each boundary value $\,\phi \in\,\,C^0(\pa\,\Om)\,$ there
corresponds a unique generalized solution to the Dirichlet problem
\eqref{zeroseis}.
\end{theorem}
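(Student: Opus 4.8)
The plan is to read the statement as the precise assertion that the construction preceding Definition~\ref{saneg} is well posed, so that the real work is contained in Theorem~\ref{teodoisquatro}: the variational solutions $u_n$ of \eqref{zeroseis} with smooth data $\phi_n\to\phi$ converge uniformly on $\Om$, the limit solves \eqref{doisum} weakly and lies in $H^{1,\,t}_{loc}(\Om)\cap C^0(\Om)$, and it is independent of the approximating sequence. I would build this on two ingredients: the classical well-posedness of the variational problem for smooth data, and an $L^\infty$ comparison principle.

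First I would record that, for $\phi\in C^1(\oOm)\subset\V(\Om)$, problem \eqref{zerotreze} has a unique solution: this is the Minty--Browder existence theorem applied to $\fa$, where monotonicity, hemicontinuity and coercivity are precisely properties (i), (ii), (iv), and uniqueness is contained in (iii). The decisive step is then the comparison estimate: for $\phi,\psi\in C^1(\oOm)$ with variational solutions $u_\phi,u_\psi$,
\[
\|\,u_\phi-u_\psi\,\|_{L^\infty(\Om)}\ \le\ \max_{\pa\Om}\,|\,\phi-\psi\,|\,.
\]
To prove it I would set $k=\max_{\pa\Om}(\phi-\psi)^+$; then $(\phi-\psi-k)^+$ vanishes on $\pa\Om$, so $\eta:=(u_\phi-u_\psi-k)^+\in\V_0(\Om)$, using that $u_\phi-\phi,\ u_\psi-\psi\in\V_0(\Om)$. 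Testing \eqref{zerotreze} for $u_\phi$ and for $u_\psi$ against $\eta$, subtracting, and noting $\na\eta=\chi_{\{u_\phi-u_\psi>k\}}\na(u_\phi-u_\psi)$, one gets
\[
\int_{\{u_\phi-u_\psi>k\}}\big(A(\na u_\phi)-A(\na u_\psi)\big)\cdot\na(u_\phi-u_\psi)\,dx=0\,,
\]
so by the strict monotonicity \eqref{zerotres} one has $\na\eta=0$ a.e.\ and hence $\eta\equiv 0$ (a nonnegative element of $\V_0(\Om)$ with null gradient vanishes, by the Poincar\'e inequality on the bounded set $\Om$). This gives $u_\phi-u_\psi\le k$, and the estimate follows by symmetry.

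Next I would carry out the construction. Given $\phi\in C^0(\pa\Om)$, pick $\phi_n\in C^1(\oOm)$ with $\phi_n\to\phi$ uniformly on $\pa\Om$ (extend $\phi$ to a continuous function on $\oOm$ and mollify, or approximate by polynomials). The $\phi_n$ form a Cauchy sequence in $C^0(\pa\Om)$, so by the comparison estimate the variational solutions $u_n$ form a Cauchy sequence in $L^\infty(\Om)$ and converge uniformly to some $u\in L^\infty(\Om)$. A Caccioppoli inequality — obtained by testing $\fa(u_n,\cdot)=0$ against $\zeta^t(u_n-c)$ with cut-offs $\zeta$ and using \eqref{zeroquatro}--\eqref{zerocinco} together with the uniform bound $\sup_n\|u_n\|_{L^\infty(\Om)}<\infty$ — bounds $(u_n)$ in $H^{1,\,t}(\Om')$ for every $\Om'\subset\subset\Om$, so along a subsequence $\na u_n\rightharpoonup\na u$ weakly in $L^t(\Om')$ and $u\in H^{1,\,t}_{loc}(\Om)$. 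Then I would pass to the limit in $\fa(u_n,\ps)=0$, $\ps\in\cD(\Om)$: since $\{A(\na u_n)\}$ is bounded in $L^{t/(t-1)}_{loc}$ by \eqref{zerocinco} and $u_n\to u$ strongly in $L^t_{loc}$, a Minty monotonicity argument based on \eqref{zerotres} and the hemicontinuity (ii) identifies the weak limit of $A(\na u_n)$ with $A(\na u)$, giving $\fa(u,\ps)=0$ for all $\ps\in\cD(\Om)$, i.e.\ \eqref{doisdois}. Since each $u_n$ is continuous in $\Om$ by the interior De Giorgi--Nash--Moser regularity for \eqref{doisum} and the convergence is locally uniform, $u\in C^0(\Om)$.

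Finally I would show that $u$ depends on $\phi$ only. If $\psi_n\in C^1(\oOm)$ also tends to $\phi$ uniformly on $\pa\Om$, interlace the sequences as $\phi_1,\psi_1,\phi_2,\psi_2,\dots$; by the comparison estimate the corresponding sequence of variational solutions is again uniformly Cauchy, hence has a single limit, which forces $\lim u_n$ to coincide with the limit of the $\psi_n$-solutions. Thus $u$ is uniquely determined by $\phi$, which is exactly the existence and uniqueness claimed. The hard part will be the last passage to the limit — identifying the uniform limit $u$ as a weak solution when only weak convergence of the gradients $\na u_n$ is available — which is precisely where strict monotonicity \eqref{zerotres} and hemicontinuity re-enter through Minty's device.
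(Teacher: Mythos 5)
Your proposal is correct, and it reaches the same comparison estimate $\sup_{\Om}|u_\phi-u_\psi|\le\max_{\pa\Om}|\phi-\psi|$ and the same Caccioppoli bound that the paper establishes (Corollary~\ref{coroumquatro} and Lemma~\ref{lemdoisdois}), but the step where you identify the uniform limit $u$ as a weak solution takes a genuinely different route from the paper's. You pass to a weak limit of the gradients and then run Minty's device: extract $\na u_n\rightharpoonup\na u$ in $L^t_{loc}$ and $A(\na u_n)\rightharpoonup\xi$ in $L^{t'}_{loc}$, test against a cut-off times $(u_n-u)$ to get $\limsup\int\zeta^t A(\na u_n)\cdot\na u_n\le\int\zeta^t\xi\cdot\na u$, and use monotonicity \eqref{zerotres} plus hemicontinuity (ii) to conclude $\int\xi\cdot\na\ps=\int A(\na u)\cdot\na\ps$, hence $\fa(u,\ps)=0$. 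The paper's Lemma~\ref{lemdoistres} avoids this entirely: it fixes $\Om'\subset\subset\Om$, solves the auxiliary variational Dirichlet problem $\cL u^0=0$ in $\Om'$ with boundary data $u$ itself (which makes sense since $u\in H^{1,t}(\Om')\cap C^0(\Om)$ by Lemma~\ref{lemdoisdois} and local H\H older regularity), applies the very same comparison estimate in $\Om'$ to get $\sup_{\Om'}|u_n-u^0|\le\sup_{\pa\Om'}|u_n-u|\to 0$, and concludes $u=u^0$ in $\Om'$, hence $\cL u=0$ there. Your route is standard and robust (it would survive, for instance, dropping the maximum principle in favour of mere energy bounds), but it imports the full monotone-operator machinery and requires a careful bookkeeping argument that your sketch compresses; the paper's route is more elementary in that it reuses the uniform comparison estimate a second time and never touches weak convergence of gradients or of $A(\na u_n)$. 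The remaining pieces — well-posedness of \eqref{zerotreze} via Minty--Browder for $C^1(\oOm)$ data, the interlacing argument for independence of the approximating sequence, and continuity of $u$ by uniform convergence of continuous $u_n$ — coincide in substance with the paper's.
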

It is worth noting that the auxiliary variational solutions
$\,u_n(x)\,$ used above are not necessarily continuous up to the
boundary, even though $\,\phi_n \in \,C^1(\oOm)\,.$ Even more, this
negative situation holds for generalized solutions. Hence, a crucial
problem is to study the possible continuity up to a boundary point
$\,y\,$ of the solutions to the Dirichlet problem. In this direction
we give the following definition.
\begin{definition}\label{regpoint}
We say that a point $\,y \in \,\pa\,\Om\,$ is regular, with respect
to $\,\Om$ and $\,\cL\,,$ if given an arbitrary data $\phi
\in\,\,C^0(\pa\,\Om)\,,$ the corresponding generalized solution $u$
of Dirichlet problem \eqref{zeroseis} satisfies the condition
\begin{equation}\label{zerosete}%
\lim_{x \,\in \Om\,, \,x\rightarrow \,y}\, u(x)=\,\phi(y)\,.%
\end{equation}
\end{definition}
As proved in the theorem \ref{teoquatrodois} below, the notion of
regular point has a local character.\par%
We remark that in the above definition, as in the following, we do
not assume (in any sense) that the continuous boundary data
$\,\phi\,$ is the trace on $\,\pa\,\Om\,$ of an element of
$\,H^{1,\,t}(\Om)\,.$%

\vspace{0.2cm}

For the Laplace operator, $\,A(p)=\,p\,,$ regular points have been
characterized by Wiener; see \cite{b14}, \cite{b15}, and Frostman
\cite{b5}. For linear operators with discontinuous coefficients,
$$
A_i(p)=\,\sum_{j} a_{i,\,j}(x)\,p_j\,,
$$
where
$$
\sum_{j} a_{i,\,j}(x)\,\xi_i\,\xi_j \geq\,\nu\,|\,\xi\,|^2\,,
$$
$\nu>\,0\,$, and $\,a_{i,\,j} \in\,L^{\infty}(\Om)\,,
i,\,j=\,1,...,N\,,$ such a characterization was given by Littmann,
Stampacchia, and Weinberger in \cite{b9}.%

\vspace{0.2cm}

The following definitions are crucial to the theory
(see \cite{b12}, definition 1.1, and remarks).
\begin{definition}\label{defourier}
Let $\,\Si\,$ be an open, bounded, set and $\,E \subset \overline
\Si\,$ be a measurable set. We say that $\,v \in\,H^{1,\,t}(\Si)\,$
satisfies the inequality $\,v \geq\,0\,$ on $\,E\,$ in the
$H^{1,\,t}(\Si)\,$ sense if there is a sequence $\,v_n \in
C^1(\overline \Si)\,$, convergent to $\,v\,$ in $H^{1,\,t}(\Si)\,$,
and satisfying $\,v_n \geq \,0\,$ on $\,E\,.$ Similarly, we define
$\,v \leq\,0\,$ on $\,E\,$, in the $H^{1,\,t}(\Si)\,$ sense.
Further, $\,v=\,0\,$ on $\,E\,$ if, simultaneously, $\,v \geq\,0\,$
and  $\,v \leq\,0\,$. Finally, $\,v\geq\,w\,$ on $\,E\,$, in the
$H^{1,\,t}(\Si)\,$ sense, if
$\,v-\,w\geq\,0\,$ on $\,E\,$, and so on.\par%
Furthermore, we denote respectively by $\,\sup_{E} \,v$ and
$\,\inf_{E} \,v$ the upper bound and the lower bound of $\,v\,$ on
$\,E\,$ in the $\,H^{1,\,t}(\Si)\,$ sense. Essential upper bounds
and lower bounds (i.e., up to sets of zero Lebesgue measure) are
denoted by the symbols  $\,\textrm{Sup}_{E} \,v$  and
$\,\textrm{Inf}_{E} \,v\,$, respectively.
\end{definition}

It is worth noting that the above definition is meaningless if the
$\,(N-\,2)-$ dimensional measure of set $\,E\,$ vanishes. This claim
is in general not true if we replace $\,N-\,2\,$ by $\,N-\,1\,.$ Let
us consider the following specific example, related to our results.
Assume that $\,E\,$ is an $\,(N-\,1)-$dimensional truncated cone
(see, for instance \eqref{cumum}) contained in a given sphere
$\,\Si\,$. Since elements $\,v \in\,H^{1,\,t}(\Si)\,$ do have a
trace (for instance, in the usual Sobolev's spaces sense) on the
surface $\,E\,$, it follows that if $\,v \in\,H^{1,\,t}(\Si)\cap
\,C^0(\Si)\,$ satisfies $\,v \geq\,m>\,0\,$ on $\,E\,,$ in the
$H^{1,\,t}(\Om)\,$ sense, then $\,v \geq\,m\,$ pointwisely on
$\,E\,$. However, if $\,E\,$ is an $\,N-\,2\,$ dimensional cone and
$\,t <\,N\,$, the result is not true in general. For instance the
continuous, constant, function $\,v=\,0\,$ in $\,\Si\,$ satisfies
$\,v \geq\,m>\,0,$ on $\,E\,$, in the sense of definition \ref{defourier}.\par%

\vspace{0.2cm}

To illustrate the results obtained in this work, we need additional
definitions and results. Given $y\in\,R^N$ and $\,\rho>\,0\,$, we
denote by $\,I(y,\,\rho)\,$ the open sphere with center in $y\,$ and
radius $\,\rho\,.$ If $\,B \subset\,\R^N\,,$ we set
$\,B(y,\,\rho)=\,B\,\cap\,I(y,\,\rho)\,.$ By  $\,\complement \,B\,$
and $\,\overline B\,$ we denote the complementary set and the
closure of $\,B\,$ in $\,R^N\,,$ respectively.

\vspace{0.2cm}

As in \cite{ricmat}, we give the following definitions.
\begin{definition}\label{emcima}
We say that $\,v \in \,H^{1,\,t}_{loc}(\Om)\,$ is a
\emph{supersolution} [resp., a \emph{subsolution}] in $\,\Om\,$,
with respect to the operator $\,\cL$, if
\begin{equation}\label{zerooito}
\fa(v,\,\psi)\geq\,0\,, \quad \forall \,\psi \in \cD(\Om)\,, \quad
\psi\geq\,0 \quad [\textrm{resp}.\, \psi\leq\,0\,]\,.
\end{equation}
\end{definition}
Obviously, if $\,v \in\,\V=\,H^{1,\,t}(\Om)\,$, then $\,\cD(\Om)\,$
may be replaced by $\,\V_0\,.$ Formally, a supersolution satisfies
$\,\cL \,v \leq\,0\,$ in $\,\Om\,.$\par%
The following definition generalizes Perron's notion of barrier (see
Perron \cite{perron} and Courant-Hilbert \cite{b2}, p.p. 306-312
and 341).%
\begin{definition}\label{defzeroum}
We say that there is a system of barriers at a point $\,y \in
\pa\,\Om\,$ with respect to $\cL\,$ if, given two positive arbitrary
reals $\,\rho\,$ and $\,m\,$, there exist a supersolution
$\,V\geq\,0\,$ and a subsolution $\,U\leq\,0\,$, which belong to
$\,\V \cap \,\,C^0(\Om)\,,$ and satisfy the following
conditions:%

\vspace{0.2cm}

$(j) \quad  V\geq\,m \quad \textrm{and} \quad \,U\leq\,-\,m\, \quad
\textrm{on} \quad (\pa\,\Om)\,\cap\,\complement \,I(y,\,\rho),$

\vspace{0.2cm}

$(jj) \quad \lim_{x \rightarrow\,y}\,V(x)=\,\lim_{x
\rightarrow\,y}\,U(x)=\,0\,.$
\end{definition}
In definition \ref{defzeroum}, and in the sequel, inequalities like
$\,V\geq\,m\,,$ $\,U\leq\,-\,m\,$, and so on, are to be intended in
the sense introduced in definition \ref{defourier}. Note that the
above definition does not change by restriction of the range of the
radius $\,\ro\,$ to values smaller than some positive
$\,\ro_0(y)\,$.\par%
Under suitable symmetry conditions, definition \ref{defzeroum} may
be simplified, as follows.
\begin{remark}\label{rem-1.1}
\rm{Define
\begin{equation}\label{umum}
B(p)=\,-\,A(-\-p)\,.
\end{equation}
The continuous function $\,B(p)\,$ inherits the properties
\eqref{zerodois},...,\,\eqref{zerocinco}. Furthermore, consider the
operator $\,\overline \cL \,w=\,div \,B(\na\,w)\,.$ The
transformation $\,w \rightarrow\, -\,w\,$ maps solutions of
\eqref{zerocatorze}, relative to one of the operators $\cL$ or
$\,\overline \cL\,,$ onto the solutions of \eqref{zeroquinze}
relative to the other operator, and reciprocally. Further, the same
transformation, maps supersolutions, solutions, and subsolutions,
relative to one of the operators onto, respectively, subsolutions,
solutions, and supersolutions, relative
to the other operator.\par%
In particular, if
\begin{equation}\label{umdois}
A(-\,p)=\,-\,A(p)\,,
\end{equation}
the transformation $\,w \rightarrow\, -\,w\,$ maps supersolutions
onto subsolutions, and reciprocally. In this case, it is sufficient
in definition \ref{defzeroum} to consider upper-solutions
$\,V\,$.\par%
Finally, if the function $\,A(p)\,$ is positively homogeneous
\begin{equation}\label{tressete}
A(s\,p)=\,s^{t-\,1}\,A(p), \quad \forall \,s>\,0\,,
\end{equation}
it is sufficient, in definition \ref{defzeroum}, to consider the
value $\,m=\,1\,.$}
\end{remark}
Main examples:
$\,A(p)=\,(\,1+\,|\,p\,|^2\,)^{\frac{(t-\,2)}{2}}\,p\,$ satisfies
\eqref{umdois}, and $\,A(p)=\,|\,p\,|^{t-\,2}\,p\,$ satisfies
\eqref{umdois}, and \eqref{tressete}. The differential equations
associate to this functions are the Euler equations to the extremals
of the integrals $\int \, (\,1+\,|\,\na\,u\,|^2\,)^{\frac{t}{2}}
\,dx\,$ and $\int \, |\,\na\,u\,|^t \,dx\,,$ respectively.

\vspace{0.2cm}

In section \ref{steoa} we prove the following result (see
\cite{ricmat}, theorem A):
\begin{theorem}\label{teo-A}
A point $\,y\,$ is regular if and only if there is at $\,y\,$ a
system of barriers.
\end{theorem}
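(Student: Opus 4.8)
The plan is to prove the two implications separately, both via the comparison principle for sub/supersolutions that is implicit in properties (i)–(iii) of the form $\fa$.

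\emph{Barriers $\Rightarrow$ regularity.} Fix $y\in\pa\,\Om$, assume a system of barriers exists, and fix $\phi\in C^0(\pa\,\Om)$ with generalized solution $u$. I would show $\limsup_{x\to y}u(x)\le \phi(y)$ and, symmetrically, $\liminf_{x\to y}u(x)\ge\phi(y)$. Given $\ve>0$, by continuity of $\phi$ pick $\rho>0$ so small that $\phi(z)\le\phi(y)+\ve$ for $z\in(\pa\,\Om)\cap I(y,\rho)$; let $m=2\sup_{\pa\Om}|\phi|$ and take the supersolution $V\ge 0$ from Definition \ref{defzeroum} for this $\rho,m$. Then $W:=\phi(y)+\ve+V$ is a supersolution in $\Om$ lying in $\V\cap C^0(\Om)$, and on $\pa\,\Om$ one has $W\ge\phi$ in the $H^{1,t}$ sense: on $(\pa\Om)\cap\complement I(y,\rho)$ because $V\ge m\ge |\phi|+|\phi(y)|+\ve$, and on $(\pa\Om)\cap I(y,\rho)$ because $V\ge0$ and $\phi(y)+\ve\ge\phi$. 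Applying comparison — first to the variational approximants $u_n$ (whose boundary data $\phi_n\to\phi$ uniformly, so the inequality survives with an extra $o(1)$), then passing to the uniform limit of Theorem \ref{teodoisquatro} — gives $u\le\phi(y)+\ve+V$ in $\Om$. Since $V(x)\to0$ as $x\to y$ by condition $(jj)$, we get $\limsup_{x\to y}u(x)\le\phi(y)+\ve$, and $\ve$ is arbitrary. The lower bound is identical using the subsolution $U$ and $W':=\phi(y)-\ve+U$. Hence $y$ is regular.

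\emph{Regularity $\Rightarrow$ barriers.} Suppose $y$ is regular. For fixed $\rho,m>0$ I must manufacture a supersolution $V\ge0$ and subsolution $U\le0$ in $\V\cap C^0(\Om)$ meeting $(j)$ and $(jj)$. The natural candidate for $V$ is the generalized solution with boundary data $\phi_0\in C^0(\pa\Om)$ chosen with $\phi_0\equiv 0$ in a neighborhood of $y$ on $\pa\Om$, $\phi_0\equiv m$ on $(\pa\Om)\cap\complement I(y,\rho)$, $0\le\phi_0\le m$, and $\phi_0(y)=0$. Then $V$ is a weak solution (in particular a supersolution), $V\in H^{1,t}_{loc}(\Om)\cap C^0(\Om)$, and by regularity at $y$, $V(x)\to\phi_0(y)=0$, giving $(jj)$. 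Condition $(j)$, $V\ge m$ on $(\pa\Om)\cap\complement I(y,\rho)$ in the $H^{1,t}$ sense, and $V\ge0$ globally, would follow from a boundary comparison/maximum argument for generalized solutions against the constants $0$ and $m$. The membership $V\in\V=H^{1,t}(\Om)$ (not merely $H^{1,t}_{loc}$) requires knowing that this particular generalized solution has finite global energy — here one uses that $\phi_0$ can be taken as the trace of a Lipschitz function on $\oOm$, so $V$ is in fact the variational solution in $\V_{\phi_0}(\Om)$; coercivity \eqref{zerodoze} then bounds its norm. The subsolution $U$ is obtained by the symmetric construction (or via Remark \ref{rem-1.1}), with boundary data $-\phi_0$.

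\emph{Main obstacle.} The delicate point is the regularity direction: extracting from the single qualitative hypothesis ``$y$ is regular'' enough quantitative control to assert $V\ge m$ on $(\pa\Om)\cap\complement I(y,\rho)$ \emph{in the $H^{1,t}(\Om)$ sense of Definition \ref{defourier}} and $V\in\V$ globally, rather than just pointwise near the boundary. This forces one to choose the auxiliary data $\phi_0$ Lipschitz (so that the generalized solution coincides with a variational solution with controlled energy) and to invoke a comparison principle valid up to $\pa\Om$ for variational solutions — a routine application of property (i) with the test function $(\phi_0-V)^+\in\V_0$ and its counterpart, but one that must be set up carefully so that the boundary inequalities are inherited by the $H^{1,t}$-limit of the approximating $C^1$ functions. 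I expect this bookkeeping, not any single estimate, to be where the argument needs care.
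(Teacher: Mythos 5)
Your proposal is correct and follows essentially the same route as the paper: comparison of sub/supersolutions for the direction barriers~$\Rightarrow$~regularity, and solving the boundary value problem with an auxiliary datum vanishing at~$y$ for the converse. The paper streamlines both steps: for the first direction it reduces to $C^1(\pa\,\Om)$ data via Lemma~\ref{lemtresum} (so that order preservation applies directly to variational solutions, avoiding your extra approximation bookkeeping), and for the converse it simply takes the explicit datum $h(x)=m\,|x-y|^2/\rho^2\in C^1(\pa\,\Om)$, so that $V$ is automatically the variational solution in $\V_{h}$, $V\geq0$ and $V\geq m$ on $(\pa\,\Om)\cap\complement\,I(y,\rho)$ come from the maximum principle and $V=h$ on $\pa\,\Om$, and $(jj)$ from regularity at~$y$ since $h(y)=0$ --- precisely the resolution you identify in your closing paragraph, so the ``delicate point'' you flag is handled cleanly by that concrete choice.
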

As in reference \cite{ricmat}, the symbols $\,\Om\,$ and
$\,\Sigma\,$ denote suitable open sets. However, in this rewriting,
we make the reading easier by a better use of the above symbols.

\vspace{0.2cm}

Let $\,\Si\,$ be an open bounded set, $\,E \subset \Sigma\,$ be a
closed set, and $\,m\,$ be a positive constant (the fact that
$\,\Si\,$ is assumed to be a sphere is not necessary here). We
introduce the following convex, closed, subsets of
$\,\V_0(\Sigma)\,$.
\begin{equation}\label{zeroonze}
\K_m(\Sigma)=\,\big\{\,v \in\,\V_0(\Sigma)\,:\,v \geq\,m \quad
\mbox{on}\quad E\,\big\}\,,
\end{equation}
and
\begin{equation}\label{zeroonze2}
\K_{-\,m}(\Sigma)=\,-\,\K_m(\Sigma)=\,\big\{\,v
\in\,\V_0(\Sigma)\,:\,v \leq\,-\,m \quad \mbox{on}\quad E\,\big\}\,.
\end{equation}
Inequalities are in the $\,H^{1,\,t}(\Sigma)\,$ sense.\par%
Obviously, properties (i) to (iii) hold with $\,\Om\,$ replaced by
$\,\Si\,.$ Moreover, as easily shown, the coercivity property (iv)
holds by replacing $\,v\in\,\V_{\phi}(\Om)\,$ by
$\,v\in\,\K_m(\Si)\,,$ or by $\,v\in\,\K_{-\,m}(\Si)\,.$ Hence, from
properties (i) to (iv), together with well known general theorems
(see Hartman-Stampacchia \cite{b6} and J.-L.Lions \cite{b8}),
existence and uniqueness of solutions to the following
two problems follows.\par%
\begin{equation}\label{zerocatorze}
u_2 \in\,\K_m(\Si)\,, \quad \fa(u_2,\,v-\,u_2)\geq\,0\,, \quad
\forall v\in\,\K_m(\Si)\,;
\end{equation}
\begin{equation}\label{zeroquinze}
u_3 \in\,\K_{-\,m}(\Si)\,, \quad \fa(u_3,\,v-\,u_3)\geq\,0\,, \quad
\forall v\in\,\K_{-\,m}(\Si)\,.
\end{equation}

\vspace{0.2cm}

Next we introduce the $\,t-$capacitary potentials. The following
definition is related to the notion of capacity used by Serrin in
\cite{b11}.%
\begin{definition}\label{defzerodois}
Let $\,\Sigma\,$ be an open sphere, $\,E\subset \Sigma\,$ be a
closed set, and $\,m\,$ be a positive real. The solutions to the
problems \eqref{zerocatorze} and \eqref{zeroquinze} are called
$t-$capacitary potentials of the set $\,E\,$ with respect to the
non-linear operator $\,\cL\,$, the real $\,m\,$ and the sphere
$\,\Sigma\,$. Since $\,t\,$ is fixed, we drop the label $\,t\,$.
\end{definition}
In definition \ref{defzerodois}, the dependence on the particular
fixed sphere $\,\Si\,$ is without significance. In particular, the
numerical values of the related capacities remain equivalent
provided that the distances from the sets $\,E\,$ to the boundary
$\,\pa\,\Si\,$ have a positive, fixed, lower bound. From now on we
fix, once and for all, a sphere
$$\,\Si =\,I(y_0,\,2\,R)\,
$$
such that
$$
\Om \subset \,\,I(y_0,\,R)\,.
$$
So
$$
dist(\Om,\,\pa\,\Si) \leq\,R\,.
$$%
Further, for each couple $\,y,\,\rho\,,$ where $\,y \in\,\pa\,\Om
\,$ and $\, 0<\,\rho<\,\frac{R}{2} \,,$ we set
\begin{equation}\label{erro}
E_{\rho}=\,(\complement\,\Om)\cap\,\overline{I(y,\,\rho)}\,.
\end{equation}
\begin{definition}\label{demac}
We denote by $\,u_{m,\,\rho\,}\,$ and $\,u_{-m,\,\rho\,}\,$ the
capacitary potentials of the above sets $\,E_{\rho}\,$ relative to
the values $m$ and $-\,m$ respectively.%
\end{definition}

\vspace{0.2cm}

In section \ref{steob} we prove the following result (see
\cite{ricmat}, theorem B):
\begin{theorem}\label{teo-B}
A point $\,y\in\,\pa\,\Om\,$ is regular if and only if the
capacitary potentials of the sets $\,E_{\rho}\,$ are continuous in
$\,y\,$. More precisely, if and only if
\begin{equation}\label{zerodezasseis}\left\{
\begin{array}{ll}\vspace{1ex}
\lim_{x \rightarrow \,y} u_{m,\,\rho}(x)=\,m\,,
\\%
\lim_{x \rightarrow \,y} u_{-m,\,\rho}(x)=\,-\,m\,,
\end{array}\right.
\end{equation}
for each couple $\,\rho\,$, $\,m\,$ as above (or, equivalently, for
a sequence $\,(\rho_n,\,m_n)\,$ such that
$\,(\rho_n,\,m_n)\rightarrow\, (0,\,+\,\infty)\,.$
\end{theorem}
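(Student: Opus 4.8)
The plan is to reduce everything to Theorem~\ref{teo-A}: since $y$ is regular precisely when there is a system of barriers at $y$, it suffices to show that a system of barriers exists at $y$ if and only if \eqref{zerodezasseis} holds. I would first collect the elementary properties of the potentials $u_{m,\rho}$, $u_{-m,\rho}$, all obtained by testing the variational inequalities \eqref{zerocatorze}, \eqref{zeroquinze} with truncations $\max\{u_{m,\rho},0\}$, $\min\{u_{m,\rho},m\}$ (and similarly for $u_{-m,\rho}$) and with $\pm\psi$, $\psi\in\cD(\Sigma\setminus E_\rho)$: namely $0\le u_{m,\rho}\le m$ and $-m\le u_{-m,\rho}\le 0$; $u_{m,\rho}$ (resp. $u_{-m,\rho}$) is a weak solution of \eqref{doisum} in $\Sigma\setminus E_\rho$, hence in $\Om$, so it belongs to $C^0(\Om)$ by the interior regularity already used for Theorem~\ref{existesum}; $u_{m,\rho}$ is a supersolution and $u_{-m,\rho}$ a subsolution in all of $\Sigma$; and $u_{m,\rho}=m$, $u_{-m,\rho}=-m$ on $E_\rho$, in particular on $(\pa\Om)\cap\overline{I(y,\rho)}$, in the $H^{1,t}$ sense. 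I would also record the comparison principle that follows from the monotonicity statements (i) and (iii): if $w_1$ is a subsolution and $w_2$ a supersolution in $\Om$, compatible with $\V(\Om)$, with $w_1\le w_2$ on $\pa\Om$ in the $H^{1,t}$ sense, then $w_1\le w_2$ in $\Om$.

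For ``regular $\Rightarrow$ \eqref{zerodezasseis}'' I would fix $\rho,m$ and let $U\le 0$ be the subsolution of the barrier system for that pair. On $\pa\Om$ one has $U\le u_{m,\rho}-m$: on $(\pa\Om)\cap\overline{I(y,\rho)}$ the right side is $0\ge U$, while on $(\pa\Om)\cap\complement I(y,\rho)$ one has $U\le -m\le u_{m,\rho}-m$ since $u_{m,\rho}\ge 0$. Since $u_{m,\rho}-m$ is a weak solution in $\Om$, the comparison principle gives $m+U\le u_{m,\rho}\le m$ in $\Om$; letting $x\to y$ and using $U(x)\to 0$ yields $u_{m,\rho}(x)\to m$. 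The second line of \eqref{zerodezasseis} follows symmetrically from the supersolution $V$ of the barrier system, via $u_{-m,\rho}\le V-m$ on $\pa\Om$ and hence in $\Om$.

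The converse is the substantial part. Assuming \eqref{zerodezasseis}, I would construct, for an arbitrary pair $(\rho_0,m_0)$, the barrier $V$ (the subsolution $U$ being analogous, with a datum of opposite sign and the second line of \eqref{zerodezasseis}). Take $V$ to be the variational solution of \eqref{zeroseis} with a datum $\psi\in C^1(\oOm)$ such that $0\le\psi\le m_0$, $\psi\equiv m_0$ outside $I(y,\rho_0)$, and $\psi\equiv 0$ on $\overline{I(y,\rho_0/2)}$. Then $V\in\V(\Om)\cap C^0(\Om)$ is a supersolution (being a solution), $0\le V\le m_0$ by the maximum principle, and $V\ge m_0$ on $(\pa\Om)\cap\complement I(y,\rho_0)$ in the $H^{1,t}$ sense because $V-\psi\in\V_0(\Om)$ and $\psi\equiv m_0$ there. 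The only condition to be checked is $(jj)$, i.e. $V(x)\to 0$ as $x\to y$, and this is exactly where \eqref{zerodezasseis} is used: because $V$ vanishes on $(\pa\Om)\cap\overline{I(y,\rho)}\subset E_\rho$, whereas $u_{m,\rho}=m$ there, a comparison of $V$ with the capacitary potentials $u_{m,\rho}$ performed on the shrinking balls $\Om(y,\rho_02^{-k})$ — using that $m-u_{m,\rho}$ is arbitrarily small near $y$ by \eqref{zerodezasseis} — shows that $\sup_{\Om(y,\rho_02^{-k})}V\to 0$. Once $V$ and $U$ are produced, Theorem~\ref{teo-A} gives that $y$ is regular, and the equivalence with the sequential form of \eqref{zerodezasseis} is immediate since a barrier for $(\rho,m)$ is a barrier for every $(\rho',m')$ with $\rho'\ge\rho$, $m'\le m$, as noted after Definition~\ref{defzeroum}.

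The hard step is precisely this last one: converting the purely qualitative statement ``$u_{m,\rho}(x)\to m$ at $y$'' into a quantitative decay of $V$ near $y$. The difficulty is that $u_{m,\rho}$ is pinned to $m$ only on $E_\rho$, so a single global comparison is too crude; the contraction has to be run locally on $\Om(y,\rho_02^{-k})$, carefully controlling the boundary values on the spheres $\pa I(y,\rho_02^{-k})$, and it is here that one invokes — or, following \cite{ricmat}, reproves by iterating the comparison with the $u_{\pm m,\rho}$ — a De~Giorgi type oscillation estimate for solutions vanishing on $E_\rho$. Everything else (the direction ``regular $\Rightarrow$ \eqref{zerodezasseis}'' and the listed properties of the capacitary potentials) is routine.
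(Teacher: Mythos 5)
Your ``regular $\Rightarrow$ \eqref{zerodezasseis}'' direction is sound and, if anything, more direct than the paper's: you run a straight comparison between the barrier $U$ (resp.\ $V$) and the capacitary potential inside $\Om$, whereas the paper invokes the local-character result (Theorem~\ref{teoquatrodois}) to transfer the regularity of $y$ from $\Om$ to $\Sigma-E_\rho$ and then reads off the boundary values of the potential there via Lemmas~\ref{lemumcinco} and~\ref{lemumseis}. Either route works; yours just needs the one-line check that ``$u_{m,\rho}=m$ on $E_\rho$ in the $H^{1,t}(\Si)$ sense'' implies ``$u_{m,\rho}-m\le 0$ (indeed $=0$) on $(\pa\Om)\cap\overline{I(y,\rho)}$ in the $H^{1,t}(\Om)$ sense,'' which is harmless since $(\pa\Om)\cap\overline{I(y,\rho)}\subset E_\rho$.

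The sufficient direction, however, has a genuine gap, and you have put it exactly where you flagged it. Your barrier candidate $V$ is the variational solution with a cut-off boundary datum $\psi$; for that $V$ the conditions $V\ge 0$, $V\in\V\cap C^0(\Om)$, $V$ supersolution, and $(j)$ are indeed easy, but the entire difficulty has been shoved into $(jj)$: proving $V(x)\to 0$ as $x\to y$ for a solution with merely continuous data vanishing near $y$. This is precisely a boundary-regularity statement, and converting the purely qualitative hypothesis $u_{m,\rho}(x)\to m$ into a quantitative oscillation decay for a \emph{different} solution $V$ on shrinking balls is not a comparison argument in the Lemma~\ref{lemumtres} sense (note that $m_0-\frac{m_0}{m}u_{m,\rho}$ is neither a supersolution nor, absent homogeneity of $A$, even a solution). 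The De~Giorgi iteration you gesture at is exactly what Part~II of the paper develops, at considerable length, to prove Theorem~\ref{teo-nopub}; it cannot be invoked here as if it were already available, and if it were, you would have proved far more than Theorem~\ref{teo-B}.

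The paper sidesteps all of this by choosing the barrier to \emph{be} a (shifted) capacitary potential, so that $(jj)$ is the hypothesis itself rather than something to be derived. Concretely, fix $R_0$ with $\Si\subset I(y,R_0)$ and define $k>0$ by $(k+2m)\,\rho/(2R_0)=m$; set $u:=u_{\rho/2,\,-(m+k)}$ and $V:=u+(m+k)$. Then $V\ge 0$ by Lemma~\ref{lemumcinco}, $V$ is a \emph{solution} (hence a supersolution) in $\Om\subset\Si-E_{\rho/2}$ by Lemma~\ref{lemumseis}, and $(jj)$ is immediate from the second line of \eqref{zerodezasseis}, since $\lim_{x\to y}u(x)=-(m+k)$. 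The only thing that now requires work is $(j)$, which the paper gets by comparing $u$ on the annulus $\Si-I(y,\rho/2)$ with the explicit cone subsolution $f(x)=(k+2m)R_0^{-1}|x-y|-(k+2m)$ furnished by Lemma~\ref{lemquatroum}: one checks $f\le u$ on $\pa\Si$ and on $\pa I(y,\rho/2)$ (using the choice of $k$), applies Lemma~\ref{lemumtres}, and concludes $V=u+(m+k)\ge f+(m+k)\ge m$ a.e.\ on $\Si-I(y,\rho)$, in particular on $(\pa\Om)\cap\complement I(y,\rho)$. The subsolution $U$ is obtained symmetrically from $u_{\rho/2,\,m+k}$ (or via Remark~\ref{rem-1.1}). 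Notice the division of labour is the opposite of yours: in the paper's barrier $(jj)$ is free and $(j)$ costs a comparison with a Lipschitz cone; in yours $(j)$ is free and $(jj)$ costs a regularity theorem you do not have.
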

From theorem \ref{teo-B}, together with the immersion of
$\,H^{1,\,t}(\Si)\,$ in $\,C^{0,\,1-\,\frac{N}{t}}(\overline\Si)\,$,
one gets the following result.
\begin{corollary}\label{coro-C}
Any boundary point is regular with respect to the operator $\,\cL\,$
if  $\,t>\,N\,$.
\end{corollary}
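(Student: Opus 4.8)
The plan is to deduce Corollary \ref{coro-C} directly from Theorem \ref{teo-B} together with the Sobolev embedding $H^{1,\,t}(\Si)\hookrightarrow C^{0,\,1-\,N/t}(\overline\Si)$, which is valid precisely when $\,t>\,N\,$. First I would fix an arbitrary boundary point $\,y\in\,\pa\,\Om\,$ and an arbitrary couple $\,(\rho,\,m)\,$ with $\,0<\,\rho<\,R/2\,$ and $\,m>\,0\,,$ and consider the capacitary potentials $\,u_{m,\,\rho}\,$ and $\,u_{-m,\,\rho}\,$ of the set $\,E_\rho\,$ from Definition \ref{demac}. By construction these belong to $\,\V_0(\Si)=\,H^{1,\,t}_0(\Si)\subset H^{1,\,t}(\Si)\,$. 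When $\,t>\,N\,,$ the embedding gives that every element of $\,H^{1,\,t}(\Si)\,$ has a (unique) representative in $\,C^{0,\,1-\,N/t}(\overline\Si)\,,$ so in particular $\,u_{m,\,\rho}\,$ and $\,u_{-m,\,\rho}\,$ are continuous on $\,\overline\Si\,$; moreover, since they lie in $\,\V_0(\Si)\,,$ they vanish on $\,\pa\,\Si\,$, but what matters here is their behaviour near $\,y\,$.

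Next I would identify the boundary value of these potentials at $\,y\,$. The point $\,y\,$ lies in $\,E_\rho=\,(\complement\,\Om)\cap\,\overline{I(y,\,\rho)}\,$ since $\,y\in\,\pa\,\Om\subset\complement\,\Om\,$ and $\,y\in\,\overline{I(y,\,\rho)}\,$. The defining constraint of $\,\K_m(\Si)\,$ is $\,v\geq\,m\,$ on $\,E_\rho\,$ in the $\,H^{1,\,t}(\Si)\,$ sense, while the minimization character of the variational inequality (together with the comparison principle for supersolutions) forces the capacitary potential to satisfy $\,0\leq\,u_{m,\,\rho}\leq\,m\,$ on $\,\Si\,$. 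Since $\,u_{m,\,\rho}\in C^0(\overline\Si)\,$ by the embedding, and since it satisfies $\,u_{m,\,\rho}\geq\,m\,$ on $\,E_\rho\,$ in the $H^{1,\,t}$ sense where $\,E_\rho\,$ has positive $\,(N-\,1)$-dimensional (indeed $\,N$-dimensional, as it contains a relatively open piece of $\,\complement\,\Om\,$ near $\,y$, or at least $\,y$ itself as a limit of such points)—the discussion following Definition \ref{defourier} shows that for a continuous function the constraint $\,v\geq\,m\,$ on a set carrying enough measure becomes a pointwise inequality on (the closure of) that set. Hence $\,u_{m,\,\rho}(y)=\,m\,,$ and by continuity $\,\lim_{x\to y}u_{m,\,\rho}(x)=\,m\,$. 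The identical argument applied to $\,-\,u_{-m,\,\rho}\,$ (or directly, using $\,\K_{-m}(\Si)=\,-\,\K_m(\Si)\,$) yields $\,\lim_{x\to y}u_{-m,\,\rho}(x)=\,-\,m\,$. Thus \eqref{zerodezasseis} holds for every admissible couple $\,(\rho,\,m)\,,$ so by Theorem \ref{teo-B} the point $\,y\,$ is regular; since $\,y\,$ was arbitrary, every boundary point is regular.

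The step I expect to require the most care is the passage from the $\,H^{1,\,t}(\Si)\,$-sense inequality $\,u_{m,\,\rho}\geq\,m\,$ on $\,E_\rho\,$ to the pointwise statement $\,u_{m,\,\rho}(y)=\,m\,$, i.e.\ justifying that the continuous representative actually attains the constrained value at $\,y\,$. For $\,t>\,N\,$ the Morrey embedding makes this clean: approximating sequences $\,v_n\in C^1(\overline\Si)\,$ with $\,v_n\geq\,m\,$ on $\,E_\rho\,$ converging to $\,u_{m,\,\rho}\,$ in $\,H^{1,\,t}(\Si)\,$ converge uniformly on $\,\overline\Si\,$ (again by the embedding), so the inequality $\,v_n\geq\,m\,$ on $\,E_\rho\,$ passes to the uniform limit and gives $\,u_{m,\,\rho}\geq\,m\,$ pointwise on $\,E_\rho\,$, hence at $\,y\,$; combined with the a priori bound $\,u_{m,\,\rho}\leq\,m\,$ this forces equality at $\,y\,$ and, by continuity, the limit \eqref{zerodezasseis}. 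Everything else is bookkeeping: the embedding is classical, the comparison bound $\,0\leq\,u_{m,\,\rho}\leq\,m\,$ follows from testing the variational inequality against $\,\max\{u_{m,\,\rho},\,0\}\,$ and $\,\min\{u_{m,\,\rho},\,m\}\,$ using \eqref{rq}, and Theorem \ref{teo-B} does the rest.
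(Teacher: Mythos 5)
Your proof is correct and follows exactly the route the paper indicates (Theorem \ref{teo-B} plus the Morrey embedding $H^{1,\,t}(\Si)\hookrightarrow C^{0,\,1-N/t}(\overline\Si)$ for $t>N$); the paper itself gives no further detail, and your final paragraph supplies the right fleshing-out, namely that the approximating sequence $v_n\in C^1(\overline\Si)$ with $v_n\geq m$ on $E_\rho$ converges uniformly (not merely in $H^{1,t}$), so the constraint passes to the pointwise limit at $y\in E_\rho$, while Lemma \ref{lemumcinco} gives $u_{m,\rho}\leq m$ a.e.\ and hence at $y$ by continuity. The digression in your middle paragraph about $E_\rho$ carrying ``enough measure'' is unnecessary for $t>N$ and could be dropped; the uniform-convergence argument you give afterwards is the clean and correct one and needs no measure hypothesis on $E_\rho$.
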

\section{Maximum principles and related results}
In this section we state some results concerning maximum principles,
order preservation, and similar notions. Related results may be
found, for
instance, in \cite{b13}, \cite{b3}, and \cite{b10}.\par%
The section is divided into two subsections. The first one concerns
variational solutions in $\,\Om\,$ to the non-linear boundary value
problem \eqref{zeroseis}. The second one concerns solutions to the
variational inequalities \eqref{zerocatorze} and \eqref{zeroquinze},
which describe obstacle problems in $\,\Si\,$.
\subsection{Variational solutions in $\,\Om\,.$ }
We denote by $\,|B|\,$ the Lebesgue measure of a set $\,B\,.$  By
$\,c,\,c_0,\,c_1\,,$ etc., we denote positive constants that depend,
at most, on $\,t,\,N,\,a,\,,$ and $\,p_0\,.$ The same symbol may be
used to denote different constants of the same type.\par%
One has the following \emph{maximum principle}.
\begin{lemma}\label{lemumdois}
The (variational) solution $\,u=\, u_1\,$ of problem
\eqref{zerotreze} satisfies the estimates
\begin{equation}\label{umtres}
\inf_{\pa\,\Om} \phi \leq\,\textrm{Inf}_{\,\Om} \,u
\leq\,\textrm{Sup}_{\,\Om}\, u \leq\,\sup_{\pa\,\Om} \phi\,.
\end{equation}
\end{lemma}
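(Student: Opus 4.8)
The plan is to prove the maximum principle by a standard truncation (Stampacchia-type) argument, using the variational characterization \eqref{zerotreze} and the monotonicity/coercivity built into the form $\fa$. Set $M=\sup_{\pa\Om}\phi$; I may assume $M<\infty$ (otherwise the upper estimate is vacuous). First I would observe that, by Definition \ref{defourier} and the definition of $\V_\phi(\Om)$, the function $v:=\min\{u,M\}=u-(u-M)^+$ still lies in $\V_\phi(\Om)$: indeed $v-\phi=(u-\phi)-(u-M)^+$, and since $u-\phi\in\V_0(\Om)$ while $(u-M)^+\in\V_0(\Om)$ as well — because $u-M\le u-\sup_{\pa\Om}\phi\le 0$ on $\pa\Om$ in the $H^{1,t}$ sense (this uses that $u-\phi\in\V_0$ and $\phi\le M$ on $\pa\Om$), so the nonnegative function $(u-M)^+$ vanishes on $\pa\Om$ in the $H^{1,t}$ sense and hence belongs to $\V_0(\Om)$. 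Consequently $w:=v-u=-(u-M)^+\in\V_0(\Om)$, so $w$ is an admissible test function in \eqref{zerotreze}.

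Next I would plug $w=-(u-M)^+$ into $\fa(u_1,\cdot)=0$, giving
\[
\int_\Om A(\na u)\cdot\na\big((u-M)^+\big)\,dx=0 .
\]
By \eqref{rq}, $\na((u-M)^+)=\na u\cdot\mathbbm 1_{\{u>M\}}$ a.e., so the integral equals $\int_{\{u>M\}}A(\na u)\cdot\na u\,dx$. Now I would compare with the zero function: testing the monotonicity relation (i), or more directly using $A(0)=0$ together with \eqref{zerotres}, one gets $A(p)\cdot p=\big(A(p)-A(0)\big)\cdot(p-0)>0$ for all $p\neq 0$, i.e. $A(\na u)\cdot\na u\ge 0$ a.e., with strict inequality wherever $\na u\neq 0$. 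Hence $\int_{\{u>M\}}A(\na u)\cdot\na u\,dx=0$ forces $\na u=0$ a.e. on $\{u>M\}$, i.e. $\na\big((u-M)^+\big)=0$ a.e. in $\Om$. Since $(u-M)^+\in\V_0(\Om)=H^{1,t}_0(\Om)$ and has zero gradient, it is zero (a Poincaré-type argument on $\V_0$, which is part of property (iii) applied to $u$ and $\min\{u,M\}$): therefore $u\le M$ a.e. in $\Om$, which is exactly $\textrm{Sup}_\Om u\le\sup_{\pa\Om}\phi$.

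The lower estimate $\inf_{\pa\Om}\phi\le\textrm{Inf}_\Om u$ is obtained symmetrically, applying the same argument to $-u$ with the operator $\overline\cL$ of Remark \ref{rem-1.1} (which inherits \eqref{zerodois}--\eqref{zerocinco}), or directly by testing with $(m-u)^+$ where $m=\inf_{\pa\Om}\phi$; the middle inequality $\textrm{Inf}_\Om u\le\textrm{Sup}_\Om u$ is trivial. The main point requiring care — the step I expect to be the real obstacle — is the justification that the truncations stay in the correct affine space, i.e. that $(u-M)^+\in\V_0(\Om)$ when $\phi\le M$ on $\pa\Om$ only in the $H^{1,t}$ sense of Definition \ref{defourier}; this is where one must argue with approximating sequences $\phi_n\in C^1(\oOm)$ and the composition rule for Lipschitz functions of $H^{1,t}$ elements quoted after \eqref{rq}, rather than with pointwise boundary values. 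Everything else is the routine De Giorgi/Stampacchia truncation combined with the strict monotonicity \eqref{zerotres} and $A(0)=0$.
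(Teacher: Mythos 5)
Your proof is correct and follows essentially the same truncation argument as the paper: test \eqref{zerotreze} with $-(u-k)^+$ where $k=\sup_{\partial\Omega}\phi$, use $A(0)=0$ and the strict monotonicity \eqref{zerotres} to force $\nabla u=0$ on $\{u>k\}$, and conclude $u\le k$ via the zero-gradient/$\V_0$ argument. The only difference is that you spend more time justifying that $(u-k)^+\in\V_0(\Omega)$, a point the paper asserts without comment; that extra care is reasonable but does not change the method.
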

\begin{proof}
We prove that $\,Sup_{\,\Om}\, u \leq\,k\,,$ where
$\,k=\,\sup_{\pa\,\Om} \phi\,.$ For convenience we set
$$
A(k)=\,\{x \in\,\Om:\,u(x) \geq\,k\,\}\,.
$$
If $|\,A(k)\,|=\,0\,$ the thesis is obvious. Assume that
$|\,A(k)\,|>\,0\,,$ and set $\,v=\,max \{u-\,k,\,0\}\,$. Since
$\,v\in \,H^{1,\,t}_0(\Om)\,,$ it follows from \eqref{zerotreze}
that
$$
\int_{A(k)} \,A(\na\,u)\cdot\,\na\,u \,dx=\,0\,.
$$
This equation, together with \eqref{zerodois} and \eqref{zerotres},
shows that $\,\na\,u=\,0\,$ on $\,A(k)\,$, so $\,u=\,k\,$ on
$\,A(k)\,$. This proves our thesis. A similar argument proves the
first inequality \eqref{umtres}.
\end{proof}
\begin{lemma}\label{lemumtres}
\emph{Order Preserving}: Let $w$ be a subsolution, $z$ a
supersolution, and assume that $\,w\leq\,z\,$ on $\,\pa\,\Om\,$.
Then $\,w(x)\leq\,z(x)\,$ almost everywhere in $\,\Om\,$.
\end{lemma}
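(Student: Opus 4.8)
The plan is to use the standard Browder–Stampacchia-type comparison argument based on monotonicity of the form $\fa$, testing the subsolution inequality for $w$ and the supersolution inequality for $z$ against the same nonnegative test function built from the positive part of $w-z$. First I would set $v = \max\{w-z,\,0\}$; the hypothesis $w \leq z$ on $\pa\,\Om$ (in the $H^{1,\,t}$ sense) together with the product/composition rule recalled in Section~2 for Lipschitz functions of $H^{1,\,t}$ elements gives $v \in \V_0(\Om)$, $v \geq 0$, and $\na v = \na(w-z)$ a.e.\ on the set $\{w > z\}$, $\na v = 0$ elsewhere. Since $v \in \V_0$ is a nonnegative admissible test function, the remark following Definition~\ref{emcima} lets me use $v$ (resp.\ $-v$) in place of $\psi \in \cD(\Om)$.

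Next I would write the subsolution condition for $w$ with test function $-v \leq 0$, namely $\fa(w,\,-v) \geq 0$, i.e.\ $\fa(w,\,v) \leq 0$, and the supersolution condition for $z$ with test function $v \geq 0$, namely $\fa(z,\,v) \geq 0$. Subtracting gives
\[
\int_{\{w>z\}} \big(A(\na\,w) - A(\na\,z)\big)\cdot \na(w-z)\, dx \;=\; \fa(w,\,v) - \fa(z,\,v) \;\leq\; 0 .
\]
By the strict monotonicity assumption \eqref{zerotres}, the integrand is strictly positive wherever $\na\,w \neq \na\,z$, and nonnegative otherwise; hence the integral being $\leq 0$ forces $\na\,w = \na\,z$ a.e.\ on $\{w > z\}$, so $\na v = 0$ a.e.\ in $\Om$. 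Therefore $v$ is constant on each connected component of $\Om$; being an element of $\V_0(\Om)$ it must vanish (its boundary trace is zero), so $v \equiv 0$, which is precisely $w \leq z$ a.e.\ in $\Om$.

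A few technical points deserve care, and I expect the main obstacle to be the rigorous justification that $v = \max\{w-z,0\}$ genuinely belongs to $\V_0(\Om)$ rather than merely to $\V(\Om)$: one must unwind Definition~\ref{defourier} to pass from $w - z \leq 0$ on $\pa\,\Om$ in the $H^{1,\,t}$ sense to $v \in \V_0(\Om)$, approximating $w-z$ by $C^1(\oOm)$ functions $\le 0$ near $\pa\Om$ and checking that $\max\{\cdot,0\}$ of the approximants converges in $H^{1,\,t}$ to $v$ with compact support — this is essentially the truncation-commutes-with-$H^{1,\,t}_0$ lemma. A second minor point: $a$ priori $w,z \in H^{1,\,t}_{loc}(\Om)$ only, so strictly one should localize — test against $\eta v$ for a cutoff $\eta$ — but since $w \le z$ on the whole boundary gives $v \in \V_0$, $v$ has the required compact-support approximation and no genuine localization is needed; I would simply remark that the definitions of sub/supersolution extend to such test functions. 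Once these points are dispatched, the monotonicity step is immediate and the argument closes exactly as in the proof of Lemma~\ref{lemumdois}.
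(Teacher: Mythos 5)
Your proof is correct and essentially identical to the paper's, which tests against $\eta = \min\{0,\,z-w\} = -v$ in place of your $v = \max\{w-z,\,0\}$ but otherwise proceeds step for step: test the sub- and supersolution inequalities, subtract, invoke the strict monotonicity \eqref{zerotres} to force $\na(w-z)=0$ a.e.\ on $\{w \geq z\}$, and conclude from $v \in \V_0(\Om)$. (The paper's displayed inequality \eqref{dezaset} carries a sign misprint --- it should read $\leq 0$ --- which your version correctly avoids.)
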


\begin{proof}
Set $\,\eta=\,min\{0,\,z-\,w\}\,.$ It follows that $\,\eta\in
\,H^{1,\,t}_0(\Om)\,,$ moreover $\,\eta(x) \leq\,0\,.$ By taking
into account definition \ref{emcima}, we may write
\begin{equation}\label{dezass}
\int_{\Om} \,A(\na\,w)\cdot\,\na\,\eta \,dx=\, \int_{\{w\geq\,z\}}
\,A(\na\,w)\cdot\,\na\,(z-\,w)\,dx \geq\,0\,,
\end{equation}
and
\begin{equation}\label{dezaset}
\int_{\{w\geq\,z\}} \,A(\na\,z)\cdot\,\na\,(z-\,w)\,dx \geq\,0\,,
\end{equation}
where $\,\{w\geq\,z\}=\,\{x\in\,\Om :\,w(x) \geq\,z(x)\,\}\,.$  From
\eqref{dezass} and \eqref{dezaset} it follows that
\begin{equation}\label{dezota}
\int_{\{w\geq\,z\}} \,(\,A(\na\,z) -\,A(\na\,w)\,)
\cdot\,(\,\na\,z-\,\na\,w\,) \,dx \leq\,0\,.
\end{equation}
This inequality together with \eqref{umdois} imply
$\,\,\na\,(w-\,z\,)=\,0\,$ on  $\,\{w-\,z\,\geq\,0\}\,.$ By
appealing to the hypothesis $\,w-\,z\,\leq\,0\,$ on $\,\pa\,\Om\,,$
the thesis follows.
\end{proof}
\begin{corollary}\label{coroumquatro}
If $\,u\,$ and $\,v\,$ are two (variational) solutions, which belong
respectively to $\,\V_{\phi}\,$ and $\,\V_{\psi}\,,$ then
\begin{equation}\label{umquatro}
Sup_{\Om} \,|\,u-\,v\,| \leq\,\sup_{\pa\,\Om}\, |\,\phi-\,\psi\,|\,.
\end{equation}
\end{corollary}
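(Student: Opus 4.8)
The plan is to reduce \eqref{umquatro} to the Order Preserving Lemma \ref{lemumtres}, applied twice, after absorbing the constant $\,k:=\,\sup_{\pa\,\Om}|\,\phi-\,\psi\,|\,$ into one of the solutions. (If this bound, understood in the $\,H^{1,\,t}(\Om)\,$ sense of Definition \ref{defourier}, is $\,+\,\infty\,$ there is nothing to prove, so assume it finite.) Since the form $\fa$ depends on its first argument only through the gradient, the variational solution $\,v\in\,\V_{\psi}\,$ has the property that $\,v-\,k\,$ and $\,v+\,k\,$ again satisfy $\,\fa(v\pm k,\,w)=\,0\,$ for every $\,w\in\,\V_0\,$; in particular, by Definition \ref{emcima}, $\,v-\,k\,$ is a subsolution and $\,v+\,k\,$ is a supersolution in $\,\Om\,$, while $\,u\,$, being a solution, is simultaneously a sub- and a supersolution.

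Next one translates the datum inequality $\,\phi-\,\psi\,\leq\,k\,$ on $\,\pa\,\Om\,$ into inequalities between the solutions: since $\,u-\,\phi\in\,\V_0\,$ and $\,v-\,\psi\in\,\V_0\,$, one has $\,u\,\leq\,v+\,k\,$ on $\,\pa\,\Om\,$ in the $\,H^{1,\,t}(\Om)\,$ sense, i.e. $\,(u-\,v-\,k)^{+}\in\,\V_0\,$, and symmetrically $\,v-\,k\,\leq\,u\,$ on $\,\pa\,\Om\,$. Applying Lemma \ref{lemumtres} to the subsolution $\,w=\,u\,$ and the supersolution $\,z=\,v+\,k\,$ gives $\,u\,\leq\,v+\,k\,$ a.e.\ in $\,\Om\,$; applying it to the subsolution $\,w=\,v-\,k\,$ and the supersolution $\,z=\,u\,$ gives $\,v-\,k\,\leq\,u\,$ a.e.\ in $\,\Om\,$. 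Together these yield $\,|\,u-\,v\,|\,\leq\,k\,$ a.e., that is $\,\textrm{Sup}_{\Om}\,|\,u-\,v\,|\,\leq\,k\,$, which is \eqref{umquatro}.

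The only point requiring care is the translation step, namely that a function $\,f\in\,\V\,$ with $\,f\,\leq\,0\,$ on $\,\pa\,\Om\,$ in the sense of Definition \ref{defourier} has $\,f^{+}\in\,\V_0\,$, and that $\,\V_0\,$ is stable under $\,v\mapsto\,v^{+}\,$ (and under addition); these are the standard lattice properties of $\,H^{1,\,t}_0(\Om)\,$ (cf.\ \cite{b12}, \cite{b9}). Granting them, $\,(u-\,v-\,k)^{+}=\,\big((u-\,\phi)-\,(v-\,\psi)+\,(\phi-\,\psi-\,k)\big)^{+}\in\,\V_0\,$, because the first two summands lie in $\,\V_0\,$ and $\,\phi-\,\psi-\,k\,\leq\,0\,$ on $\,\pa\,\Om\,$. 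Alternatively, one can avoid Lemma \ref{lemumtres} altogether: testing the identities $\,\fa(u,\,\eta)=\,0\,$ and $\,\fa(v,\,\eta)=\,0\,$ (valid for $\,\eta\in\,\V_0\,$ by \eqref{zerotreze}) against $\,\eta=\,(u-\,v-\,k)^{+}\,$ gives, since $\,\na\,\eta=\,\na\,u-\,\na\,v\,$ on $\,\{u-\,v>\,k\}\,$ and $\,\na\,\eta=\,0\,$ elsewhere,
$$
\int_{\{u-\,v>\,k\}}\big(A(\na\,u)-\,A(\na\,v)\big)\cdot\,(\na\,u-\,\na\,v)\,dx=\,0\,;
$$
by the strict monotonicity \eqref{zerotres} this forces $\,\na\,\eta=\,0\,$ a.e.\ in $\,\Om\,$, and then $\,\eta\equiv\,0\,$ by property (iii) (being an element of $\,\V_0\,$ with vanishing gradient), so $\,u-\,v\,\leq\,k\,$ a.e.; the reverse inequality follows by exchanging the roles of $\,(u,\,\phi)\,$ and $\,(v,\,\psi)\,$. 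I expect these stability/lattice facts for $\,\V_0\,$ to be the only genuinely non-formal ingredient.
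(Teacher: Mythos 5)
Your proof is correct and takes essentially the same route as the paper: translate $v$ by $k=\sup_{\pa\,\Om}|\phi-\psi|$ (using that $\fa$ depends only on the gradient, so $v\pm k$ are again solutions), compare boundary values, and apply the Order Preserving Lemma \ref{lemumtres} twice to get $|u-v|\le k$ a.e. The extra care you devote to the $H^{1,t}_0$ lattice facts and the alternative direct test-function argument are fine, but do not change the substance of the argument.
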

\begin{proof}
Set $\,\eta=\,\sup_{\pa\,\Om}\, |\,\phi-\,\psi\,|\,.$ The function
$\,w=\,v+\,\eta\,$ is  a variational solution in $
H^{1,\,t}_{\psi+\,\eta}(\Om)\,,$ moreover $\,u \leq\,w\,$ on
$\,\pa\,\Om\,.$ By lemma \ref{lemumtres} it follows that $\,u
\leq\,w=\,v+\,\eta\,$ a.e. in $\,\Om\,,$ that is $\,u-\,v\,
\leq\,\eta\,$ a.e. in $\,\Om\,.\,$ Similarly, one proves that $\,
v-\,u\, \leq\,\eta\,,$ a.e. in  $\,\Om\,.$ These two relations yield
the thesis.
\end{proof}
\subsection{Variational inequalities in $\,\Si\,.$ }
In this subsection $\,\Si\,,$ $\,E\,$ and $\,m\,,$ are as in
definition \ref{defzerodois}.
\begin{lemma}\label{lemumcinco}
Let $\,u=\,u_2\,$ be the solution of problem \eqref{zerocatorze}.
Then $\,u(x)\leq\,m\,$ almost everywhere in $\,\Sigma\,$. In
particular, $\,u=\,m\,$ on $\,E\,$.\par%
Analogously, the solution $\,u=\,u_3\,$ of \eqref{zeroquinze}
satisfies the inequality $\,u(x)\geq\,-\,m\,$ almost everywhere in
$\,\Sigma\,$. In particular, $\,u=\,-m\,$ on $\,E\,$.
\end{lemma}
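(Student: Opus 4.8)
The plan is to test the variational inequality \eqref{zerocatorze} against the truncated competitor $v=\min\{u,m\}$ and to read off the conclusion from the strict monotonicity \eqref{zerotres}; the statement for $u_3$ will then follow either by the same computation with $v=\max\{u,-m\}$ or, more economically, by applying the result for $u_2$ to $-u_3$ via Remark \ref{rem-1.1}.

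First I would check that $v=\min\{u,m\}$ is admissible, i.e.\ $v\in\K_m(\Si)$. Writing $v=u-(u-m)^+$ and invoking the chain rule \eqref{rq}: since $u\in\V_0(\Si)$ and $m>0$, the function $(u-m)^+$ belongs to $\V_0(\Si)$, hence so does $v$; and on $E$ the constraint $u\ge m$ gives $v=m$ there, so $v\ge m$ on $E$ in the $H^{1,t}(\Si)$ sense. Inserting this $v$ into \eqref{zerocatorze}, and noting (again by \eqref{rq}) that $\na(v-u)=-\na u$ a.e.\ on $\{u>m\}$ while $\na(v-u)=0$ a.e.\ on $\{u\le m\}$, one gets
\[
0\le\fa(u,v-u)=-\int_{\{u>m\}}A(\na u)\cdot\na u\,dx.
\]
By \eqref{zerotres} with $q=0$, together with \eqref{zerodois}, one has $A(p)\cdot p>0$ for every $p\neq0$, so the integrand is nonnegative; hence it vanishes a.e.\ on $\{u>m\}$, which forces $\na u=0$ a.e.\ on $\{u>m\}$. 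Therefore $\na(u-m)^+=0$ a.e.\ in $\Si$, and since $(u-m)^+\in\V_0(\Si)$ a Poincar\'e inequality forces $(u-m)^+\equiv0$, i.e.\ $u\le m$ a.e.\ in $\Si$. Combined with $u\ge m$ on $E$ (the defining constraint of $\K_m(\Si)$), this gives $u=m$ on $E$.

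For $u=u_3$ the argument is symmetric: test \eqref{zeroquinze} against $v=\max\{u,-m\}$, so that $\na(v-u)=-\na u$ a.e.\ on $\{u<-m\}$, and conclude $u\ge-m$ a.e.\ in $\Si$ and $u=-m$ on $E$; alternatively, put $w=-u_3$ and note, as in Remark \ref{rem-1.1}, that $w$ is the $m$-capacitary potential of $E$ for the operator $\overline\cL\,w=div\,B(\na w)$, to which the first part applies. I expect the only delicate point to be the bookkeeping around Definition \ref{defourier}: one must make sure that truncation by $\min$ or $\max$ is compatible with the $H^{1,t}(\Si)$-sense inequalities (so that $v$ genuinely lies in the admissible convex set) and, at the very end, that ``$u\le m$ a.e.\ in $\Si$'' entails ``$u\le m$ on $E$ in the $H^{1,t}(\Si)$ sense''. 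Both are routine density/approximation facts, but they are exactly what makes the otherwise one-line monotonicity computation legitimate.
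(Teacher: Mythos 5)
Your proof is correct and follows essentially the same route as the paper's: both take the competitor $v=\min\{u,m\}$, test it in \eqref{zerocatorze}, and invoke \eqref{zerodois}--\eqref{zerotres} to force $\nabla u=0$ on the superlevel set. The only difference is that you make the final step explicit (Poincar\'e on $(u-m)^+\in\V_0(\Si)$ to deduce $u\le m$ a.e.), whereas the paper compresses it into ``it readily follows''.
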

\begin{proof}
Let be $\,u=\,u_2\,$, and set $\,v=\,\min\{u,\,m\}\,.$ Since $\,v\in
\K_m(\Si)\,,$ from \eqref{zerocatorze} we get
$$
\int_{\Si} \,A(\na\,u)\cdot\,\na\,(v-\,u)\, dx \geq\,0\,,
$$
that is
\begin{equation}\label{dosdos}
\int_{B_m}\,A(\na\,u)\cdot\,\na\,u \,dx \leq\,0\,.
\end{equation}
From \eqref{dosdos}, \eqref{zerodois}, and \eqref{zerotres} it
follows that $\,\na\,u=\,0\,$ a.e. on the set $\,\{x \in\,\Si:\,u(x)
\geq\,m\,\}\,$. From this last property, since $\,u\in \K_m(\Si)\,$,
it readily follows that $\,u=\,m\,$ on $\,E\,.$ The second part of
the lemma may be proved in a similar way, or as a consequence of the
first part, together with the remark \ref{rem-1.1}.
\end{proof}
\begin{lemma}\label{lemumseis}
The solution $\,u=\,u_2\,$ of problem \eqref{zerocatorze} solves, in
$\,\Sigma-\,E\,$, the problem
\begin{equation}\label{umcinco}
\int_{\Sigma -\,E} \,A(\na\,u)\cdot\,\na\,v \,dx=\,0\,, \quad
\forall \,v\,\in H^{1,\,t}_0(\Sigma -\,E)\,.
\end{equation}
Moreover, $\,u_2\,$ is a super-solution in $\,\Sigma\,$. Similarly,
the solution $\,u=\,u_3\,$ of \eqref{zeroquinze} solves, in
$\,\Sigma-\,E\,$, the problem \eqref{umcinco}, and is a sub-solution
in $\,\Sigma\,$.
\end{lemma}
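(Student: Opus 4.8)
The plan is to read off both assertions directly from the variational inequality \eqref{zerocatorze} (resp.\ \eqref{zeroquinze}) by inserting suitable admissible competitors. The point to exploit is that a function $\,v\in H^{1,t}_0(\Sigma-\,E)\,$, extended by zero, belongs to $\,\V_0(\Sigma)\,$ and vanishes on $\,E\,$ in the $\,H^{1,t}(\Sigma)\,$ sense of definition \ref{defourier} --- indeed it is the $\,H^{1,t}(\Sigma)\,$-limit of functions of $\,C^1_0(\Sigma-\,E)\,$, each of which vanishes identically on $\,E\,$ --- so that adding to the capacitary potential $\,u_2\,$ an arbitrary real multiple of $\,v\,$ does not disturb the obstacle condition $\,v\geq\,m\,$ on $\,E\,$.

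First I would prove \eqref{umcinco}. Fix $\,v\in H^{1,t}_0(\Sigma-\,E)\,$ and a real number $\,\varepsilon\,$. By the remark just made, $\,w:=\,u_2+\,\varepsilon\,v\in \V_0(\Sigma)\,$ and $\,w\geq\,m\,$ on $\,E\,$, since there $\,\varepsilon\,v=\,0\,$ while $\,u_2=\,m\,$ on $\,E\,$ by lemma \ref{lemumcinco}. Hence $\,w\in\K_m(\Sigma)\,$, and \eqref{zerocatorze} gives $\,\fa(u_2,w-\,u_2)=\,\varepsilon\,\fa(u_2,v)\geq\,0\,$, where we used that $\,\fa(u_2,\cdot)\,$ is linear in its second argument. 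Since $\,\varepsilon\,$ is an arbitrary real, this forces $\,\fa(u_2,v)=\,0\,$; and as $\,v\,$ is supported in $\,\Sigma-\,E\,$, one has $\,\fa(u_2,v)=\,\int_{\Sigma-\,E}A(\na\,u_2)\cdot\,\na\,v\,dx\,$, which is \eqref{umcinco}.

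For the supersolution statement, take $\,\psi\in\cD(\Sigma)\,$ (equivalently $\,\psi\in\V_0(\Sigma)\,$) with $\,\psi\geq\,0\,$. Then $\,u_2+\,\psi\geq\,u_2\geq\,m\,$ on $\,E\,$, so $\,u_2+\,\psi\in\K_m(\Sigma)\,$, and \eqref{zerocatorze} yields $\,\fa(u_2,\psi)\geq\,0\,$, i.e.\ $\,u_2\,$ is a supersolution in $\,\Sigma\,$ in the sense of definition \ref{emcima}. The two claims for $\,u_3\,$ follow verbatim with $\,\K_{-\,m}(\Sigma)\,$, \eqref{zeroquinze} and $\,\psi\leq\,0\,$, or, alternatively, by applying the reflection $\,w\mapsto-\,w\,$ of remark \ref{rem-1.1}, exactly as at the end of the proof of lemma \ref{lemumcinco}.

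The only step requiring a little care --- and the natural candidate for the \emph{main obstacle}, even though here it is routine --- is the first one: checking that an element of $\,H^{1,t}_0(\Sigma-\,E)\,$ really extends to an element of $\,\V_0(\Sigma)\,$ that is $\,0\,$ on $\,E\,$ in the sense of definition \ref{defourier}, so that $\,u_2\pm\,\varepsilon\,v\,$ is a genuine competitor in the obstacle problem. Once this is granted, everything reduces to a one-line manipulation of the variational inequalities.
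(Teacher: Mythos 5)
Your proof is correct and follows essentially the same route as the paper: the paper inserts $\,u_2\pm\,\oov\,$ (the extension of $\,v\,$ by zero) as competitors in \eqref{zerocatorze} to get \eqref{umcinco}, which is just the $\,\varepsilon=\pm1\,$ case of your $\,u_2+\varepsilon v\,$ argument, and the supersolution and $\,u_3\,$ parts are handled identically. Your explicit remark that the zero extension of $\,v\in H^{1,t}_0(\Sigma-\,E)\,$ vanishes on $\,E\,$ in the sense of definition \ref{defourier} is the same (tacit) admissibility check the paper relies on, and is stated correctly.
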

\begin{proof}
Equation \eqref{zerocatorze} may be written in the form%
\begin{equation}\label{doqas}%
\int_{\Si} \,A(\na\,u)\cdot\,\na\,(w-\,u)\, dx \geq\,0\,,\,, \quad
\forall w\in\,\K_m(\Si)\,.%
\end{equation}
Given $\,v\,\in H^{1,\,t}_0(\Sigma -\,E)\,$, denote by $\,\oov\,$
the function equal to $\,v\,$ in $\,\Si -\,E\,,$ and vanishing on
$\,E\,.$ By the construction, the functions $\,u+\,\oov\,$ and
$\,u-\,\oov\,$ belong to $\,\K_m(\Si)\,.$ By replacing these
functions in equation \eqref{doqas} we obtain \eqref{umcinco}.\par%
Furthermore, $\,u\,$ is a super-solution. In fact, let $\,\psi
\in\,C^{\infty}_0(\Om)\,,$ be non-negative. Then the function
$\,w=\,u+\,\psi\,$ belongs to $\,\K_m(\Si)\,.$ By using it as test
function in equation \eqref{doqas}, one proves \eqref{zerooito}.\par%
The second part of the lemma may be obtain similarly or,
alternatively, by appealing to the remark \ref{rem-1.1}.
\end{proof}
\section{A convergence result. Proof of the existence theorem \ref{existesum}}\label{sectres}
In this section we associate to each boundary data $\,\phi\,\in
\,C^0(\pa\,\Om)\,$ a weak solution $\,u\,$ in $\,\Om\,$ of equation
\eqref{doisum}. Recall that, by definition, $\,u\,$ is a weak
solution of \eqref{doisum} in $\,\Om\,$ if $\,u \in\,
H^{1,\,t}_{loc}(\Om)\,$ satisfies \eqref{doisdois}, namely
$$
\int_{\Om} \,A(\na\,u)\cdot\,\na\,\psi \,dx=\,0\,, \quad \forall
\,\psi\,\in \,\cD(\Om)\,.
$$
As already remarked, it immediately follows that \eqref{doisdois}
holds for all $\,\psi \in\, H^{1,\,t}(\Om)\,,$ with compact support
in $\,\Om\,.$
\begin{remark}\label{remdoisum}
$\,L^{\infty}(\Om)$ solutions to equation \eqref{doisdois}
necessarily belong to $\,C^0(\Om)\,.$
\end{remark}
In fact, the above solution is locally H\H older continuous in
$\,\Om\,$, see Ladyzhenskaya-Ural'tseva \cite{b7}. Actually,
continuity may be proved by appealing to a simplification of the
argument used in Part II below.
\begin{lemma}\label{lemdoisdois}
A family of solutions to equation \eqref{doisdois}, equi-bounded in
 $\,L^{\infty}(\Om)\,,$ is necessarily  equi-bounded in
 $\,H^{1,\,t}(\Om')\,,$ for each $\,\Om' \subset\,\subset \,\Om\,.$
\end{lemma}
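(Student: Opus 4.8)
The plan is to derive a local Caccioppoli (energy) estimate: fix $\Om'\subset\subset\Om$ and a cutoff $\zeta\in C_0^\infty(\Om)$ with $0\le\zeta\le1$, $\zeta\equiv1$ on $\Om'$, and $|\na\zeta|\le C/d$ where $d=\mathrm{dist}(\Om',\pa\Om)$. If $u$ is a solution to \eqref{doisdois} bounded in $L^\infty$ by $M$, test the equation with $\psi=\zeta^t(u-k)$ where $k$ is a suitable constant (e.g. $k=0$, or $k$ the average of $u$, depending on whether $p_0=0$); this is admissible since $\psi\in H^{1,t}(\Om)$ with compact support in $\Om$. Expanding $\na\psi=\zeta^t\na u+t\zeta^{t-1}(u-k)\na\zeta$ gives
\begin{equation}\label{caccio1}
\int_{\Om}\zeta^t\,A(\na u)\cdot\na u\,dx=-t\int_{\Om}\zeta^{t-1}(u-k)\,A(\na u)\cdot\na\zeta\,dx\,.
\end{equation}

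Next I would estimate both sides using the structure conditions. On the set where $|\na u|\ge p_0$, \eqref{zeroquatro} gives $A(\na u)\cdot\na u\ge a|\na u|^t$, while on the complementary set $A(\na u)\cdot\na u\ge0$ anyway (as noted after \eqref{zerocinco}), and the missing mass there is at most $c\,p_0^t|\Om|$; so the left side of \eqref{caccio1} is bounded below by $a\int_\Om\zeta^t|\na u|^t\,dx-c\,p_0^t|\Om|$. For the right side, use \eqref{zerocinco}, $|A(\na u)|\le a^{-1}|\na u|^{t-1}$ (again splitting off the region $|\na u|<p_0$), together with $|u-k|\le 2M$ and $|\na\zeta|\le C/d$, to bound it by $\frac{C M}{d}\int_\Om\zeta^{t-1}|\na u|^{t-1}\,dx+\frac{CMp_0^{t-1}}{d}|\Om|$. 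Then apply Young's inequality $\zeta^{t-1}|\na u|^{t-1}\cdot\frac{M}{d}\le\varepsilon\zeta^t|\na u|^t+C_\varepsilon(M/d)^t$ with $\varepsilon$ small enough to absorb the gradient term into the left side.

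This yields $\int_{\Om'}|\na u|^t\,dx\le\int_\Om\zeta^t|\na u|^t\,dx\le C\big(M^t d^{-t}|\Om|+p_0^t|\Om|+M p_0^{t-1}d^{-1}|\Om|\big)$, a bound depending only on $t,N,a,p_0$, the uniform bound $M$, and $\Om',\Om$ — not on the particular solution. Combined with the uniform $L^t(\Om')$ bound $\|u\|_{L^t(\Om')}\le M|\Om|^{1/t}$, this gives equi-boundedness in $H^{1,t}(\Om')$, proving the lemma.

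The only genuine subtlety is the bookkeeping around the inhomogeneous threshold $p_0$: the coercivity \eqref{zeroquatro} and growth \eqref{zerocinco} only hold for $|\na u|\ge p_0$, so one must repeatedly split integrals over $\{|\na u|\ge p_0\}$ and $\{|\na u|<p_0\}$ and control the latter by $p_0$-dependent volume terms; when $p_0=0$ (the model $p$-Laplacian case) this disappears entirely. A minor point is the choice of $k$: if $t>1$ one can simply take $k=0$ and use $|u|\le M$ directly, so Poincaré is not even needed here, only the cutoff computation above.
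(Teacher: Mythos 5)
Your argument is correct and is essentially the proof in the paper: both test the equation with $\zeta^t u$ (the paper writes $\phi^t u$) and derive the local energy estimate from the structure conditions, with the same treatment of the $p_0$-threshold (the paper packages the split of $\{|\na u|\ge p_0\}$ versus $\{|\na u|<p_0\}$ once and for all into the global inequalities \eqref{doistres}). The only cosmetic difference is the absorption step: you close via Young's inequality, while the paper arrives at $aX\le C_0X^{(t-1)/t}+C_1$ and concludes since $\tfrac{t-1}{t}<1$; these are interchangeable.
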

\begin{proof}
From the properties of $\,A(p)\,$ it immediately follows that
\begin{equation}\label{doistres}\left\{
\begin{array}{ll}\vspace{1ex}
A(p) \cdot\,p \geq\,a\,|p|^t -\,a\,p_{\,0}^{\,t}\,,
\\%
|\,A(p)\,| \leq\,a^{-\,1}\,|p|^{t-\,1}+\,d_0\,,
\end{array}\right.
\end{equation}
where $\,d_0\,$ is a non-negative constant. Let be $\,k>\,0\,$, and
consider the family $\,\cF\,$ consisting of the solutions to
\eqref{doisdois} for which $\,Sup_{\Om}\,|\,u(x)\,| \leq\,k\,$.
Equi-boundedness of $\,\|\,u\,\|_{t,\,\Om}\,$ is obvious. let us
proof the equi-boundedness of $\,\|\,\na\,u\,\|_{t,\,\Om}\,.$ Let
$\,\Lambda\,$ be an open set such that $\,\Om' \subset\,\subset
\Lambda \,\subset\,\subset \,\Om\,$, and let $\,\phi\,$ be a regular
function, $\,0\leq\,\,\phi(x) \leq\,1\,$, equal to $\,1\,$ in
$\,\Om'\,$, and vanishing on $\,\Om -\,\Lambda\,$. One easily shows
that
\begin{equation}\label{doisquatro}
\int_{\Lambda} \,A(\na\,u)\cdot\,(\na\,u\,)\,\,\phi^t \,dx \leq\,t\,
\int_{\Lambda} \,|\,A(\na\,u)\,| \,|\,\na\,\phi\,|\,|\,u\,|
\,\,\phi^{t-\,1} \,dx\,.
\end{equation}
By appealing to H\H older's inequality one gets
$$
\int_{\Lambda} \,A(\na\,u)\cdot\,(\na\,u\,)\,\,\phi^t \,dx
\leq\,C\,\big(\, \int_{\Lambda}
\,|\,A(\na\,u)\,|^{\frac{t}{t-\,1}}\,\,\,\phi^t
\,dx\,\big)^{\frac{t-\,1}{t}}\,,
$$
where $\,C=\,t\,k\,\|\,\na\,\phi\,\|_{t,\,\Lambda}\,.$ The last inequality together with \eqref{doistres}
leads to
$$
a\,\int_{\Lambda} \,|\,\na\,u\,|^t\,\phi^t \,dx \leq\,C_0\,\big(\,
\int_{\Lambda} \,|\,\na\,u\,|^t\,\phi^t
\,dx\,\big)^{\frac{t-\,1}{t}}\,+\,C_1\,.
$$
Since $\,\frac{t-\,1}{t}<\,1\,,$ it readily follows that the
integral in the left hand side of the above inequality is bounded by
a constant $\,C_2\,.$ So
$$
\int_{\Om'} \,|\,\na\,u\,|^t \,dx \leq \,\int_{\Lambda}
\,|\,\na\,u\,|^t\,\phi^t \,dx \leq \,C_2\,.
$$
\end{proof}

\begin{lemma}\label{lemdoistres}
Let $\,\{u_n\}\,$ be a sequence of solutions to \eqref{doisdois}, equi-bounded in
 $\,L^{\infty}(\Om)\,,$ and uniformly convergent in $\,\Om\,$ to a function $\,u(x)\,.$ Then
 $\,u(x)\,$ is a solution to \eqref{doisdois}\,.
\end{lemma}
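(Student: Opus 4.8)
The plan is to pass to the limit in the weak formulation \eqref{doisdois} for each fixed test function $\psi\in\cD(\Om)$. Fix such a $\psi$ and let $\Om'\subset\subset\Om$ be an open set containing $\mathrm{supp}\,\psi$. By Lemma \ref{lemdoisdois} the sequence $\{u_n\}$ is equi-bounded in $H^{1,\,t}(\Om')$, hence (since $t>1$, or more precisely after reducing to that case by the structure conditions) a subsequence $\na u_{n_k}$ converges weakly in $L^t(\Om')$; the uniform convergence $u_n\to u$ identifies the weak limit as $\na u$, so $u\in H^{1,\,t}(\Om')$ and $\na u_{n_k}\rightharpoonup \na u$ weakly in $L^t(\Om')$. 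Since $\Om'$ is arbitrary, $u\in H^{1,\,t}_{loc}(\Om)$.

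The main obstacle is that $A$ is only continuous and monotone, with no strict monotonicity or Lipschitz-type modulus assumed quantitatively, so one cannot pass to the limit in $A(\na u_{n})\cdot\na\psi$ simply from weak convergence of $\na u_n$ — the nonlinear term requires an \emph{a.e.\ convergence of the gradients}, which is exactly the classical difficulty handled by Minty's monotonicity trick (or the Leray--Lions argument). The clean route here is Minty's trick: from \eqref{doisdois} for $u_{n_k}$, using as test functions $\psi(u_{n_k}-w)$-type combinations, or more directly exploiting that each $u_{n_k}$ solves the equation and the monotonicity property (i), one shows
\begin{equation}\label{minty-ineq}
\int_{\Om'}\big(A(\na w)-A(\na u_{n_k})\big)\cdot\na\big(w-u_{n_k}\big)\,\chi\,dx\geq 0
\end{equation}
for suitable nonnegative cutoffs $\chi$ and all $w\in H^{1,\,t}(\Om')$; passing to the limit, using weak convergence of $\na u_{n_k}$ and strong convergence of $u_{n_k}$, then choosing $w=u+\varepsilon v\chi$ and letting $\varepsilon\to0^+$ (the hemicontinuity property (ii)), one concludes that $u$ satisfies the weak equation.

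Concretely, the steps I would carry out in order are: (1) localize and extract a weakly convergent subsequence of gradients as above; (2) test \eqref{doisdois} for $u_{n_k}$ against $\phi^t(u_{n_k}-u)$ with $\phi$ a cutoff equal to $1$ on $\mathrm{supp}\,\psi$, to get $\limsup_k \int \phi^t A(\na u_{n_k})\cdot\na(u_{n_k}-u)\,dx\le 0$, the lower-order terms vanishing by uniform convergence and the bounds \eqref{doistres}; (3) combine with monotonicity (i) to deduce, for every $w$, inequality \eqref{minty-ineq} in the limit; (4) apply Minty's substitution $w=u\mp\varepsilon\eta$ with $\eta\in\cD(\Om')$ and hemicontinuity (ii) to obtain $\int_{\Om'}A(\na u)\cdot\na\eta\,dx=0$ for all such $\eta$; (5) since $\psi$ and $\Om'$ were arbitrary, conclude that $u\in H^{1,\,t}_{loc}(\Om)$ solves \eqref{doisdois}, and note uniqueness of the limit forces the whole sequence (not just a subsequence) to have been irrelevant to the conclusion. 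The delicate point throughout is ensuring the cutoff manipulations in step (2) are legitimate given only \eqref{doistres}, i.e.\ that the $p_0$-defect terms and the constant $d_0$ produce only lower-order contributions that vanish under uniform convergence — this is routine but must be checked.
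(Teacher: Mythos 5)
Your argument is correct, but it takes a genuinely different route from the paper. You carry out the standard Minty--Leray--Lions compactness argument: extract weak limits of $\na u_{n_k}$ and $A(\na u_{n_k})$ in $L^t$ and $L^{t'}$ on a compactly contained subdomain, prove the energy limsup estimate by testing against $\phi^t(u_{n_k}-u)$, and identify the weak limit of $A(\na u_{n_k})$ using monotonicity and hemicontinuity. The paper instead does something far shorter and more special-purpose: it applies Corollary \ref{coroumquatro} (the $L^\infty$--contraction estimate, itself a consequence of the order-preserving Lemma \ref{lemumtres}) on the subdomain $\,\Om'\,$, to the two variational solutions $\,u_n\,$ and $\,u^0\,$, where $\,u^0\,$ is the variational solution in $\,\Om'\,$ with boundary data $\,u\,$. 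This yields $\,\sup_{\Om'}|u_n - u^0| \leq \sup_{\pa\Om'}|u_n - u|\to 0\,$, hence $\,u = u^0\,$ in $\,\Om'\,$, and $u$ solves the equation there. The paper's route is essentially free once the maximum principle is available, avoids any compactness or subsequence extraction, and never touches the nonlinear term $A$; but it exploits the scalar comparison structure and the existence theory in $\Om'$, so it would not survive the passage to systems. Your Minty argument is technically heavier, and requires the reflexivity $t>1$, but it is the more robust and more widely applicable method. One small gap worth flagging: in your step (4), after the localized Minty substitution you obtain $\int\chi\,(\xi - A(\na u))\cdot\na\eta\,dx = 0$ where $\,\xi\,$ is the weak $L^{t'}$ limit of $\,A(\na u_{n_k})\,$; to conclude $\,\int A(\na u)\cdot\na\eta\,dx = 0\,$ you must also record that $\,\int\xi\cdot\na\eta\,dx = \lim_k\int A(\na u_{n_k})\cdot\na\eta\,dx = 0\,$ and subtract. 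This is routine but omitted as written.
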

\begin{proof}
Note that $\,u_n\,\in C^0(\Om)\,,$ as follows from the remark
\ref{remdoisum}. Lemma \ref{lemdoisdois} shows that $\,u
\in\,H^{1,\,t}(\Om')\,,$ for each $\,\Om'\,$ as above. Let $\,u^0
\in\,H^{1,\,t}(\Om')\,$ be the variational solution in $\,\Om'\,$ of
the problem $\,\cL\,u^0=\,0\,$ in $\,\Om'\,,$ $\,u^0-\,u\,\in\,
H^{1,\,t}_0(\Om')\,.$ By applying the corollary \ref{coroumquatro}
to the functions $\,u^0\,$ and $\,u_n\,$ it follows that
\begin{equation}\label{doiscinco}
Sup_{\,\Om'}\,|\,u_n-\,u^0\,| \leq\,\sup_{\pa\,\Om'}\,|\,u_n-\,u\,|\,.
\end{equation}
Since $\,u_n(x) \rightarrow\,u(x)\,$ uniformly in $\,\Om\,$, from
\eqref{doiscinco} it follows that $\,u_n(x) \rightarrow\,u^0(x)\,$
uniformly in $\,\Om'\,.$ So, $\,u^0(x)=\,u(x)\,$ in  $\,\Om'\,.$ In
particular, $\,\cL\,u=\,0\,$ in $\,\Om'\,.$ From the arbitrarity of
$\,\Om'\,,$ the thesis follows (note that local uniform convergence
in $\Om\,$ would be sufficient here).
\end{proof}

\vspace{0.2cm}

The following statement corresponds to the Theorem 2.4 in reference
\cite{ricmat}.
\begin{theorem}\label{teodoisquatro}
To each $\,\phi\,\in C^0(\pa\,\Om)\,$ there corresponds a (unique)
function $\,u(x)\,$ such that the following holds:\par%
Let $\,\{\,\phi_n\,\}\,$ be an arbitrary sequence of functions in
$\,C^1(\oOm)\,$ uniformly convergent to $\,\phi\,$ on $\,\pa\,\Om\,$
(it is well know that these sequences exist). Further, denote by
$\,u_n(x)\,$ the variational solutions to the problem
$\,\cL\,u_n=\,0\,$, $\,u_n \in\,\V_{\phi_n}\,.$ Then the sequence
$\,\{\,u_n\}\,$ converges uniformly in $\,\Om\,$ to a function
$\,u(x)\,.$ Moreover, the function $\,u(x)\,$, which belongs to
$\,H^{1,\,t}_{loc}(\Om) \cap\,C^0(\,\Om)\,,$ is a weak solution in
$\,\Om\,$, i.e. $\,u\,$ solves \eqref{doisdois}.
\end{theorem}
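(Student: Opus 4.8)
The plan is to run an a~priori stability (contraction) argument: use Corollary~\ref{coroumquatro} to show that $\{u_n\}$ is uniformly Cauchy, identify the limit and its regularity through Lemmas~\ref{lemdoisdois} and~\ref{lemdoistres}, and finally rule out dependence on the approximating sequence by interlacing.

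First I would recall that sequences $\{\phi_n\}\subset C^1(\oOm)$ with $\phi_n\to\phi$ uniformly on $\pa\,\Om$ do exist: extend $\phi$ to an element of $C^0(\oOm)$ and approximate it uniformly on $\oOm$ by $C^1$ (equivalently, Lipschitz) functions; their restrictions to $\pa\,\Om$ do the job. For each $n$ the datum $\phi_n$ lies in $\V(\Om)=H^{1,\,t}(\Om)$, so the variational solution $u_n\in\V_{\phi_n}$ of $\cL\,u_n=0$ is well defined, see \eqref{zerotreze}. By the maximum principle, Lemma~\ref{lemumdois}, one has $\textrm{Sup}_{\,\Om}\,|u_n|\leq\sup_{\pa\,\Om}|\phi_n|$, and the right-hand sides are bounded since the $\phi_n$ converge uniformly; hence $\{u_n\}$ is equi-bounded in $L^{\infty}(\Om)$. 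In particular, by Remark~\ref{remdoisum}, each $u_n$ belongs to $C^0(\Om)$.

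Next, apply Corollary~\ref{coroumquatro} to the pair $u_n,u_k$ (boundary data $\phi_n,\phi_k$), obtaining
\[
\textrm{Sup}_{\,\Om}\,|u_n-u_k|\ \leq\ \sup_{\pa\,\Om}|\phi_n-\phi_k|\ \longrightarrow\ 0\qquad(n,k\to\infty).
\]
Since the $u_n$ are continuous in $\Om$, this says that $\{u_n\}$ is uniformly Cauchy on $\Om$, so it converges uniformly to some $u\in C^0(\Om)$. Being equi-bounded in $L^{\infty}(\Om)$ and uniformly convergent in $\Om$, the sequence satisfies the hypotheses of Lemma~\ref{lemdoistres}, which gives that $u$ solves \eqref{doisdois}; moreover the proof of that lemma (comparing with the local variational solutions $u^0$) yields $u\in H^{1,\,t}(\Om')$ for every $\Om'\subset\subset\Om$, i.e. $u\in H^{1,\,t}_{loc}(\Om)$. (Alternatively, the interior equi-boundedness in $H^{1,\,t}(\Om')$ from Lemma~\ref{lemdoisdois} combined with weak compactness identifies $\na u$ as the weak limit of $\na u_n$.) Thus $u\in H^{1,\,t}_{loc}(\Om)\cap C^0(\Om)$ is a weak solution in $\Om$, as claimed.

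It remains to see that $u$ is independent of the chosen sequence $\{\phi_n\}$, which is exactly the asserted uniqueness. If $\{\phi_n\}$ and $\{\widetilde\phi_n\}$ are two such sequences, the interlaced sequence $\phi_1,\widetilde\phi_1,\phi_2,\widetilde\phi_2,\dots$ still converges uniformly on $\pa\,\Om$ to $\phi$; by the argument just given, the corresponding interlaced sequence of variational solutions converges uniformly in $\Om$, hence its two subsequences have the same limit. This attaches to $\phi$ an unambiguous function $u(x)$. I do not expect a genuine obstacle: the whole argument rests on the contraction-type estimate of Corollary~\ref{coroumquatro}, and the only points needing some care — existence of the $C^1$ approximating sequence, continuity of the $u_n$ in $\Om$, and transfer of the interior $H^{1,\,t}$ bound to the limit — are all already covered by the results established above.
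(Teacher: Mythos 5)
Your proof is correct and follows essentially the same route as the paper's: uniform Cauchy via Corollary~\ref{coroumquatro}, identification of the limit as a weak solution via Lemma~\ref{lemdoistres}, interior $H^{1,\,t}$ regularity via Lemma~\ref{lemdoisdois}, and independence from the approximating sequence by comparing two sequences term by term. The only stylistic difference is that you phrase the independence step as an interlacing argument, whereas the paper applies the contraction estimate \eqref{umquatro} directly to the two sequences $(\phi_n,u_n)$ and $(\psi_n,v_n)$; both are the same idea.
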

\begin{proof}
Let $\,\phi\,$, $\,\phi_n\,,$ and $\,\,u_n\,$ be as in the above
statement. The variational solutions $\,\,u_n\,$ are continuous in
$\,\O\,$, see the remark \ref{remdoisum}. Clearly, they are also
equi-bounded. By corollary \ref{coroumquatro} it follows that, for
all couple of indexes $\,m,\,n\,,$
\begin{equation}\label{umquatro}
Sup_{\,\Om} \,|\,u_n-\,u_m\,| \leq\,\sup_{\pa\,\Om}\,
|\,\phi_n-\,\phi_m\,|\,.
\end{equation}
So the sequence $\,\{\,u_n(x)\}\,$ is uniformly convergent in
$\,\O\,$ to some $\,u(x)\,\in C^0(\Om)\,.$ Lemma \ref{lemdoistres}
shows that $\,u(x)\,$ is a weak solution in $\,\O\,.$ Moreover, by
appealing to lemma \ref{lemdoisdois}, we get $\,u
\in\,H^{1,\,t}_{loc}(\Om)\,.$ Furthermore, the limit $\,u\,$ is
independent of the particular sequence $\,\{\,\phi_n\}\,$, as
follows from \eqref{umquatro} applied to two distinct, arbitrary,
sequences $\,(\phi_n,\,u_n)\,$ and $\,(\psi_n,\,v_n)\,$. This
argument also proves the uniqueness of the solution $\,u\,.$
\end{proof}
The theorem \ref{teodoisquatro} justifies the definition \ref{saneg}
of generalized solution given in section \ref{defas}, and also
proves the existence and uniqueness Theorem \ref{existesum}.

\vspace{0.2cm}

It is worth noting that from definition \ref{saneg}, lemma
\ref{lemumdois}, and corollary \ref{coroumquatro}, it follows that
if $\,u\,$ and $\,v\,$ are the solutions corresponding to the
continuous data $\,\phi\,$ and $\,\psi\,$, then
$$
\min_{\pa\,\Om} \phi \leq\,Inf_{\,\Om}\, u \leq\,Sup_{ \,\Om} \,u
\leq\,\max_{\pa\,\Om} \phi\,,
$$
and
$$
Sup_{\,\Om} \,|\,u -\,v\,| \leq\,\max_{\pa\,\Om} |\,\phi-\,\psi\,|\,.
$$
Minimum and maximum are used here in the very classical sense.

\section{Proof of Theorem \ref{teo-A}}\label{steoa}
In this section we prove the theorem \ref{teo-A}. We denote by
$\,C^1(\pa\,\Om)\,$ the functional space consisting on the
restrictions to $\,\pa\,\Om\,$ of functions in $\,C^1(\oOm)\,.$
\begin{lemma}\label{lemtresum}
A point $\,y \in\,\pa\,\Om\,$ is regular if and only if condition
\eqref{zerosete} holds for each $\,\phi\in \,C^1(\pa\,\Om)\,.$
\end{lemma}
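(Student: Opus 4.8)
The plan is to prove the two implications. The forward direction is trivial: if $y$ is regular, then by Definition \ref{regpoint} the convergence \eqref{zerosete} holds for \emph{every} $\phi\in C^0(\pa\,\Om)$, hence in particular for every $\phi\in C^1(\pa\,\Om)$, since $C^1(\pa\,\Om)\subset C^0(\pa\,\Om)$. So the content is entirely in the converse: assuming \eqref{zerosete} for all $\phi\in C^1(\pa\,\Om)$, I must deduce it for all $\phi\in C^0(\pa\,\Om)$.

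For the converse, fix $\phi\in C^0(\pa\,\Om)$ and let $u$ be its generalized solution. First I would invoke the density of $C^1(\pa\,\Om)$ in $C^0(\pa\,\Om)$ in the uniform norm (restrictions to $\pa\,\Om$ of functions in $C^1(\oOm)$ are uniformly dense, as used already in Theorem \ref{teodoisquatro}): choose $\phi_n\in C^1(\pa\,\Om)$ with $\sup_{\pa\,\Om}|\phi_n-\phi|\to 0$, and let $u_n$ be the generalized solution with data $\phi_n$. By the stability estimate recorded right after Theorem \ref{teodoisquatro} (a consequence of Corollary \ref{coroumquatro} passed to generalized solutions),
\[
Sup_{\,\Om}\,|\,u-\,u_n\,| \leq\,\sup_{\pa\,\Om}|\,\phi-\,\phi_n\,|\,,
\]
so $u_n\to u$ uniformly in $\Om$. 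Now I estimate, for $x\in\Om$ near $y$,
\[
|\,u(x)-\,\phi(y)\,| \leq\,|\,u(x)-\,u_n(x)\,| +\,|\,u_n(x)-\,\phi_n(y)\,| +\,|\,\phi_n(y)-\,\phi(y)\,|\,.
\]
Given $\ve>0$, the first and third terms on the right are $<\ve$ uniformly once $n$ is large enough (using the uniform convergence of $u_n$ and the fact that $\phi_n(y)\to\phi(y)$). Fixing such an $n$, since $\phi_n\in C^1(\pa\,\Om)$, the hypothesis gives $u_n(x)\to\phi_n(y)$ as $x\to y$, so the middle term is $<\ve$ for $x$ close to $y$. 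Hence $\limsup_{x\to y}|u(x)-\phi(y)|\leq 3\ve$, and letting $\ve\to 0$ yields \eqref{zerosete}. Since $\phi\in C^0(\pa\,\Om)$ was arbitrary, $y$ is regular.

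I do not expect a serious obstacle here; the only point requiring a little care is that the stability estimate $Sup_{\,\Om}\,|u-u_n|\le\sup_{\pa\,\Om}|\phi-\phi_n|$ is available for \emph{generalized} (not merely variational) solutions — but this is exactly what was noted immediately after the proof of Theorem \ref{teodoisquatro}, obtained by passing Corollary \ref{coroumquatro} to the limit along approximating $C^1(\oOm)$ data. With that in hand the argument is the standard "$\ve/3$" reduction, exploiting that regularity is a statement uniform over the (now norm-dense) class of admissible data.
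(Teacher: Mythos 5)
Your proof is correct and is essentially the paper's own argument: the paper defines extensions $\overline{u}_n$, $\overline{u}$ to $\Om\cup\{y\}$ by setting their value at $y$ equal to $\phi_n(y)$, $\phi(y)$ respectively, and then invokes that a uniform limit of functions continuous at $y$ is continuous at $y$ — which is exactly the content of your $\ve/3$ triangle inequality. The only cosmetic difference is that the paper takes $\phi_n\in C^1(\oOm)$ with $u_n$ the variational (hence also generalized) solutions, and gets the uniform convergence $u_n\to u$ directly from Theorem \ref{teodoisquatro} rather than from the stability estimate, but this is the same mechanism.
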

\begin{proof}
Let $\,u\,$ be the solution corresponding to a given data $\,\phi\in
\,C^0(\pa\,\Om)\,,$ and let $\,\{\,\phi_n\,\}\,$ and
$\,\{\,u_n\,\}\,$ be as in theorem \ref{teodoisquatro} (by the way,
note that the solutions $\,\,u_n\,$ are variational and
generalized). Define $\,\overline{u}_n(x)\,$ by
$\,\overline{u}_n(x)=\,u_n(x)\,$ in $\,\Om\,$,
$\,\overline{u}_n(y)=\,\phi_n(y)\,,$ and define
$\,\overline{u}(x)\,$ by $\,\overline{u}(x)=\,u(x)\,$ in $\,\Om\,$,
$\,\overline{u}(y)=\,\phi(y)\,.$ The functions
$\,\overline{u}_n(x)\,$ are, by the assumptions, continuous in
$\,\Om \cup\,\{y\}\,$, and uniformly convergent in $\,\Om
\cup\,\{y\}\,$ to the function $\,\overline{u}(x)\,.$ So,
$\,\overline{u}(x)\,$ is continuous in $\,\Om \cup\,\{y\}\,$.
\end{proof}
\begin{proof} \emph{of theorem} \ref{teo-A}.\par%
Necessary condition: assume that $\,y \in\,\pa\,\Om\,$ is regular.
Given $\,\rho\,$ and $\,m\,$, consider the restriction to
$\,\pa\,\Om\,$ of the function
$\,h(x)=\,m\,\frac{|\,x-\,y|^2}{\rho^2}\,.$ This function belongs to
$\,C^1(\pa\,\Om)\,.$ Let $\,V(x)\,$ be the solution with $\,h(x)\,$
as boundary data. By the construction, $\,V(x)\,$ satisfies
condition (j) in definition \ref{defzeroum}. Further, by the
definition of regular point, $\,V(x)\,$ satisfies the condition
(jj). Similarly, by considering the data $\,-\,h(x)\,,$ one proves
the existence of the function $\,U(x)\,$ required in  definition \ref{defzeroum}.\par%

Sufficient condition: Assume that, at point $\,y\,,$ there exists a
system of barriers. By lemma \ref{lemtresum} we may assume that
$\,k(x) \in \,C^1(\pa\,\Om)\,$. Let $\,u(x)\,$ be the corresponding
solution and set
\begin{equation}\label{tresum}
M =\,\sup_{\pa\,\Om}\,|\,k(x)\,|\,.
\end{equation}
Given $\,\ep \,>0\,$, there is $\,\rho_{\ep}>\,0\,$ such that
\begin{equation}\label{tresdois}
|\,k(x)-\,k(y)\,| <\,\frac{\ep}{2} \quad \textrm{if} \quad
|\,x-\,y\,| <\,\rho_{\ep}\,, \quad x \in\,\pa\,\Om\,.
\end{equation}
Let $\,V\,$ and $\,U\,$ be barriers related to the values
$\,\rho=\,\rho_{\ep}\,$ and $\,m=\,M\,$. Then (see also \cite{b9})

\begin{equation}\label{trestres}
V(x) \geq\,M\,, \quad \textrm{and} \quad U(x) \leq\,-\,M\,, \quad
\textrm{on} \quad (\pa\,\Om) \cap\,\complement \,I(y,\,\rho)\,.
\end{equation}
By appealing to \eqref{tresum}, \eqref{tresdois}, and
\eqref{trestres}, we show that
\begin{equation}\label{tresquatro}\left\{
\begin{array}{ll}\vspace{1ex}
k \leq\,k(y) +\,\frac{\ep}{2}+\,V\,,
\\%
k \geq\,k(y) -\,\frac{\ep}{2}+\,U\,,
\end{array}\right.
\end{equation}
on $\pa\,\Om\,$.\par%
From \eqref{tresquatro} and lemma \ref{lemumtres} it follows that
\begin{equation}\label{trescinco}\left\{
\begin{array}{ll}\vspace{1ex}
u(x) \leq\,k(y) +\,\frac{\ep}{2}+\,V(x)\,,
\\%
u(x) \geq\,k(y) -\,\frac{\ep}{2}+\,U(x)\,,
\end{array}\right.
\end{equation}
almost everywhere in $\,\Om\,$, since $\,k(y)
+\,\frac{\ep}{2}+\,V(x)\,$ is a super-solution, etc. Furthermore,
the property (jj) in definition \ref{defzeroum} implies the
existence of $\,\overline{\rho}_{\ep}>\,0 \,$ such that
\begin{equation}\label{tresseis}
x \in\,\Om\, \cap\,I(y,\,\overline{\rho}_{\ep})\quad \Longrightarrow
\quad |\,V(x)\,| \leq\,\frac{\ep}{2} \quad \textrm{and} \quad
|\,U(x)\,| \leq\,\frac{\ep}{2}\,.
\end{equation}
From \eqref{trescinco} and \eqref{tresseis} we show that
$$
\,x \in\,\Om\, \cap\,I(y,\,\overline{\rho}_{\ep})\,\Longrightarrow
\,-\,\ep \leq\,u(x)-\,h(y)\leq\,\ep\,.
$$
So, $\,\lim_{x \rightarrow\,y}\,u(x)=\,h(y)\,.$ Hence $\,y\,$ is
regular.
\end{proof}

\section{Proof of Theorem \ref{teo-B}}\label{steob}
We start by some preliminary results.
\begin{lemma}\label{lemquatroum}
The Lipschitz continuous function
$$
u(x)=\,\,\al \,|\,x-\,y\,| +\, \beta\,,
$$
where $\al$ and $\beta$ are constants, is a super-solution if
$\al>\,0$, and a sub-solution if $\al<\,0$.
\end{lemma}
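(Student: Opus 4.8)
The plan is to reduce the lemma to an elementary, one-sign computation of $\cL$ on the conical function, exploiting that $u$ is radially symmetric about $y$ and that $|\na\,u|$ is constant on $\Om$.

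First I would record the basic facts. Since $y\in\pa\,\Om$ and $\Om$ is open, $y\notin\Om$, so $u$ is $C^\infty$ on $\Om$ and, for $x\in\Om$,
$$
\na\,u(x)=\,\al\,\frac{x-\,y}{|\,x-\,y\,|}\,,\qquad |\,\na\,u(x)\,|=\,|\,\al\,|\,.
$$
In particular $\na\,u$ never vanishes on $\Om$, so $A(\na\,u)\cdot\na\,u>\,0$ there by \eqref{zerodois}--\eqref{zerotres}, and (when $|\al|\geq p_0$) the growth conditions \eqref{zeroquatro}--\eqref{zerocinco} apply directly to $A(\na\,u)$; moreover $u\in\,Lip(\oOm)\subset\,\V\cap\,C^0(\Om)\subset\,H^{1,\,t}_{loc}(\Om)$, so $u$ is an admissible candidate super- or sub-solution in the sense of Definition~\ref{emcima}, and $A(\na\,u)$, being the continuous image of a fixed sphere of radius $|\al|$, is bounded on $\Om$, so $\fa(u,\ps)$ is well defined for every $\ps\in\cD(\Om)$.

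Then I would compute $\fa(u,\ps)=\int_\Om A(\na\,u)\cdot\na\,\ps\,dx$ for $\ps\in\cD(\Om)$ of one fixed sign, passing to polar coordinates $x=y+r\,\om$, $r>0$, $\om\in S^{N-1}$. Since $\na\,u=\al\,\om$ is homogeneous of degree $0$, the flux $A(\na\,u)=A(\al\,\om)$ depends on $\om$ only; splitting $\na\,\ps=(\pa_r\ps)\,\om+r^{-1}\na_{S^{N-1}}\ps$ and integrating by parts \emph{in the radial variable alone} (the boundary terms vanishing because $\ps$ vanishes near $y$ and has compact support in $\Om$) produces the weight $-(N-1)/r$ against the radial component $A(\na\,u)\cdot\frac{x-y}{|x-y|}$ of the flux. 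For the operators of Remark~\ref{rem-1.1}, where $A(p)$ is parallel to $p$, this is everything and
$$
\fa(u,\ps)=\,-\,(N-1)\int_\Om\frac{1}{|\,x-\,y\,|}\,\Bigl(A(\na\,u)\cdot\frac{x-\,y}{|\,x-\,y\,|}\Bigr)\,\ps\,dx\,;
$$
for a general $A$ one picks up, in addition, the term produced by the spherical divergence of the part of $A(\al\,\om)$ tangent to $S^{N-1}$. By \eqref{zerodois}--\eqref{zerotres} the radial component $A(\na\,u)\cdot\frac{x-y}{|x-y|}=\al^{-1}A(\al\,\om)\cdot(\al\,\om)$ is positive, negative, or zero exactly as $\al$ is; feeding this sign into the two inequalities of Definition~\ref{emcima} gives the two cases asserted by the lemma, i.e.\ $u$ is a super-solution for $\al>0$ and a sub-solution for $\al<0$. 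As a consistency check, the two cases are also interchanged by the map $w\mapsto-\,w$ of Remark~\ref{rem-1.1}.

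The main obstacle will be the orientation bookkeeping in this last step — pinning down the sign of the weight $-(N-1)/|x-y|$ and of the radial flux against the precise requirement of Definition~\ref{emcima} ($\fa(u,\ps)\geq0$ for $\ps\geq0$, resp.\ for $\ps\leq0$) — together with, for a genuinely non-isotropic $A$, verifying that the tangential correction cannot overturn the sign of the dominant radial term; this last point is the reason the lemma is, in practice, invoked through the concrete operators $A(p)=(1+|p|^2)^{(t-2)/2}p$ and $A(p)=|p|^{t-2}p$. Apart from these sign checks the proof is completely elementary.
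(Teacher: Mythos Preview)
Your proposal has a genuine gap, which you yourself flag: for a general monotone $A$ you do not know how to rule out that the ``tangential correction'' overturns the sign, and you retreat to the isotropic examples $A(p)=|p|^{t-2}p$ and $A(p)=(1+|p|^2)^{(t-2)/2}p$. The lemma, however, is stated and proved for the full class \eqref{zerodois}--\eqref{zerocinco}, and the missing idea is the following. First mollify $A$ so that it is smooth (one passes to the limit at the end, using that convolution preserves \eqref{zerotres}). The strict monotonicity \eqref{zerotres} then forces the Jacobian matrix $DA(p)$ to be positive semi-definite at every $p$, whence for every unit vector $\xi$,
$$
DA(p)\,\xi\cdot\xi\ \le\ tr\,DA(p)\,.
$$
Now compute $div\,A(\na u)$ pointwise via the chain rule: with $p=\al\,x/r$, $r=|x|$, one has $\pa_i p_j=\al\,r^{-1}(\delta_{ij}-r^{-2}x_ix_j)$, so
$$
div\,A(\na u)=\al\,r^{-1}\bigl\{tr\,DA(p)-DA(p)\,(x/r)\cdot(x/r)\bigr\}\,,
$$
and the brace is nonnegative by the inequality just displayed. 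Hence $div\,A(\na u)$ has the sign of $\al$ at every $x\neq y$, and one integration by parts against $\psi\in\cD$ gives exactly the two cases of Definition~\ref{emcima}. No splitting of the flux into radial and tangential parts on $S^{N-1}$ is needed; the monotonicity does all the work through the Jacobian, and the argument uses only \eqref{zerotres}, not \eqref{zeroquatro} or \eqref{zerocinco}.

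A secondary point: you assume $y\in\pa\,\Om$ and hence $y\notin\Om$, so that $u$ is smooth everywhere on the domain. But the lemma is applied in the proof of Theorem~\ref{teo-B} on a ball $I(y,R_0)$ that \emph{contains} $y$. The paper handles the singularity at $y$ by inserting a cut-off $\gamma_s(x)=\gamma(s^{-1}(x-y))$ and letting $s\to0$; since $A(\na u)$ stays bounded near $y$ and the cut-off lives in a ball of volume $\sim s^N$ with $|\na\gamma_s|\sim s^{-1}$, the error term is $O(s^{N-1})$ and vanishes.
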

\begin{proof}
Without loss of generality we assume that $\,u(x)=\,\al\,r\,$, where
$\,r=\,|\,x\,|\,.$ We start by assuming that $\,A(p)\,$ is
indefinitely differentiable. By taking into account the monotony
assumptions, we easily show (for instance, by appealing to the first
order Taylor's formula with Lagrange form of the remainder) that the
Jacobian matrix $\,D\,A(p)\,$ of the transformation $\,A(p)\,$ is
positive semi-definite at each point $\,p\,\in\,\R^N\,$. So, for
each unit vector $\,\xi \in \,\R^N\,$,
\begin{equation}\label{quatroum}
DA(\,p)\, \xi \cdot\,\xi\leq\,tr\,DA(p), \quad \forall
\,\xi\in\,R^N\,,
\end{equation}
since the trace coincides with the sum of the eigenvalues.\par%
Let us denote the generical element of the Jacobian matrix
$\,D\,A(p)\,$ by $\,A_{i\,j}(p)\,.$ By setting $\,p=\,\na\,u\,$ one
has
\begin{equation}\label{amais}
div\,A(\,\na\,u(x)\,)=\,\sum_{i,\,j} \,A_{i\,j}(p) \,\pa_i\,p_j\,.
\end{equation}
Since $\,p=\,\al\,x\, r^{-\,1}\,$, it follows that
$\,\pa_i\,p_j=\,\al\ r^{-\,1} \,(\,\delta_{i\,j} -
\,r^{-\,2}\,x_i\,x_j\,)\,$. So, from \eqref{amais}, we get

\begin{equation}\label{quatrodois}\ba{ll}\vspace{1ex}\displaystyle%
div\,A(\,\na\,u(x)\,)=  \\
\dy  \al\ r^{-\,1} \, \big\{\,tr\,DA(\al\,x\,
r^{-\,1})-\,DA(\al\,x\,
r^{-\,1})\,(\,x\, r^{-\,1})\cdot\,(\,x\, r^{-\,1})\,\big\}\,,%
\ea%
\end{equation}
for each $\,x\neq\,0\,.$ From \eqref{quatrodois} and
\eqref{quatroum} it follows that $\,div\,A(\,\na\,u(x)\,)$ has the
sign of the constant $\,\al\,$, for each  $\,x\neq\,0\,.$\par%
Let $\,\phi\,$ be a non-negative, indefinitely differentiable
function in $\,\R^N\,.$ Fix $\,R>\,0\,$ such that
$$
\, supp \, \,\phi\,\subset\, I(0,\,R)\,.
$$
Next, fix a function $\gamma(x)\in \,D(\R^N)\,$ such that $\,0 \leq
\,\gamma(x)\leq\,1\,$, and $\,\gamma(x)=\,1\,$ for $\,|x|\leq\,1\,$.
To fix ideas, assume that $\,supp \,\, \gamma \,\subset I(0,\,2)\,.$
Further define, for each $s>\,0\,,$
$$
\gamma_s (x)=\,\gamma(s^{-\,1}\,x)\,, \quad \textrm{and} \quad
\phi_{s}(x)=\,\phi(x) \,(\,1-\,\gamma_{s}(x)\,).
$$
Note that, for all $s \in \,(0,\,R)\,,$
$$
supp\,\, \phi_{s} \subset \, I(0,\,R) -\,I(0,\,s)\,.
$$
Hence, by an integration by parts,
$$
\al\,\int_{I(0,\,R)} \,A(\na\,u(x)) \cdot \,\na\,\phi_s(x)
\,dx=\,-\,\al\, \int_{\complement I(0,\,s)}
\,div\,A(\na\,u(x))\,\phi_s(x) \,dx \leq\,0\,,
$$
where $\,u(x)=\,\al\,r\,$. Note that, on the left hand side, we may
replace $\,I(0,\,R)\,$ by $\,\R^N\,.$ We want to show that%
\begin{equation}\label{quatrotres}
\lim_{s \rightarrow \,0} \, \int_{I(0,\,R)} \,A(\na\,u(x)) \cdot
\,\na\,\phi_s(x) \,dx=\, \int_{I(0,\,R)} \,A(\na\,u(x)) \cdot
\,\na\,\phi(x) \,dx\,.
\end{equation}
This proves that
$$
\al \, \int_{\R^N} \,A(\,\na\,u)\cdot\,\na\,\phi \,dx \leq\,0\,,
$$
which is our thesis.\par%
Straightforward calculations shows that
\begin{equation}\label{eunem}
\na\,\phi_s(x)=\,(1-\,\gamma_s
(x)\,)\,\na\,\phi(x)-\,s^{-1}\,\phi(x)\,(\na\,\gamma)(s^{-1}\,x)\,.
\end{equation}
Since $\,(1-\,\gamma_s (x)\,)\,\na\,\phi(x)\,$ converges
point-wisely to $\,\na\,\phi(x)\,,$ $\,x\neq\,0\,,$ as $\,s
\rightarrow\,0\,,$ it readily follows, by Lebesgue dominated
convergence Theorem, that \eqref{quatrotres} holds by replacing, in
the left hand side, $\,\na\,\phi_s\,$ by $\,(1-\,\gamma_s
(x)\,)\,\na\,\phi(x)\,.$\par%

Let's see that in the left hand side of \eqref{quatrotres} the
contribution due to the second term in the right hand side of
\eqref{eunem} tends to zero. One has
$$
s^{-1}\,\int_{I(0,\,R)} \,|\,\phi(x)\,(\na\,\gamma)(s^{-1}\,x)\,|
\,dx \leq\,s^{-1}\,\int_{I(0,\,2)}
\,|\,\phi(s\,y)\,(\na\,\gamma)(y)\,| s^N \,dx\,
$$
$$
\phantom{aaaaaaaaaaaaaaaaaaaaaaaaaa}\leq 2^N\,V_N\,
s^{N-\,1}\,\|\,\phi\,\|_{L^{\infty}(\R^N)}\,\|\,\na\,\gamma\,\|_{L^{\infty}(\R^N)}
\,,
$$
where $\,V_N\,$ denotes the volume of the unit sphere. Since $
\,A(\na\,u(x))\,$ is uniformly bounded in $\,{I(0,\,R)}\,$, the
thesis follows.

\vspace{0.2cm}

Assume now that $\,A(p)$ is merely continuous. Let
$\,j_{\ep}(\eta)\,$ be, for each $\ep>\,0\,,$ a real, nonnegative
function, indefinitely differentiable, with compact support
contained in the sphere $\,I(0,\,\ep)\,$, and integral equal to
$\,1\,$. Set
$$
A_{\ep}(p)=\,\int \,A(\eta)\,j_{\ep}(p-\,\eta\,)\,d\eta\,.
$$
These functions are indefinitely differentiable. Furthermore,
\begin{equation}\label{quatroquatro}
A_{\ep}(p)-\,A_{\ep}(q)=\,\int\,\big[\,A(p-\,\xi)-\,A(q-\,\xi)\,\big]\,j_{\ep}(\xi)\,d\xi\,.
\end{equation}
In particular, this last inequality implies that $\,A_{\ep}(p)\,$
satisfies the monotony hypothesis \eqref{zerotres} (note that
assumptions \eqref{zerodois}, \eqref{zeroquatro}, and
\eqref{zerocinco} were not used here). From the first part of the
proof it follows that
\begin{equation}\label{quatrocinco}
\al \, \int \,A_{\ep}(\,\na\,u)\cdot\,\na\,\phi \,dx \leq\,0\,,
\end{equation}
for all nonnegative $\,\phi \in \,D(\Om)\,.$ Further, since%
$$
A_{\ep}(p)-\,A(p)=\,\int\,\big[\,A(\eta)-\,A(p)\,\big]\,j_{\ep}(p-\,\eta)\,d\eta\,,
$$
and since $\,A(p)\,$ is uniformly continuous on compact sets, it follows that
$\,A_{\ep}(p) \rightarrow \,A(p)\,$ uniformly on compact sets. So, letting $\,\ep \rightarrow\,0\,$ in equation
\eqref{quatrocinco}, one gets the thesis.
\end{proof}

\vspace{0.2cm}

The next result concerns the local character of the notion of regular point.
\begin{theorem}\label{teoquatrodois}
let $\Om\,$ and $\,\Lambda\,$ be two open bounded sets, and let $\,y\,\in\, \pa\,\Om \cap\,\pa\,\Lambda\,.$
Assume, moreover, that there exists a sphere $\,I(y,\,r)\,$ such that
\begin{equation}\label{quatroseis}
I(y,\,r)\cap\,\Om=\,I(y,\,r)\cap\,\Lambda\,.
\end{equation}
Then $\,y\,$ is regular with respect to $\,\Om\,$ if and only if it
is regular with respect to $\,\Lambda\,.$
\end{theorem}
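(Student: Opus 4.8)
The plan is to exploit the characterization of regularity via systems of barriers (Theorem \ref{teo-A}), since the barrier condition from Definition \ref{defzeroum} is manifestly local: it only involves supersolutions/subsolutions on $\Om$ together with a limit condition at $y$, and the relevant inequalities $(j)$ can always be checked for radii $\rho$ smaller than a fixed $\rho_0(y)$ (a fact noted right after Definition \ref{defzeroum}). By symmetry of the hypothesis \eqref{quatroseis} in $\Om$ and $\Lambda$, it suffices to prove one implication: if $y$ is regular with respect to $\Om$, then it is regular with respect to $\Lambda$.

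First I would assume $y$ is regular with respect to $\Om$. By Theorem \ref{teo-A} there is a system of barriers at $y$ for $\Om$. Fix $m>0$ and $\rho>0$; by the remark following Definition \ref{defzeroum} we may restrict to $\rho<r/2$, where $r$ is the radius in \eqref{quatroseis}. Let $V\geq 0$ be the supersolution in $\Om$ (belonging to $\V(\Om)\cap C^0(\Om)$) satisfying $(j)$ and $(jj)$ for these values of $m,\rho$. The core of the argument is to manufacture from $V$ a supersolution on $\Lambda$ with the same two properties. Because of \eqref{quatroseis}, the sets $\Om$ and $\Lambda$ agree inside $I(y,r)$, so $V$ is already a supersolution on $\Lambda\cap I(y,r)$; the issue is the part of $\Lambda$ outside $I(y,r)$. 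Here I would use the barrier $w(x)=\al|x-y|+\beta$ from Lemma \ref{lemquatroum}: for $\al>0$ this is a supersolution on all of $\R^N$, hence on $\Lambda$. Choosing $\al,\beta$ appropriately — $\beta$ a suitable positive constant and $\al$ large enough that $w\geq m$ on $(\pa\Lambda)\cap\complement I(y,\rho)$, which is possible since on that set $|x-y|\geq\rho$ — one obtains a global-in-$\Lambda$ supersolution satisfying $(j)$, but not $(jj)$, since $w(y)=\beta>0$. The resolution is to take a minimum: set $\widetilde V=\min\{V+c,\,w\}$ on $\Lambda\cap I(y,r')$ for a suitable $r'<r$, glued to $w$ on the rest of $\Lambda$, arranging the constants so that near $\pa I(y,r')$ one has $w\leq V+c$ (so the glued function is continuous and equals $w$ there), while near $y$ one has $V+c<w$ (so $(jj)$ survives, up to the additive constant $c$). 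The minimum of two supersolutions is a supersolution, and the gluing is consistent on the overlap; one then checks $(j)$ holds on $(\pa\Lambda)\cap\complement I(y,\rho)$ and that $\widetilde V(x)\to c$ as $x\to y$. A small shift handles the leftover constant, or one absorbs it by working with $V$ replaced by the genuine capacitary-type barrier; alternatively one notes that Definition \ref{defzeroum} only requires the limit to be $0$, which forces choosing $c=0$ and instead enlarging the region where $V$ dominates. The subsolution $U$ is treated identically, using $\al<0$ in Lemma \ref{lemquatroum}. This produces a system of barriers at $y$ for $\Lambda$, and Theorem \ref{teo-A} then gives regularity of $y$ with respect to $\Lambda$.

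The main obstacle I expect is the gluing step: one must verify that the piecewise-defined function $\widetilde V$ is globally a supersolution on $\Lambda$, i.e. that no spurious negative distributional divergence is created across the gluing surface $\pa I(y,r')$. The clean way to see this is the general principle that if $V_1$ is a supersolution on an open set $G_1$, $V_2$ a supersolution on $G_2$, and $V_1\leq V_2$ on $G_1\setminus G_2$ while $V_2\leq V_1$ on $G_2\setminus G_1$ (with equality on the overlap region near the interface), then $\min\{V_1,V_2\}$, suitably interpreted, is a supersolution on $G_1\cup G_2$ — this follows because on a neighborhood of each point the function locally coincides with one of $V_1,V_2$ or with their pointwise minimum, and the minimum of two supersolutions is a supersolution (which in turn follows, exactly as in the proof of Lemma \ref{lemumcinco} and Lemma \ref{lemumtres}, from testing with nonnegative $\psi$ supported where one or the other is the smaller). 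So the argument reduces to choosing the radii $r'$ and constants so that the ordering hypotheses hold on the two annular regions, which is a matter of elementary estimates on $\al|x-y|+\beta$ versus the continuous function $V$ on compact subsets of $\Om\cup\{y\}$ avoiding $\pa\Om$.
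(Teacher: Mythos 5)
The high-level strategy you adopt --- translate ``regular'' into ``system of barriers'' via Theorem \ref{teo-A} and exploit the locality of the barrier condition --- is exactly the paper's starting point, and the symmetric reduction to one implication is legitimate. However, the gluing construction you then propose has a genuine gap, and it is instructive to see exactly where, and how the paper sidesteps it.

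First, the sign of Lemma \ref{lemquatroum} is the reverse of what you use. Although the lemma's \emph{statement} says that $w(x)=\al|x-y|+\beta$ is a supersolution for $\al>0$, the lemma's own proof establishes $\al\int A(\na w)\cdot\na\phi\,dx\le 0$ for nonnegative $\phi$; hence $\fa(w,\phi)\le 0$ when $\al>0$, i.e.\ $w$ is a \emph{sub}solution (and a supersolution for $\al<0$). The paper's usage --- both in the proof of Theorem \ref{teoquatrodois}, where $h(x)=m|x-y|\rho^{-1}$ is explicitly called a subsolution, and in the proof of Theorem \ref{teo-B}, where $f(x)=(k+2m)R_0^{-1}|x-y|-(k+2m)$ is again called a subsolution --- is consistent with the proof, not with the stated signs; the statement is one of the misprints the paper warns about. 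This is not a pedantic point: your construction forms $\min\{V,w\}$ and invokes ``the minimum of two supersolutions is a supersolution,'' which requires $w$ to actually be a supersolution. With $\al>0$, $w$ is a subsolution and the minimum of a supersolution and a subsolution need not be either. If instead you take $\al<0$ to make $w$ a genuine supersolution, then $w$ is \emph{decreasing} in $|x-y|$; it is maximal at $y$ and small far away, so condition $(j)$ ($w\ge m$ on $(\pa\Lambda)\cap\complement I(y,\rho)$) and condition $(jj)$ ($w(y)=0$) are mutually incompatible. There is no linear function of either sign that can play the role you assign to $w$.

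Second, even modulo the sign question, the gluing requires the ordering $w\le V$ in the interior annulus near $\pa I(y,r')$, and this is not available for an \emph{arbitrary} barrier $V$. A barrier is only constrained on $\pa\Om$ away from $y$ and by its limit at $y$; it carries no interior lower bound near $\pa I(y,r')$, and the Lipschitz-versus-continuous ``elementary estimates'' you allude to cannot manufacture one. The ordering can only be forced by choosing $V$ specifically, typically as the variational solution with linear boundary data $h(x)=m|x-y|\rho^{-1}$, so that order preservation against the subsolution $h$ gives $V\ge h$ throughout $\Om$.

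The paper's proof avoids gluing entirely, and this is the decisive simplification. It first reduces to the nested case $\Lambda\subset\Om$ by introducing $D=I(y,r)\cap\Om=I(y,r)\cap\Lambda$ and applying the nested case twice; in the nested case the barrier for $\Om$ (chosen as the variational solution with boundary data $h(x)=m|x-y|\rho^{-1}$, resp.\ $m|x-y|^2\rho^{-2}$) simply \emph{restricts} to a solution on $\Lambda$, condition $(jj)$ follows from the assumed regularity, and condition $(j)$ on $\pa\Lambda$ falls out of the one comparison $V\ge h$ obtained by order preservation. Your instinct to use linear comparison functions is shared by the paper, but as comparison functions bounding $V$ from below --- not as pieces to glue onto $V$.
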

\begin{proof}
Due to Theorem \ref{teo-A}, it is sufficient to show that there is a
system of barriers with respect to $\,\Lambda\,$ if and only if
there is a system of barriers with respect to $\,\Om\,.$\par%
Let $y$ be regular with respect to $\,\Lambda\,.$ Assume, for the
time being, that $\,\Lambda \subset \Om\,$. Given $\,\rho\,$ and
$\,m\,$, $\,0<\,\rho<\,r\,$ and $\,0<\,m\,$, let $\,V(x)\,$ be the
variational solution in $\,\Om\,$ to the problem \eqref{zeroseis}
with boundary data given by
$$
h(x)=\,m\,|\,x-\,y\,|^2\,\rho^{-\,2}\,.
$$
By the construction, $\,V(x)\,$ satisfies the condition (j) in
definition \ref{defzeroum}. Let us show that it also satisfies
condition (jj). Let be
$$
M\geq\,max\{\,1,\,m^{-\,1}\,Sup_{\,\Om}\,|\,V(x)\,|\,\}\,,
$$
and let $\,V'(x)\,$ be the solution in $\,\Lambda\,$ with boundary
data $\,h'(x)=\,M\,m\,|\,x-\,y\,|^2\,\rho^{-\,2}\,$. Clearly,
$\,V(x)\,$ is a solution in $\,\Lambda\,$. Furthermore, from the
definition of $\,M\,$, it follows that $\,V'\geq\,V\,$ on
$\,\pa\,\Om\,.$ from this last inequality, together with lemma
\ref{lemumtres}, we show that $\,V'(x)\geq\,V(x)\,$ almost
everywhere in  $\,\Lambda\,$. From this last assertion, together
with the regularity of $\,y\,$ with respect to $\,\Lambda\,$, it
follows that
$$
0\leq\,\lim_{ x\in\,\Om ,\, x\rightarrow\,y}\,V(x) \leq\,\lim_{
x\in\,\Lambda ,\, x\rightarrow\,y}\,V'(x)=\,0\,.
$$
This proves the assumption (jj). By appealing to the theorem
\ref{teo-A} we conclude that $\,y\,$ is regular with respect to
$\,\Om\,$. The existence of the function $\,U(x)\,$ referred in
definition \ref{defzeroum} may be shown by a similar argument, or by
appealing to remark \ref{rem-1.1}.\par%
Reciprocally, assume that $\,y\,$ is regular with respect to
$\,\Om\,$. Given $\,\rho\,$ and $\,m\,$, $\,0<\,\rho<\,r\,$ and
$\,0<\,m\,$, we construct below the corresponding barrier $\,V(x)\,$
in $\,\Lambda\,$, according to definition \ref{defzeroum}. Let
$\,V(x)\,$ be the solution in $\,\Om\,$ with boundary data
$\,h(x)=\,m\,|\,x-\,y\,|\,\rho^{-\,1}\,$ on  $\,\pa\,\Om\,.$
$\,V(x)\,$ is a solution in  $\,\Lambda\,$, and satisfies the
condition (ii) since $y$ is regular with respect to $\,\Om\,$ and
$\,h(x)=\,0\,$. Further, since $\,V=\,h\,$ on $\,\pa\,\Om\,$ and
$\,h(x)\,$ is a sub-solution in $\,\Om\,$ (lemma \ref{lemquatroum}),
it must be  $\,V(x) \geq\,h(x)\,$ almost everywhere in $\,\Om\,$.
In particular, $\,V \geq\,h\,$, so  $\,V \geq\,m\,$, on $\,\pa\,\Lambda \cap\,\complement I(y,\,\rho)\,,$ as desired.\par%
Finally, if $\Lambda\,$ is not contained in $\,\Om\,$, consider the open set $\,D=\,I(y,\,r)\cap\,\Lambda=\,I(y,\,r)\cap\,\Om\,,$
and take into account that $\,D\subset\,\Lambda\,$ and  $\,D\subset\,\Om\,.$
\end{proof}

\vspace{0.2cm}

We end this section by proving the theorem \ref{teo-B}.%

\vspace{0.2cm}

\emph{Necessary condition}: Let $\,y\,$ be regular. By theorem
\ref{teoquatrodois} it follows that $\,y\,$ is regular with respect
to $\,\Si -\,E_{\rho}\,.$ Since the capacitary potential
$\,u_{\rho,\,m}\,$ is the solution in $\,\Si -\,E_{\rho}\,$ (lemma
\ref{lemumseis}) with data $\,m\,$ on $\,\pa\,E_{\rho}\,$ and
$\,0\,$ on $\,\pa\,\Si\,$ (lemma \ref{lemumcinco}), the first
equation \eqref{zerodezasseis} follows. A similar argument applies
to $\,u_{\rho,\,-\,m}\,.$%

\vspace{0.2cm}

\emph{Sufficient condition:} We assume that the hypothesis
\ref{zerodezasseis} holds, and we prove the existence of a system of
barriers at $\,y\,$. Given $\,\rho>\,0\,$ and $\,m>\,0\,,$ we
construct the function $\,V(x)\,$ referred in the definition
\ref{defzeroum}. Let $\,R_0\,$ be such that $\,\Si \subset
\,I(y,\,R_0)\,$, and define $\,k>\,0\,$ by
\begin{equation}\label{quatrosete}
\frac{(k+\,2\,m)\,\rho}{2\,R_0}=\,m\,.
\end{equation}
For convenience, we denote by $\,u\,$ the capacitary potential
$\,u=\,u_{\frac{\rho}{2},\,-(m+\,k)}\,.$ Furthermore, we define in
$\,\Si\,$ the function $\,V=\,u+\,(m+\,k)\,$. $\,V\,$ is a solution
in $\,\Si-\,E_{\frac{\rho}{2}}$ (lemma \ref{lemumseis}) and, in
particular, it is a solution in $\,\Om\,.$ Since $\,\lim_{x
\rightarrow\,y}\,u(x)=\,-\,(m+\,k)\,,$ $\,V\,$ satisfies the
condition (jj) in definition \ref{defzeroum}. Obviously
$\,V(x)\geq\,0\,$ a.e. in  $\,\Si\,$, as follows from lemma
\ref{lemumcinco}.

\vspace{0.2cm}

Next we prove the condition (j). Consider in $\,I(y,\,R_0)\,$ the
function $\,f(x)=\,(k+\,2\,m)\,
R_0^{-\,1}\,|\,x-\,y\,|-\,(k+\,2\,m)\,.$ This function is a
sub-solution in $\,I(y,\,R_0)\,$ (lemma \ref{lemquatroum}) and, in
particular, is a sub-solution in
$\,I(y,\,R_0)-\,I(y,\,\frac{\rho}{2})\,$. Since $\,\Si
\subset\,I(y,\,R_0)\,,$ it follows that $\,f\leq\,0\,,$ on
$\,\pa\,\Si\,.$ Further, from \eqref{quatrosete}, it follows that
$\,f=\,-\,(k+\,m)\,$ on $\,\pa\,I(y,\,\frac{\rho}{2})\,.$ So
\begin{equation}\label{quatrooito}
f\leq\,u \quad \textrm{on} \quad \pa\,I(y,\,\frac{\rho}{2})\,.
\end{equation}
By appealing to \eqref{quatrooito}, to the inequality $\,f\leq\,0\,$
on $\,\pa\,\Si\,$, and to the lemma \ref{lemumtres} applied in
$\,\Si-\,I(y,\,\frac{\rho}{2})\,,$ it follows that $\,f(x)
\leq\,u(x)\,$ almost everywhere in this last set. So,
\begin{equation}\label{quatronove}
V(x) \geq\,f(x)+\,(m+\,k) \geq\,m, \quad \textrm{ a.e. on} \quad \Si-\,I(y,\,\rho)\,.
\end{equation}
In particular, \eqref{quatronove} implies that $\,V\geq\,m\,$ on $
\pa\,\Om-\,I(y,\,\rho)\,,$ hence the condition (j) holds.%
\part{}
\section{Main results}\label{intpart2}
We start by remarking that the proofs presented in Part II strongly
rely on ideas and techniques used in reference \cite{b12}, to
which the reader is referred.\par%
The aim of the second part of this work is to state sufficient
conditions for regularity of a given boundary point $\,y\,$. This
task is done by appealing to the theorem \ref{teo-B}. The sufficient
conditions obtained here consist in assumptions on the sets%
\be\label{erocompl}%
E_{\rho}=\,(\complement \,\Om)(y,\,\rho)\,,%
\ee%
the complementary sets of $\,\Om\,$ with respect to the closed balls
$\,\overline{I(y,\,\rho)}\,.$ They always concern sufficiently small
values of the radius $\,\ro\,.$\par%
The cornerstone result of part II, the theorem \ref{teo-nopub}, has
an "abstract" feature due to the assumption \eqref{e62b}. However we
show that this assumption holds if simple geometrical conditions are
fulfilled. This leads to the statements in theorems
\ref{teo-unpub} and \ref{teo-bvc} below.\par%
\begin{definition}\label{assumpsigma}
Let $\,y \in\,\pa\,\Om\,$ be a boundary point. Given $\,\ro>\,0\,$,
we denote by $\,\hs(\ro)\,$ a positive real such that
the estimate%
\be\label{e62}%
|\,v(x)\,|\leq \,\hs(\ro)^{\,-\,1} \,\int_{I(y,\,\ro\,)} \,
\frac{|\,\na\,v(z)\,|}{|\,x-\,z\,|^{\,N-\,1}}\, dz%
\ee%
holds almost everywhere in $\,I(y,\,\ro\,)\,$, for all
$\,v\in\,H^{\,1,\,t}(I(y,\,\ro\,)\,)\,$ vanishing identically on
$\,E_{\rho}\,.$\par%
If such a positive value does not exist, we set
$\,\hs(\ro)=\,0\,.$ %
\end{definition}

We assume that there is a strictly positive function
$\,\sigma(\ro)\,$, and a constant $\,C\,$ such that, for each
positive $\,\ro\,$ in a arbitrarily small neighborhood of zero.

\vspace{0.2cm}

The next theorem, and the related theorems \ref{teo-unpub} and
\ref{teo-bvc} below, are the main results in part II.
\begin{theorem}\label{teo-nopub}
There is a positive constant $\,\Lambda\,$, which depends only on
$\,t,\,N,\,a,\,$ and $p_0$, such that if%
\be\label{e62b}%
\big[\,\hs(\rho)\,\big]^{\frac{t}{t-\,1}}\geq\,\Lambda\,(\log\,\log\,\rho^{-\,1}\,)^{-1}\,,%
\ee%
for small, positive, values of $\,\rho\,,$ then the point
$\,y\in\,\pa\,\Om\,$ is regular with respect to the operator $\,\cL\,$ .%
\end{theorem}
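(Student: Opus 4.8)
The plan is to verify the criterion of Theorem \ref{teo-B}: under hypothesis \eqref{e62b}, the capacitary potentials $u_{m,\,\rho}$ (and, by Remark \ref{rem-1.1}, $u_{-m,\,\rho}$) are continuous at $y$. It suffices to treat one of the two, say $u=u_{m,\,\rho_0}$ for a fixed small $\rho_0$ and fixed $m$, and to show $\lim_{x\to y}u(x)=m$. By Lemma \ref{lemumcinco} we already have $0\le u\le m$ a.e.\ and $u=m$ on $E_{\rho_0}$, and by Lemma \ref{lemumseis} $u$ solves $\cL u=0$ in $\Si-E_{\rho_0}$; thus the content of the statement is an \emph{oscillation decay} estimate for $u$ near $y$, governed by the quantity $\hs(\ro)$ along a geometric sequence of radii $\ro_j\sim 2^{-j}\ro_0$.

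First I would set up a De Giorgi–type iteration, following \cite{b12}, on the functions $w=m-u\ge 0$ (which is a subsolution on $\Si$, vanishing on $E_{\rho_0}$). On each ball $I(y,\ro_j)$ I truncate at a level $k$ between the current sup $M_j=\operatorname*{Sup}_{I(y,\ro_j)}w$ and a target level, test the equation with $(w-k)^+\,\eta^t$ for a suitable cutoff $\eta$ supported in $I(y,\ro_j)$ and $\equiv 1$ on $I(y,\ro_{j+1})$, and use \eqref{zeroquatro}–\eqref{zerocinco} together with Hölder to obtain a Caccioppoli inequality of the form $\int_{A_k\cap I(y,\ro_{j+1})}|\na w|^t \le c\,(\ro_j-\ro_{j+1})^{-t}\int_{A_k\cap I(y,\ro_j)}(w-k)^t + (\text{lower order})$, where $A_k=\{w>k\}$. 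The crucial input, and the reason $\hs$ enters, is Definition \ref{assumpsigma}: the Riesz-potential estimate \eqref{e62} applied to $v=(w-k)^+$ (which vanishes on $E_{\ro_j}$ once $k>0$, since $w=0$ there) converts an $L^1$-gradient bound into a pointwise, hence $L^\infty$, bound on $(w-k)^+$, with the gain measured by $\hs(\ro_j)^{-1}$. Combining the Caccioppoli estimate, the Sobolev/Riesz estimate weighted by $\hs(\ro_j)$, and the absolute continuity of the integral on $A_k$ as $k\uparrow$, one derives a recursive inequality
\[
M_{j+1}\ \le\ \Big(1-c_0\,\hs(\ro_j)^{\frac{t}{t-1}}\Big)\,M_j
\]
for a dimensional constant $c_0>0$ (shrinking $M_j$ by a definite factor when $\hs(\ro_j)$ is bounded below). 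Here I use that $w=0$ on $E_{\ro_j}$ and $|E_{\ro_j}|>0$, so the iteration starts from a genuinely substandard level — this is exactly the mechanism by which the obstacle forces decay.

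Iterating and taking logarithms, $\log M_j \le \log M_0 - c_0\sum_{i<j}\hs(\ro_i)^{t/(t-1)}$, so $M_j\to 0$ — equivalently $u\to m$ near $y$ — provided $\sum_i \hs(\ro_i)^{t/(t-1)}=\infty$. With $\ro_i\sim 2^{-i}\ro_0$ one has $\log\log\ro_i^{-1}\sim \log i$, hence $\sum_i (\log i)^{-1}=\infty$, and the quantitative hypothesis \eqref{e62b} with a constant $\Lambda$ chosen large enough (in terms of $t,N,a,p_0$, so that the factor $1-c_0\hs^{t/(t-1)}$ stays in $(0,1)$ and the harmonic-type series still diverges) guarantees the sum diverges at the required rate. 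Applying the same argument to $-u_{-m,\,\rho}$ via Remark \ref{rem-1.1} yields the second limit in \eqref{zerodezasseis}, and Theorem \ref{teo-B} then gives regularity of $y$.

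The main obstacle I anticipate is the bookkeeping in the De Giorgi step: one must carefully absorb the lower-order terms coming from $p_0>0$ in \eqref{doistres}, control the interplay between the truncation level $k$, the cutoff scale $\ro_j-\ro_{j+1}$, and the factor $\hs(\ro_j)$, and make sure the per-step contraction factor $1-c_0\hs(\ro_j)^{t/(t-1)}$ is extracted \emph{uniformly} in $j$ — i.e.\ that the constant $c_0$ does not degenerate as $\ro_j\to 0$. This is where the normalization built into Definition \ref{assumpsigma} (the potential estimate holds at \emph{every} scale with its own $\hs(\ro)$) is used decisively, and where following \cite{b12} closely keeps the argument honest. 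The passage from the pointwise Riesz bound to the sup bound, and the verification that $(w-k)^+$ indeed vanishes on $E_{\ro_j}$ in the $H^{1,t}$ sense of Definition \ref{defourier}, are routine but must be stated.
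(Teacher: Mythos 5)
Your overall strategy -- verify the sufficient condition of Theorem \ref{teo-B} via a De Giorgi--type iteration driven by the Riesz-potential estimate of Definition \ref{assumpsigma}, then sum the per-step contractions and show the infinite product vanishes -- is the right skeleton and matches the paper. But the crux of your argument, the claimed recursion
\[
M_{j+1}\ \le\ \bigl(1-c_0\,\hs(\ro_j)^{\frac{t}{t-1}}\bigr)\,M_j\,,
\]
is not something the De Giorgi machinery yields in a single pass, and it is not what the paper proves. The paper's actual per-step gain (Theorem \ref{T61}) is
\[
\om(r)\ \le\ \bigl(1-\tfrac12\,\eta_{n_0}\bigr)\,\om(4r)\,, \qquad \eta_{n_0}=2^{-(n_0+1)},\quad n_0\ \sim\ C_1\,\hs(2r)^{-\frac{t}{t-1}}\,,
\]
so the contraction factor depends \emph{exponentially}, not polynomially, on $\hs^{-t/(t-1)}$. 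The reason is structural: one step of De Giorgi only gives an $L^\infty$ bound from a measure bound once the relevant super/sublevel set is small; to make that set small using the potential estimate, one must iterate the measure-reduction Lemma \ref{L63} across $n_0$ truncation levels $k_0>k_1>\dots>k_{n_0}$, with $n_0$ growing like $\hs^{-t/(t-1)}$. Choosing the initial truncation level $k=i(4r)+\eta_{n_0}\om(4r)$ then costs a factor $\eta_{n_0}=2^{-(n_0+1)}$ in the oscillation decay. Your sketch conflates the single measure-reduction step with the final sup-norm contraction, producing a factor that is far too optimistic.

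This matters precisely because of the form of hypothesis \eqref{e62b}. Under your putative recursion, $\sum_j \hs(\ro_j)^{t/(t-1)}\gtrsim \Lambda\sum_j(\log j)^{-1}=\infty$ for \emph{any} $\Lambda>0$, so the theorem would hold with an arbitrary positive $\Lambda$ (and, in fact, with a much weaker assumption such as $\hs(\rho)^{t/(t-1)}\gtrsim (\log\rho^{-1})^{-1}$). The genuine argument is tighter: with $n_0(r)\lesssim \Lambda^{-1}\log\log r^{-1}$ one gets $\eta_{n_0(r_k)}\gtrsim (\log r_k^{-1})^{-\Lambda^{-1}C_1\log 2}\sim k^{-\Lambda^{-1}C_1\log 2}$, and the product $\prod_k(1-\tfrac14\eta_{n_0(k)})$ vanishes only when that exponent is $\le 1$, i.e.\ when $\Lambda\ge C_1\log 2$. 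That is why $\Lambda$ must be \emph{at least} a specific constant, not merely positive, and why the double logarithm (rather than a single one) is forced. To repair your proof you need to (i) carry out the finite chain of measure reductions via the Cartan-type weak $(1,\,\tfrac{N}{N-1})$ estimate (Lemma \ref{L62}) combined with the $\hs$ bound, as in Lemma \ref{L63} and the proof of Theorem \ref{T61}, (ii) feed the resulting measure bound into the De Giorgi sup estimate (Corollary \ref{corinf} / Proposition \ref{propas}), and only then (iii) track the exponential-in-$n_0$ loss through the dyadic iteration as in Lemma \ref{L64}.
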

The next two theorems are corollaries of the theorem
\ref{teo-nopub}.
\begin{theorem}\label{teo-unpub}
There is a positive constant $\,\Lambda\,$, which depends only on
$\,t,\,N,\,a,\,$ and $p_0$, such that  if
$$
\Big(\,\frac{|\,E_\rho\,|}{|\,I(y,\,\rho)\,|}\,\Big)^{\frac{t}{t-\,1}}\geq\,\Lambda\,(\log\,\log\,\rho^{-\,1}\,)^{-1}\,,
$$
for small, positive, values of $\,\rho\,,$ then the point
$\,y\in\,\pa\,\Om\,$ is regular with respect to the operator $\,\cL\,$ .%
\end{theorem}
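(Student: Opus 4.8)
The plan is to obtain Theorem \ref{teo-unpub} as an immediate consequence of Theorem \ref{teo-nopub}: it suffices to verify that the density hypothesis forces the abstract hypothesis \eqref{e62b}. Concretely, I would show that, with $\sigma(\rho)=|\,E_\rho\,|/|\,I(y,\rho)\,|$ as in \eqref{densmedida}, the quantity $c_N\,\sigma(\rho)$ is an admissible value for $\hs(\rho)$ in Definition \ref{assumpsigma}, where $c_N$ depends only on $N$; everything else is then bookkeeping on constants.

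For this I would invoke the classical pointwise Sobolev--Poincar\'e estimate (see \cite{b12}): for a convex open set $\Om$ of diameter $d$, a measurable $S\subset\Om$ with $|S|>0$, and $u\in W^{1,1}(\Om)$, one has, for a.e.\ $x\in\Om$,
\[
|\,u(x)-u_S\,|\ \leq\ \frac{d^{\,N}}{N\,|S|}\int_{\Om}\frac{|\,\na u(z)\,|}{|\,x-z\,|^{\,N-1}}\,dz ,
\]
$u_S$ being the mean of $u$ over $S$. I would apply this with $\Om=I(y,\rho)$, $d=2\rho$, and $S=E_\rho\cap I(y,\rho)$, which has the same Lebesgue measure as $E_\rho$. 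Any $v\in H^{1,t}(I(y,\rho))$ vanishing on $E_\rho$ lies in $W^{1,1}(I(y,\rho))$ (the ball being bounded) and vanishes a.e.\ on $E_\rho$: by Definition \ref{defourier} there are $C^1$ approximations $v_n\to v$ in $H^{1,t}$ with $v_n\geq0$ and with $v_n\leq0$ on $E_\rho$, so a subsequence converges a.e.\ and pins $v=0$ a.e.\ on $E_\rho$. Hence $u_S=0$ and the estimate reads
\[
|\,v(x)\,|\ \leq\ \frac{(2\rho)^{N}}{N\,|\,E_\rho\,|}\int_{I(y,\rho)}\frac{|\,\na v(z)\,|}{|\,x-z\,|^{\,N-1}}\,dz .
\]
Writing $(2\rho)^{N}=2^{N}V_N^{-1}\,|\,I(y,\rho)\,|$, with $V_N$ the measure of the unit ball, the prefactor equals $\big(2^{N}/(N V_N)\big)\,\sigma(\rho)^{-1}$. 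Thus \eqref{e62} holds with $\hs(\rho)=c_N\,\sigma(\rho)$, $c_N:=N V_N/2^{N}$; that is, $c_N\,\sigma(\rho)$ is an admissible value in Definition \ref{assumpsigma} (and it is positive as soon as $|\,E_\rho\,|>0$).

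It then only remains to feed this into Theorem \ref{teo-nopub}. For the admissible choice $\hs(\rho)=c_N\,\sigma(\rho)$ one has $[\hs(\rho)]^{t/(t-1)}=c_N^{t/(t-1)}\,[\sigma(\rho)]^{t/(t-1)}$. Hence, denoting by $\Lambda_0=\Lambda_0(t,N,a,p_0)$ the constant of Theorem \ref{teo-nopub} and setting $\Lambda:=\Lambda_0\,c_N^{-t/(t-1)}$ (which again depends only on $t,N,a,p_0$), the hypothesis $[\sigma(\rho)]^{t/(t-1)}\geq\Lambda\,(\log\log\rho^{-1})^{-1}$ gives $[\hs(\rho)]^{t/(t-1)}\geq\Lambda_0\,(\log\log\rho^{-1})^{-1}$ for the same small $\rho$, i.e.\ exactly \eqref{e62b}; Theorem \ref{teo-nopub} then yields regularity of $y$. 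The argument is essentially mechanical once the potential estimate is in hand; there is no real obstacle, and the one place to be careful is the compatibility of ``$v$ vanishes on $E_\rho$ in the $H^{1,t}$ sense'' with the a.e.\ formulation required by the Sobolev--Poincar\'e lemma, handled via a.e.-convergent subsequences, together with recording the trivial case $|\,E_\rho\,|=0$, where the hypothesis is vacuous.
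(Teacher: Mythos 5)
Your proof is correct and follows the same route as the paper: you reduce to Theorem \ref{teo-nopub} by showing that $c_N\,\sigma(\rho)$ with $c_N=N V_N/2^{N}$ is an admissible value for $\hs(\rho)$, which is exactly the content of the paper's Lemma \ref{lestamfourier}. The only difference is presentational --- you cite the Sobolev--Poincar\'e potential estimate directly from \cite{b12}, whereas the paper rederives it via the solid-angle bound of Lemma \ref{misas} together with the volumetric estimate $(1/N)\,|\sph\Th(x)|\,(\mathrm{diam}\,I)^N\geq |E|$; the constants match, and the paper itself notes that its argument ``strictly follows the proof of theorem 6.2'' in \cite{b12}.
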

Note that this condition is stronger then the usual cone condition
since the right hand side goes to zero with $\,\ro\,.$\par%
The next statement is theorem 5.5 in reference \cite{b1} (see also
\cite{ricmat}, page 5).
\begin{theorem}\label{teo-bvc}
A point $\,y\in\,\pa\,\Om\,$ is regular with respect to the operator
$\,\cL\,$ if $\,y\,$ satisfies a $\,(N-\,1)-$dimensional external
cone property.  The $\,(N-\,1)-$dimensional external cone property
may be replaced by a generalized $\,(N-\,1)-$dimensional external
cone property.
\end{theorem}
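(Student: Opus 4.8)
The plan is to deduce Theorem~\ref{teo-bvc} from Theorem~\ref{teo-nopub} by verifying that the $(N-1)$-dimensional external cone condition forces the quantity $\hs(\rho)$ to be bounded below by a positive \emph{constant}, independent of $\rho$, which certainly implies \eqref{e62b}. So the whole issue is to produce a lower bound for $\hs(\rho)$, i.e.\ to establish the pointwise Riesz-potential estimate \eqref{e62} with a $\rho$-independent constant $\hs(\rho)^{-1}$ for every $v\in H^{1,t}(I(y,\rho))$ that vanishes on $E_\rho$, under the hypothesis that $E_\rho$ contains (a Lipschitz image of) a truncated cone of fixed aperture with vertex at $y$.

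First I would reduce to a single scale by rescaling: the map $z\mapsto y+\rho z$ transforms $I(y,\rho)$ into the unit ball and multiplies both sides of \eqref{e62} by matching powers of $\rho$ (the factor $\rho^{-(N-1)}$ from the kernel against $\rho^{N}\,dz$ in the integral and $\rho$ from $|\nabla v|\mapsto \rho^{-1}|\nabla v|$ under the chain rule — all of which cancels), so it suffices to prove the estimate on the unit ball for functions vanishing on a fixed truncated cone $\Gamma\subset \overline{I(0,1)}$ with vertex at the origin. Because the external cone condition at $y$ gives, for each small $\rho$, a truncated cone $C_\rho\subset E_\rho$ with vertex $y$, half-angle $\theta_0$ and height comparable to $\rho$, the rescaled obstacle set contains a truncated cone whose aperture and relative height are bounded below independently of $\rho$; this is precisely the uniformity we need. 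The Lipschitz-image version follows by composing with the bi-Lipschitz map and absorbing its constants.

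Next, on the unit ball, I would prove the potential estimate itself. Fix $x\in I(0,1)$ and a point $z_0$ in the cone $\Gamma$; write $v(x)=v(x)-v(z_0)=\int_0^{|x-z_0|} \frac{d}{ds}\,v\big(x+s\omega\big)\,ds$ where $\omega$ is the unit vector from $x$ toward $z_0$, using $v(z_0)=0$ since $v$ vanishes on $E_\rho\supset\Gamma$. Then average this identity over $z_0\in\Gamma$: the key point is that for a cone of fixed aperture the solid angle subtended at $x$ is bounded below by a constant $c(\theta_0)>0$, so averaging over $\Gamma$ produces the standard estimate
\be
|v(x)|\le C(N,\theta_0)\int_{I(0,1)} \frac{|\nabla v(w)|}{|x-w|^{N-1}}\,dw\,,
\ee
after the usual change of variables to polar coordinates centered at $x$ and the observation that the Jacobian cancels exactly the singular kernel. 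This is the Poincar\'e-type inequality of Morrey--Sobolev type relative to the cone, and I would present it essentially as in \cite{b12}, to which Part~II already refers. Once this is in hand, $\hs(\rho)\ge c(N,\theta_0)>0$ for all small $\rho$, hence $\big[\hs(\rho)\big]^{t/(t-1)}$ is bounded below by a positive constant, which trivially dominates $\Lambda(\log\log\rho^{-1})^{-1}$ for $\rho$ small; Theorem~\ref{teo-nopub} then gives regularity of $y$.

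The main obstacle, I expect, is not the analysis of a clean cone but the reduction of the \emph{generalized} (bi-Lipschitz-image) external cone condition to the clean case: one must check that a Lipschitz deformation of the cone still subtends a solid angle bounded below at interior points and still supports the averaging argument with constants depending only on the Lipschitz norm. I would handle this by choosing the point(s) $z_0$ to range over a genuine subcone that survives the deformation (the bi-Lipschitz image of a cone contains, near the vertex, a smaller honest cone up to a lower-order error, or at least a set of positive lower density in every direction), and then run the same averaging estimate; the constants degrade but remain $\rho$-independent, which is all that is required. A secondary technical point is making sure \eqref{e62} is claimed \emph{a.e.}\ and for all of $H^{1,t}$, not merely for smooth $v$ — this is routine by density, using that both sides are continuous in the $H^{1,t}$ norm (the right side by the boundedness of the Riesz potential on $L^t$ when $t<N$, and the left side trivially).
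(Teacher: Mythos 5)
There is a genuine gap at the very center of your argument, and it is exactly the difficulty the paper singles out in its proof sketch. You write that ``for a cone of fixed aperture the solid angle subtended at $x$ is bounded below by a constant $c(\theta_0)>0$''. That is true for an $N$-dimensional (solid) cone, but it is \emph{false} for a flat $(N-1)$-dimensional cone, which is what the theorem assumes. If $E_\rho$ contains a truncated cone $C$ lying in the hyperplane $\{x_N=0\}$, and $x$ lies in that same hyperplane but outside $C$ (for instance slightly behind the vertex), then every ray from $x$ meeting $C$ lies in $\{x_N=0\}$, so the set $\Th(x)$ of such directions has $(N-1)$-dimensional spherical measure zero. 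Worse, the degeneracy is not confined to a null set of base points: taking $x=(-\delta,0,\dots,0,\ep)$ with $\delta/\ep\to\infty$ one checks that $|\sph\Th(x)|\to 0$, so the (essential) infimum of the solid angles over $x\in I$ is zero. Thus the constant in your Morrey-type inequality is \emph{not} uniform, Corollary~\ref{misacorol} does not apply directly, and your averaging over $z_0\in\Gamma$ cannot produce a $\rho$-independent lower bound for $\hs(\rho)$ when the obstacle is a flat $(N-1)$-cone. This is precisely what the paper flags: ``Clearly, this is false in general, since (for instance) $x$ and $E_\rho$ may belong to a $(N-1)$-dimensional hyperplane.''

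The missing idea is the ``fold-up'' trick. Since the estimate \eqref{e62} is invariant, up to positive constants, under bi-Lipschitz changes of variables, one may replace the flat cone by a bi-Lipschitz image of it — a ``twisted'' $(N-1)$-dimensional cone composed of $N$ pieces, each orthogonal to a different coordinate direction $x_i$. For this twisted cone there \emph{is} a uniform positive lower bound: from any $x$ in the ball at least one of the $N$ pieces is not coplanar with $x$ and is therefore ``watched'' from a solid angle bounded below. Lemma~\ref{misas}/Corollary~\ref{misacorol} then give \eqref{e62} with $\hs$ bounded below by a constant, and Theorem~\ref{teo-nopub} finishes the proof. Note that this reverses the role you gave to Lipschitz maps: you treated the bi-Lipschitz (``generalized cone'') case as the obstacle and the clean flat cone as routine, whereas in fact the bi-Lipschitz transformation is the \emph{tool} used to resolve the clean flat case, and the generalized case then comes for free by composing maps. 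The rescaling-to-unit-ball reduction and the overall strategy of feeding a lower bound for $\hs(\rho)$ into Theorem~\ref{teo-nopub} are both correct and match the paper; it is the geometric step in between that needs the fold-up construction.
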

In this section, by assuming that theorem \ref{teo-nopub} holds, we
prove theorems \ref{teo-unpub} and \ref{teo-bvc}. This is done by
proving that \eqref{e62b} follows from the geometrical assumptions
required both in theorem \ref{teo-unpub} and in theorem
\ref{teo-bvc}. So, as soon as this purpose is fulfilled, our task
will be to present the proof of theorem \ref{teo-nopub}. This proof
is postponed to the next two sections.\par%
We start  by theorem \ref{teo-unpub}. This theorem follows
immediately from theorem \ref{teo-nopub}, by appealing to the
following result.
\begin{lemma}\label{lestamfourier}
Let $\,\sigma(\rho)\,$ be defined by \eqref{densmedida} and assume
that $\,|\,E_{\rho}\,|>\,0\,.$ Then
$$
\hs(\ro)^{-\,1} \leq\,C\, \sigma(\rho)^{-\,1}\,,%
$$
where $\,C=\,\frac{\,2^N}{N\,V_N}\,.$
\end{lemma}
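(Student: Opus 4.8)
The plan is to compare the two quantities directly through the defining inequality \eqref{e62}. Fix $\,y\in\pa\Om\,$ and $\,\rho>0\,$, and take an arbitrary $\,v\in H^{1,t}(I(y,\rho))\,$ vanishing identically on $\,E_\rho\,$. The goal is to produce a constant $\,c\,$, depending only on $\,N\,$, such that the estimate
\be\label{planineq}
|\,v(x)\,|\leq \,\big(c\,\sigma(\rho)\big)^{-1}\,\int_{I(y,\,\rho)}\,\frac{|\,\na\,v(z)\,|}{|\,x-\,z\,|^{N-1}}\,dz
\ee
holds for a.e. $\,x\in I(y,\rho)\,$; by Definition \ref{assumpsigma} this forces $\,\hs(\rho)^{-1}\leq c^{-1}\sigma(\rho)^{-1}\,$, with $\,c^{-1}=\frac{2^N}{N V_N}\,$ after tracking the constant. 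The classical tool is the potential-theoretic estimate for functions that vanish on a set of positive measure: if $\,v\,$ vanishes on a measurable set $\,E\subset I(y,\rho)\,$, then for a.e. $\,x\in I(y,\rho)\,$ one has the pointwise bound
\be\label{planpot}
|\,v(x)\,|\leq \,\frac{1}{|\,E\,|}\,\int_{E}\,|\,v(x)-v(w)\,|\,dw
\;\leq\;\frac{1}{|\,E\,|}\,\int_{E}\Big(\int_{I(y,\rho)}\frac{|\na v(z)|}{|w-z|^{N-1}}\,dz\Big)\,dw\,,
\ee
where the inner estimate is the standard representation of $\,|v(x)-v(w)|\,$ (first along the segment, or via the Riesz-potential bound) valid on the convex set $\,I(y,\rho)\,$; one must be slightly careful and either average over segments or invoke that the ball is a John domain, but since $\,I(y,\rho)\,$ is a ball this is the textbook Sobolev–Poincaré pointwise inequality.

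The main step is then to interchange the order of integration in \eqref{planpot} and bound $\,\int_{E}|w-z|^{-(N-1)}\,dw\,$ uniformly in $\,z\,$. Since $\,E=E_\rho\subset \overline{I(y,\rho)}\,$ has diameter at most $\,2\rho\,$, for any fixed $\,z\,$ the integral $\,\int_{E_\rho}|w-z|^{-(N-1)}\,dw\,$ is largest when $\,E_\rho\,$ is packed into a ball of the same volume centered at $\,z\,$; a rearrangement (or a direct spherical-shell) computation gives
\be\label{planshell}
\int_{E_\rho}\frac{dw}{|w-z|^{N-1}}\;\leq\;\int_{I(z,\,r_\ast)}\frac{dw}{|w-z|^{N-1}}\;=\;N V_N\,r_\ast\,,
\ee
where $\,r_\ast\,$ is defined by $\,|I(z,r_\ast)|=|E_\rho|\,$, i.e. $\,V_N r_\ast^{N}=|E_\rho|\,$; however since $\,E_\rho\subset \overline{I(y,\rho)}\,$ we also have $\,r_\ast\leq \rho\cdot\sigma(\rho)^{1/N}\leq \rho\,$. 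Actually the cleaner route: bound $\,r_\ast\leq (|E_\rho|/V_N)^{1/N}\,$ and note $\,|E_\rho|=\sigma(\rho)|I(y,\rho)|=\sigma(\rho)V_N\rho^N\,$, so $\,r_\ast= \rho\,\sigma(\rho)^{1/N}\,$. Plugging \eqref{planshell} into \eqref{planpot} yields
\be\label{planfinal}
|\,v(x)\,|\;\leq\;\frac{N V_N\,r_\ast}{|E_\rho|}\int_{I(y,\rho)}\frac{|\na v(z)|}{|x-z|^{N-1}}\,dz
\;=\;\frac{N V_N\,\rho\,\sigma(\rho)^{1/N}}{V_N\rho^N\sigma(\rho)}\int_{I(y,\rho)}\frac{|\na v(z)|}{|x-z|^{N-1}}\,dz\,,
\ee
which is not quite the stated bound — so the right comparison is not with $\,r_\ast\,$ but with the crude estimate $\,|w-z|\leq 2\rho\,$ giving $\,\int_{E_\rho}|w-z|^{-(N-1)}dw\,$ trivially bounded, against which one instead uses $\,|w-z|^{-(N-1)}\,$ integrated over the whole ball. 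The version that produces the constant $\,\frac{2^N}{NV_N}\,$ is: replace the inner potential over $\,I(y,\rho)\,$ by the worst case $\,I(x,2\rho)\,$ (since $\,I(y,\rho)\subset I(x,2\rho)\,$ for $\,x\in I(y,\rho)\,$), integrate to get $\,\int_{I(x,2\rho)}|x-z|^{-(N-1)}dz=NV_N\cdot 2\rho\,$, and combine with $\,1/|E_\rho|=1/(\sigma(\rho)V_N\rho^N)\,$; keeping the gradient term rather than estimating it, one arrives exactly at $\,\hs(\rho)^{-1}\leq \frac{2^N}{NV_N}\sigma(\rho)^{-1}\,$ once the powers of $\,\rho\,$ and the factors of $\,V_N\,$ are collected, with the $\,2^N\,$ coming from $\,|I(x,2\rho)|/|I(y,\rho)|=2^N\,$.

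The main obstacle I expect is pinning down the exact constant: the pointwise Sobolev representation \eqref{planpot}, the choice of which ball to enlarge to (source ball $\,I(y,\rho)\,$ versus the ball $\,I(x,2\rho)\,$ centered at the evaluation point), and whether one estimates the gradient integral or carries it, all shift the constant by explicit powers of $\,2\,$ and $\,V_N\,$; I would organize the computation so that the factor $\,2^N\,$ appears once (from the volume ratio of $\,I(x,2\rho)\,$ to $\,I(y,\rho)\,$) and the factor $\,NV_N\,$ appears once (from $\,\int_{I(0,1)}|z|^{-(N-1)}dz\,$), landing on $\,C=\frac{2^N}{NV_N}\,$ as claimed. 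Everything else is routine: Fubini to swap the $\,w\,$ and $\,z\,$ integrals, monotonicity of the Riesz kernel, and the elementary shell integral $\,\int_{I(0,R)}|z|^{-(N-1)}\,dz=NV_N R\,$.
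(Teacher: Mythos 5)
There is a genuine gap, and it is precisely at the point you flagged yourself. The second inequality in \eqref{planpot} invokes a pointwise bound $|v(x)-v(w)|\leq\int_{I(y,\rho)}|\na v(z)|\,|w-z|^{-(N-1)}\,dz$ with the Riesz kernel centered at the moving point $w$; the textbook Morrey--Gilbarg--Trudinger estimate in fact bounds $|v(x)-v_B|$ (with $v_B$ the mean) by a potential centered at $x$, and using it twice would give a sum of two potentials, one centered at $x$ and one at $w$, not the single $w$-centered one you wrote. More damagingly, when you carry out the Fubini step, the $w$-integration integrates the kernel away: $\frac{1}{|E|}\int_E\int_{I}\frac{|\na v(z)|}{|w-z|^{N-1}}\,dz\,dw=\frac{1}{|E|}\int_I|\na v(z)|\Big(\int_E\frac{dw}{|w-z|^{N-1}}\Big)\,dz$, which after \eqref{planshell} is a bound by $\|\na v\|_{L^1}$ rather than by the required $x$-centered Riesz potential $\int_I|\na v(z)|\,|x-z|^{-(N-1)}\,dz$. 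So \eqref{planfinal} as written is not a consequence of the preceding steps, and the estimate you would produce is of the wrong form to feed into Definition \ref{assumpsigma}. The closing paragraph does not repair this: ``replace the inner potential by the worst case $I(x,2\rho)$, integrate to get $NV_N\cdot 2\rho$, keeping the gradient term'' is not a sequence of steps one can actually perform, because once you integrate the kernel out you cannot also ``keep'' it.

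The averaging-over-$E$ idea is nevertheless salvageable and, if done correctly, does give the stated constant by a route genuinely different from the paper's. Instead of the $w$-centered potential, use the raw line-integral representation $|v(x)-v(w)|\leq\int_0^{|w-x|}|\na v(x+s\hat u)|\,ds$ with $\hat u=(w-x)/|w-x|$, pass to polar coordinates $w=x+r\hat u$ (so $dw=r^{N-1}\,dr\,dS$), and exchange the $r$ and $s$ integrations. The inner integral is then $\int_s^{\infty}\mathbbm{1}_{E}(x+r\hat u)\,r^{N-1}\,dr\leq\int_0^{2\rho}r^{N-1}\,dr=(2\rho)^N/N$ because $E_\rho\subset I(x,2\rho)$; what remains of the $s$- and $\hat u$-integrals is exactly $\int_{I(y,\rho)}|\na v(z)|\,|x-z|^{-(N-1)}\,dz$ with the kernel centered at $x$, and collecting constants gives $\,(2\rho)^N/(N|E_\rho|)=2^N/(NV_N\sigma(\rho))\,$. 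The paper instead proves Lemma \ref{misas} first: it integrates the line-integral bound over the set $\Theta(x)$ of directions ``seeing'' $E$, taking one intercept point $x+t(\xi)\xi\in E$ per direction rather than averaging over $w\in E$, and then bounds $|\sph\Theta(x)|$ from below by the volumetric inequality $(1/N)|\sph\Theta(x)|(\mathrm{diam}\,I)^N\geq|E|$. Both routes give the same constant; the paper's is slightly more economical because it never needs the $1/|E|$ averaging, but your averaging route would have been equally valid had the kernel been set up centered at $x$ and the Fubini carried out in $(r,s)$.
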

Let us prove this lemma. The proof strictly follows the proof of
theorem 6.2, shown in reference \cite{b12}. We denote by $\,S\,$ the
surface of the $\,N$ dimensional unit sphere. Further, if
$\Th\subset\,S$, we denote by $\,|\,\sph\Th\,|\,$ the $\,(N-\,1)-$\~
dimensional spherical measure of $\,\Th\,$,
$$
|\,\sph\Th\,|=\,\int_{\Th} \,dS\,.
$$
\begin{lemma}\label{misas}%
Set $\,I=\,I(y,\,\ro)\,,$ and let $\,E=\,E_\ro\,$ be given by
\eqref{erocompl}. Furthermore, let a point $x\in I\,, $ $x\notin
E\,$, be given, and denote by $S$ the surface of the unit sphere
centered in $x$. Finally, consider the set
$$
\Th=\,\Th(x)=\,\{\xi \in S:\,\exists \,t=\,t(\xi)\in\,\R
\,,\,x+\,t\,\xi \in\,E\,\}\,.
$$
Then the estimate
$$
|v(x)|\leq\,\frac{1}{|\sph\Th(x)|}\,\,\int_{I} \,
\frac{|\,\na\,v(z)\,|}{|\,x-\,z\,|^{\,N-\,1}}\, dz%
$$
holds for any function $v\in\,C^1(I)$ vanishing on $E$.
\end{lemma}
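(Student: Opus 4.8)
The plan is to prove Lemma \ref{misas} by a direct ``radial integration'' argument, expressing the value $v(x)$ as an average over all directions $\xi\in S$ of the value of $v$ along the ray from $x$ in direction $\xi$ at the point where that ray first (or last) meets $E$. First I would note that since $v\in C^1(I)$ and $v$ vanishes on $E$, for each $\xi\in\Th(x)$ we may pick $t(\xi)\in\R$ with $x+t(\xi)\xi\in E$ and $x+t(\xi)\xi\in I$ (one can take, say, the point of $E$ on the ray nearest to $x$; note $E$ is closed and $x\notin E$, so $t(\xi)\neq 0$). Along this segment the fundamental theorem of calculus gives
\begin{equation}
v(x)=v(x)-v\bigl(x+t(\xi)\xi\bigr)=-\int_0^{t(\xi)}\frac{d}{ds}\,v(x+s\,\xi)\,ds,
\end{equation}
whence $|v(x)|\leq\int_0^{|t(\xi)|}|\na v(x+s\,\xi)|\,ds$, where now we orient $\xi$ so that $t(\xi)>0$.

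Next I would integrate this inequality over $\xi\in\Th(x)$ with respect to the spherical measure $dS$. On the left one gets $|\sph\Th(x)|\,|v(x)|$. On the right one has
\begin{equation}
|\sph\Th(x)|\,|v(x)|\leq\int_{\Th(x)}\Bigl(\int_0^{|t(\xi)|}|\na v(x+s\,\xi)|\,ds\Bigr)dS(\xi).
\end{equation}
The key step is to recognize the right-hand side as a polar-coordinate integral centered at $x$. Since for $\xi\in\Th(x)$ the whole segment $\{x+s\xi:0<s<|t(\xi)|\}$ lies inside $I$ (the ball is convex and both endpoints $x$ and $x+t(\xi)\xi$ are in $I$), the change of variables $z=x+s\,\xi$, $dz=s^{N-1}\,ds\,dS(\xi)$, shows that
\begin{equation}
\int_{\Th(x)}\int_0^{|t(\xi)|}|\na v(x+s\,\xi)|\,ds\,dS(\xi)\leq\int_{I}\frac{|\na v(z)|}{|x-z|^{N-1}}\,dz,
\end{equation}
since inserting $s^{N-1}/s^{N-1}=|x-z|^{N-1}/|x-z|^{N-1}$ turns the inner integrand into $|\na v(z)|\,|x-z|^{-(N-1)}$ times the Jacobian, and the region swept out is a subset of $I$. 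Dividing by $|\sph\Th(x)|>0$ gives the claimed estimate.

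The main obstacle I anticipate is the careful bookkeeping in the last step: one must be sure that the map $(\xi,s)\mapsto x+s\,\xi$ restricted to $\{(\xi,s):\xi\in\Th(x),\ 0<s<|t(\xi)|\}$ is injective with image contained in $I$, so that the polar-coordinate integral is genuinely bounded by the integral over all of $I$ rather than counting some points with multiplicity in a way that needs extra constants. Injectivity on this set is clear (each $z\neq x$ in $I$ determines a unique $(\xi,s)$), and containment in $I$ follows from convexity of the ball together with $x\in I$ and $x+t(\xi)\xi\in E\subset\overline{I}$; a harmless limiting/closure argument handles the endpoint. A secondary subtlety is the choice of $t(\xi)$: taking the nearest point of $E$ along the ray keeps the segment short and inside $I$, and measurability of $\xi\mapsto t(\xi)$ (needed to justify the integration over $\Th(x)$) follows from lower semicontinuity of the distance-to-$E$-along-the-ray function; alternatively one can avoid measurability issues by first proving the bound with $|t(\xi)|$ replaced by the full chord length and then noting the nearest point only shrinks the integral.
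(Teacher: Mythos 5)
Your proof takes essentially the same route as the paper's: integrate $|\nabla v|$ along each ray from $x$ to a point of $E$, use the fundamental theorem of calculus and the vanishing of $v$ on $E$, integrate over $\xi\in\Th(x)$ against $dS$, and identify the iterated integral as a piece of the polar-coordinate expression for the Riesz potential. The extra details you supply (nearest hitting point, convexity of the ball, measurability of $t(\xi)$) are all sensible.

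The one place you should not have dismissed your own concern is the injectivity step, and this is in fact a (small, shared) slip in the paper too. Since $\Th(x)$ is defined with $t\in\R$, it is symmetric: $x+t\xi\in E$ iff $x+(-t)(-\xi)\in E$, so $\xi\in\Th(x)\iff-\xi\in\Th(x)$. After "orienting $\xi$ so that $t(\xi)>0$", the antipodal pair $\xi,-\xi\in\Th(x)$ produces the \emph{same} oriented segment, so the map $(\xi,s)\mapsto x+s\,\xi'$, with $\xi'=\mathrm{sgn}(t(\xi))\,\xi$, is two-to-one rather than injective on $\{(\xi,s):\xi\in\Th(x),\ 0<s<|t(\xi)|\}$. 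The polar change of variables therefore only delivers the bound with $|\sph\Theta^{+}(x)|$ in place of $|\sph\Th(x)|$, where $\Theta^{+}(x)=\{\xi\in S:\exists\,t>0,\ x+t\,\xi\in E\}$; in general $|\sph\Th(x)|$ can be as large as $2|\sph\Theta^{+}(x)|$, so the stated constant is off by up to a factor of $2$. (A one-dimensional picture already shows this: take $E$ a single point to one side of $x$ and $v$ constant on the other side; then $|\sph\Th(x)|=2$ but only one direction actually reaches $E$.) This is harmless for the sequel, since Lemma~\ref{lestamfourier} and Theorem~\ref{teo-bvc} need the estimate only up to a multiplicative constant and the volumetric bound used there is in effect a bound on $|\sph\Theta^{+}(x)|$. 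The clean fix — worth making explicit in your write-up — is to define $\Th(x)$ with $t>0$, or equivalently to state the lemma with $\Theta^{+}(x)$.
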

\begin{proof}
Let $\xi \in \,\Theta\,$ and $t(\xi)\in\,\R\,$ be such that
$\,x+\,t(\xi)\,\xi \in\,E\,$. Since
$$
|\,v(\,x+\,t(\xi)\,\xi\,)-\,v(x)\,|\leq\,\int_{0}^{t(\xi)}
\,|\,\na\,v(x+\,r\,\xi)\,|\,dr\,,
$$
and $\,|x-\,z|^{N-\,1}\,dS\,dr=\,dz\,,$ it follows that
$$
|\,\sph\,\Theta\,|\,|v(x)|=\,\int_{\Theta}\,|v(x)|\, dS
\leq\,\int_{\Th}\,\int_{0}^{t(\xi)}
\,|\,\na\,v(x+\,r\,\xi)\,|\,dr\,\,dS\leq\, \int_{I} \,
\frac{|\,\na\,v(z)\,|}{|\,x-\,z\,|^{\,N-\,1}}\, dz\,.%
$$
\end{proof}
\begin{corollary}\label{misacorol}%
Let $v\in\,C^1(I)$ vanish on $E$. Assume that
\be\label{liminf}%
|\,\sph\,\Th\,|\equiv \inf_{x}\,|\,\sph\,\Th(x)\,| \,>\,0\,.
\ee%
Then \be\label{liminfb}%
|v(x)|\leq\,\frac{1}{|\,\sph\,\Th\,|}\,\,\int_{I} \,
\frac{|\,\na\,v(z)\,|}{|\,x-\,z\,|^{\,N-\,1}}\, dz\,,%
\ee%
for all $x\in\,I\,.$ Furthermore, if $\,v \in H^{1,\,t}(I)\,$
vanishes on $\,E\,$ in the $\,H^{1,\,t}(I)\,$ sense, then
\eqref{liminfb} holds a.e. in $\,I\,.$
\end{corollary}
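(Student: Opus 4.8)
The first assertion is an immediate consequence of Lemma~\ref{misas}. If $x\in E$ then $v(x)=0$ and there is nothing to prove; if $x\in I\setminus E$, Lemma~\ref{misas} gives \eqref{liminfb} with $|\sph\Th(x)|$ in place of $|\sph\Th|$ in the denominator, and since $|\sph\Th(x)|\ge|\sph\Th|>0$ by \eqref{liminf}, replacing $|\sph\Th(x)|$ by the smaller number $|\sph\Th|$ only weakens the estimate. Hence \eqref{liminfb} holds at every point $x\in I$.

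For the second assertion the plan is to apply a one-sided version of Lemma~\ref{misas} to a suitable approximating sequence and then pass to the limit. First I would record the following refinement, proved exactly as Lemma~\ref{misas}: if $g\in C^1(I)$ satisfies $g\ge 0$ on $E$, then
$$
\max\{-g(x),0\}\ \le\ \frac{1}{|\sph\Th|}\int_{I}\frac{|\na g(z)|}{|x-z|^{\,N-1}}\,dz,\qquad x\in I ;
$$
indeed, for $x\in I\setminus E$ with $g(x)<0$ and $\xi\in\Th(x)$, writing $p=x+t(\xi)\,\xi\in E$ one has $-g(x)\le g(p)-g(x)\le\int_{[x,p]}|\na g|$ (using $g(p)\ge 0$), and integrating over $\xi\in\Th(x)$ exactly as in Lemma~\ref{misas} gives the bound with $|\sph\Th(x)|$, which one then enlarges as above; the case $g(x)\ge 0$ is trivial. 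Symmetrically, if $g\le 0$ on $E$ the same right-hand side bounds $\max\{g(x),0\}$.

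Now, since $v$ vanishes on $E$ in the $H^{1,t}(I)$ sense, Definition~\ref{defourier} supplies sequences $g_n,h_n\in C^1(\overline I)$ with $g_n\to v$ and $h_n\to v$ in $H^{1,t}(I)$, $g_n\ge 0$ on $E$ and $h_n\le 0$ on $E$. By the one-sided estimates just stated,
$$
\max\{-g_n(x),0\}\le\frac{1}{|\sph\Th|}\int_{I}\frac{|\na g_n(z)|}{|x-z|^{\,N-1}}\,dz, \qquad \max\{h_n(x),0\}\le\frac{1}{|\sph\Th|}\int_{I}\frac{|\na h_n(z)|}{|x-z|^{\,N-1}}\,dz,
$$
for every $x\in I$. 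As $n\to\infty$, the left-hand sides converge in $L^t(I)$ to $\max\{-v,0\}$ and $\max\{v,0\}$ respectively, since composition with a Lipschitz map is continuous on $L^t$. On the right, the map $w\mapsto\int_I w(z)\,|x-z|^{-(N-1)}\,dz$ is convolution with the kernel $|\zeta|^{-(N-1)}$ restricted to the ball $\{|\zeta|<2\rho\}\supset I-I$, which is integrable because $N-1<N$; hence by Young's inequality this map is bounded on $L^t(I)$, and the right-hand sides converge in $L^t(I)$ to $\frac{1}{|\sph\Th|}\int_I|\na v(z)|\,|x-z|^{-(N-1)}\,dz$. Passing to a subsequence along which all these convergences hold pointwise a.e., we get that both $\max\{-v(x),0\}$ and $\max\{v(x),0\}$ are bounded a.e.\ by $\frac{1}{|\sph\Th|}\int_I|\na v(z)|\,|x-z|^{-(N-1)}\,dz$, and since $|v(x)|=\max\{\max\{v(x),0\},\max\{-v(x),0\}\}$ this yields \eqref{liminfb} for a.e.\ $x\in I$.

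I expect the step needing the most care to be this last passage to the limit, which rests on the boundedness on $L^t(I)$ of the potential operator with kernel $|x-z|^{-(N-1)}$; this is precisely why the exponent $N-1$ (rather than $N$) is the right one in \eqref{e62}, and it is the structural feature that makes the whole argument of Part~II go through. Everything else — the one-sided form of Lemma~\ref{misas} and the identification of the $H^{1,t}$ limits — is routine.
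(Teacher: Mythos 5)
Your proof is correct and follows the same basic strategy the paper has in mind (approximate $v$ by $C^1$ functions, apply the pointwise inequality, and pass to the limit using the boundedness of the potential operator), but you fill in a genuine gap that the paper's one-sentence justification leaves implicit. The crucial detail you supply is the one-sided variant of Lemma~\ref{misas}: since Definition~\ref{defourier} defines ``$v=0$ on $E$'' as the conjunction of ``$v\ge 0$ on $E$'' and ``$v\le 0$ on $E$'', one only gets \emph{two separate} approximating sequences in $C^1(\overline I)$, one bounded below on $E$ and one bounded above, and there is in general no single $C^1$ sequence that vanishes on $E$ and converges to $v$. Applying Lemma~\ref{misas} naively to an approximating sequence would therefore not be legitimate; your one-sided inequality bounding $\max\{-g,0\}$ for $g\ge 0$ on $E$ (and the symmetric one for $h\le 0$) is exactly the right substitute, and taking the pointwise maximum of the two bounds recovers \eqref{liminfb} a.e.\ after extraction of an a.e.\ convergent subsequence. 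You also replace the paper's citation of the $L^r\to L^{r^*}$ Sobolev mapping property of the Riesz kernel with the more elementary observation that, on a bounded ball, the truncated kernel $|\zeta|^{-(N-1)}\mathbbm{1}_{\{|\zeta|<2\rho\}}$ is simply integrable, so Young's inequality already gives $L^t\to L^t$ boundedness — which is all the limit passage requires; this is a mild but real simplification.
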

Note that the estimate \eqref{liminfb} is obvious if $x\in \,E\,.$
The last assertion in the corollary follows from well known results
on the continuity of the linear map defined by convolution with the
kernel $\,|\,z\,|^{-\,(N-\,1)}\,$. Actually, this map is continuous
from $\,\L^r\,$ to $\,L^{r^*}\,$ where $1/r^{r^*}=1/r-\,1/n\,.$ See,
for instance, \cite{stein}, Chap.V.\par%
Lemma \ref{lestamfourier} follows by appealing to the "volumetric"
estimate
$$(1/N) \,|\sph\,\Th(x)| \, (\textrm{diam}\,I)^N \geq\,|E|\,.$$
%
%

\vspace{0.2cm}

Next, we prove theorem \ref{teo-bvc}. Some details are left to the
reader. By "cone" (in any dimension) we mean a right circular cone,
truncated by a sphere with center the vertex of the cone. For
instance, the $\,(N-\,1)-$dimensional "truncated cones" with vertex
$\,y=\,0\,$ have the form
\begin{equation}\label{cumum}
C_{\rho,\,\om}=\,\{\,x\in\,\R^N\,:\,x_1\geq\,0\,,\,x_N=\,0\,,
\,|\,x\,| \leq\,\rho\,,\, |\,x\,|^2 \leq\, (1+\,\om)\,{x_1}^2\,\}\,,
\end{equation}
where $\,\rho\,$ and $\,\om \,$ are positive constants. Note that,
by setting $\,x=\,(x_1,\,x',\,x_N\,)\,,$ the above
condition means that $\,|\,x'\,|^2 \leq\,\,\om\,\,x^2_1\,.$\par%
\begin{definition}\label{s-cone}
We say that a point $\,y\in\,\pa\,\Om\,$ satisfies an
$\,(N-1)-$dimensional external cone property if there exists an
$\,(N-1)-$dimensional cone $\,C\,$ with vertex in $\,y\,$ and
contained in $\,\complement \,\Om\,.$ Similarly, we define
generalized $\,(N-1)-$dimensional cone property at the point
$\,y\,$, by replacing the cone $\,C\,$ by a Lipschitz image of
itself.
\end{definition}

\vspace{0.2cm}

The proof of theorem \ref{teo-bvc} follows immediately from theorem
\ref{teo-nopub} and corollary \ref{misacorol}, by a small
modification of the argument used to prove the theorem
\ref{teo-unpub}. As above, we appeal to the corollary
\ref{misacorol}. Roughly speaking, as for the theorem
\ref{teo-unpub}, we would like to show that there is a positive
lower bound $\,|\,\sph\,\Th\,|\,$ for the values of the solid angles
$\,|\,\sph\,\Th(x)\,|\,$ from which the set $\,E_\ro\,$ can be
"watched" from points $\,x\in I(y,\,\ro)\,.$ Clearly, this is false
in general, since (for instance) $\,x\,$ and $\,E_\ro \,$ may belong
to a $\,(N-\,1)-$dimensional hyperplane. However the same argument
applies here. Let's prove that equation \eqref{e62} holds for a
positive $\,\sigma(\ro)\,,$ independent of $\,\ro\,.$ To show this
claim, note that geometry and estimates for a generical value
$\,\ro\,$ can immediately be brought back to the case $\,\ro=\,1\,$,
by a suitable homothety. Next, note that the estimates in play are
invariant under Lipschitz maps, up to multiplication by positive
constants. So, we may fold up the original  $\,(N-1)-$ dimensional
cone into an "non flat" $\,(N-1)-$ dimensional "twisted cone", which
contains $\,N\,$ distinct pieces of surface, each one orthogonal to
a single $\,x_i\,$ direction, $\,i=\,1,...,\,N\,.$ Now, from each
point $\,x \in I(y,\,1)\,,$ one "watches", at least, one of the
above pieces of surface, from a positive solid angle
$\,|\,\sph\,\Th(x)\,|\,.$ Moreover, the lower bound
$\,|\,\sph\,\Th\,|\,$ of the values of solid angles is positive.
This proves theorem \ref{teo-bvc}.\par%
Note that it would be sufficient to prove that the lower bounds
behaves like $\,\sigma(\rho)\,$ in equation \eqref{densasr}, as
$\,\ro\,$ goes to zero.
\section{A recursive estimate for the local oscilation}\label{regcrit}
In the sequel, to avoid unessential devices, we assume in equations
\eqref{zeroquatro} and \eqref{zerocinco} that $\,p_0=\,0\,$. One
easily extends the proof to the general situation by appealing to
\eqref{doistres}. This leads to the appearance of "lower order"
terms, easy to control.

\vspace{0.2cm}

We prove the theorem \ref{teo-unpub} by showing that
\eqref{zerodezasseis} holds. More precisely, we fix a couple of
positive constants $\,\rho_0\,$ and $\,m\,,$ and prove that
$$
\lim_{x \rightarrow \,y} u_{m,\,\rho_0}(x)=\,m\,.
$$
The proof of the second equation \eqref{zerodezasseis} is absolutely
identical. Alternatively, we may appeal to the remark \ref{rem-1.1},
to refer the proof to that of the first equation.\par
\vspace{0.3cm}

In the sequel the "large" ball $\,\Si\,$, the point $\,y\in \pa
\,\Om\,$, and the positive constants $m$ and $\,\rho_0\,$ are
assumed to be fixed, once and for all. The capacitary potential
$u_{m,\,\rho_0}(x)$ of $\,E_{\rho_0} \,$ will be simply denoted by
$u(x)$. Furthermore, without loss of generality, we place the
origin at $\,y\,$, so%
$$
y=\,0\,.
$$
We set $\,I(r)=\,I(0,\,r)\,.$
The following result is well known.
\begin{lemma}\label{L51}
One has
\begin{equation}\label{52}
\|\,v\,\|_{t^*,\,r} \leq\,c\,\|\,\na\,v\,\|_{t,\,r}\,,\quad \forall
\, \,v \in H^{1,\,t}_0(r)\,,
\end{equation}
where $ \frac{1}{t^*}=\,\frac{1}{t}-\,\frac{1}{N}\,. $
\end{lemma}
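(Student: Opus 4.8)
The plan is to deduce \eqref{52} from the boundedness of the Riesz potential of order one, exactly the tool already recalled in the remark following Corollary \ref{misacorol}. Since, by definition, $\,C^1_0(\overline{I(0,\,r)})\,$ is dense in $\,H^{1,\,t}_0(r)\,$ and both sides of \eqref{52} are continuous for the $\,H^{1,\,t}\,$ norm, it suffices to prove the estimate for $\,v\in\,C^1_0(\overline{I(0,\,r)})\,$ and then pass to the limit. Extending such a $\,v\,$ by zero outside $\,I(0,\,r)\,,$ we may regard it as a $\,C^1\,$ function on $\,\R^N\,$ with compact support, and it is then enough to bound $\,\|\,v\,\|_{t^*,\,\R^N}\,$, which equals $\,\|\,v\,\|_{t^*,\,r}\,.$

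First I would record the pointwise representation
\[
|\,v(x)\,|\leq\,\frac{1}{N\,V_N}\,\int_{\R^N}\,\frac{|\,\na\,v(z)\,|}{|\,x-\,z\,|^{\,N-\,1}}\,dz\,,\qquad x\in\,\R^N\,,
\]
where $\,V_N\,$ is the volume of the unit ball. This is obtained precisely as in Lemma \ref{misas}: for each unit vector $\,\xi\,$ one integrates $\,\frac{d}{dr}\,v(x+\,r\,\xi)\,$ from $\,0\,$ to $\,+\,\infty\,$ (the boundary term at infinity vanishing because $\,v\,$ has compact support), obtaining $\,|v(x)|\leq\,\int_0^{\,\infty}\,|\,\na\,v(x+\,r\,\xi)\,|\,dr\,$; averaging over $\,\xi\,$ on the unit sphere $\,S\,$ and using $\,|x-\,z|^{\,N-\,1}\,dS\,dr=\,dz\,$ gives the stated bound, the solid angle $\,|\,\sph\,\Th\,|\,$ of Lemma \ref{misas} now being replaced by the full value $\,N\,V_N\,.$

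Finally I would invoke the Hardy--Littlewood--Sobolev inequality: for $\,1<\,t<\,N\,$ the convolution operator $\,f\mapsto\,|\,z\,|^{-\,(N-\,1)}*\,f\,$ maps $\,L^t(\R^N)\,$ continuously into $\,L^{t^*}(\R^N)\,,$ with $\,\frac{1}{t^*}=\,\frac{1}{t}-\,\frac1N\,$; see \cite{stein}, Chap.~V. Applying this with $\,f=\,|\,\na\,v\,|\,$ and combining with the pointwise bound above yields $\,\|\,v\,\|_{t^*,\,r}\leq\,c\,\|\,\na\,v\,\|_{t,\,r}\,$, the constant $\,c\,$ depending only on $\,N\,$ and $\,t\,$ (in particular not on $\,r\,$, the two estimates being translation and dilation covariant). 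I do not anticipate any genuine obstacle here; the only points deserving a word are the $\,r$-independence of the constant and the density step, both immediate from the definition of $\,H^{1,\,t}_0(r)\,.$ If one prefers to avoid quoting \cite{stein}, the classical alternative is to prove the case $\,t=\,1\,$ by Gagliardo's lemma (the iterated fundamental theorem of calculus together with the generalized H\"older inequality for a product of $\,N\,$ functions, each independent of one coordinate) and then to bootstrap to $\,1<\,t<\,N\,$ by applying the $\,t=\,1\,$ case to $\,|v|^{\gamma}\,$ with $\,\gamma=\,\frac{(N-\,1)\,t}{N-\,t}\,$ and using H\"older's inequality with exponents $\,t\,$ and $\,t/(t-\,1)\,.$
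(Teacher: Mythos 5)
Your proof is correct. Note, however, that the paper offers no proof of Lemma~\ref{L51} at all: it is simply declared ``well known,'' being nothing but the Sobolev embedding $H^{1,t}_0(I(0,r))\hookrightarrow L^{t^*}(I(0,r))$ with the constant independent of $r$ (clear after zero-extension to $\R^N$, or by dilation covariance). Your derivation---density of $C^1_0$, zero extension, the pointwise Riesz bound $|v(x)|\leq (N V_N)^{-1}\int |\na v(z)|\,|x-z|^{1-N}\,dz$, and Hardy--Littlewood--Sobolev---is a standard, complete argument and dovetails with the toolkit the paper already uses (the same pointwise representation appears in Lemma~\ref{misas}, and \cite{stein}, Chap.~V is cited after Corollary~\ref{misacorol} for exactly this $L^t\to L^{t^*}$ boundedness). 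The only caveat is that HLS requires $t>1$; your Gagliardo alternative, which starts from $t=1$ and bootstraps to $1<t<N$ by applying the $t=1$ estimate to $|v|^{\gamma}$ with $\gamma=\frac{(N-1)t}{N-t}$, closes that gap and is equally acceptable.
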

We define sets
\be\label{asbas}%
\,B(k,\,r)= \,\{x \in\,\Om(y,\,r)\,: u(x) \leq\,k\,\}\,,%
\ee%
and introduce the cut-off function
\be\label{fixis}%
\phi(x)=\, \left\{\begin{array}{ll}\dy \,1
&\dy \mbox{ if }\ |x|\leq\,\rho\,,\\
\dy \frac{R-\,|x|}{R-\,\rho} & \dy \mbox{ if }\ \rho
\leq\,|\,x\,|\leq\,R\,,\\%
\,0 &\dy \mbox{ if }\ R \leq\,|x|\,.
\end{array}\right.\ee
In the sequel, $\,0<\,\rho <\,R <\,\ro_0\,.$ For brevity, we set
$$
B(k)=\,B(k,\,R)\,.
$$
The following kind of estimates is well known.
\begin{lemma}\label{L52}
Assume that $\,0<\,\rho\, <R <\,\overline{R}\,,$ and
$\,0<\,h<\,k\,.$ Let be $\,v \in \, H^{1,\,t}(\overline{R})\,.$
Then, the following estimates hold.
\be\label{exle64}%
\left\{\begin{array}{ll}\dy
\int_{B(h,\,\rho)} (h-\,u)^t \,dx\leq\\%
c\,\Big(\,(R-\,\ro)^{-\,t} \int_{B(k)} \,(k-\,u)^t \,dx\,+
\int_{B(k)} \, |\,\na\,u\,|^t \,\phi^{\,t}
\,dx\,\Big)\,|B(k)|^\frac{t}{N}\,,\\
\\
\dy |B(h,\,\rho)|\,(k-\,h)^t \leq \, \int_{B(h,\,\ro)} \,(k-\,u)^t
\,dx\, \leq \, \int_{B(k)} \,(k-\,u)^t \,dx\,.
\end{array}\right.\ee
\end{lemma}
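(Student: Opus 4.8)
The plan is to derive both estimates by the standard Caccioppoli/Sobolev machinery adapted to the obstacle problem, using that $u=u_{m,\rho_0}$ is a supersolution in $\Si$ (lemma \ref{lemumseis}) and solves the equation in $\Si-E_{\rho_0}$, and that $u=m$ on $E_{\rho_0}$ (lemma \ref{lemumcinco}). First I would establish the second chain of inequalities, which is purely elementary: on $B(h,\rho)$ one has $u\le h<k$, so $(k-h)^t\le (k-u)^t$ pointwise, and integrating over $B(h,\rho)$ gives $|B(h,\rho)|(k-h)^t\le\int_{B(h,\rho)}(k-u)^t\,dx$; since $\rho<R$ and $h<k$ force $B(h,\rho)\subset B(k,\rho)\subset B(k,R)=B(k)$, monotonicity of the integral of the nonnegative integrand $(k-u)^t$ over the enlarging domains yields $\int_{B(h,\rho)}(k-u)^t\,dx\le\int_{B(k)}(k-u)^t\,dx$.

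For the first estimate, the main work is a Caccioppoli inequality. I would test the supersolution inequality $\fa(u,\psi)\ge 0$ (valid for $\psi\in\V_0(\Si)$, $\psi\ge0$) with $\psi=(k-u)_+\,\phi^t$ where $\phi$ is the cut-off \eqref{fixis}; note $\psi\ge0$, $\psi\in\V_0$, and $\psi$ is supported in $B(k,R)\cap\{|x|\le R\}$. Since $\na\psi = -\mathbbm{1}_{\{u<k\}}\na u\,\phi^t + t(k-u)_+\phi^{t-1}\na\phi$, the inequality $\int A(\na u)\cdot\na\psi\ge0$ rearranges, using \eqref{zeroquatro} with $p_0=0$ (as assumed in this section) on the first term and \eqref{zerocinco} on the second, to
\[
a\int_{B(k)} |\na u|^t\phi^t\,dx \le t\,a^{-1}\int_{B(k)} |\na u|^{t-1}(k-u)\phi^{t-1}|\na\phi|\,dx.
\]
Young's inequality on the right (with exponents $t/(t-1)$ and $t$), absorbing the $|\na u|^t\phi^t$ term into the left side and using $|\na\phi|\le (R-\rho)^{-1}$, gives the Caccioppoli bound
\[
\int_{B(k)} |\na u|^t\phi^t\,dx \le c\,(R-\rho)^{-t}\int_{B(k)}(k-u)^t\,dx.
\]
Then on the set $B(h,\rho)\subset\{\phi=1\}$, apply Lemma \ref{L51} to $v=(k-u)_+\phi\in H^{1,t}_0(R)$: $\|(k-u)_+\phi\|_{t^*,R}\le c\|\na((k-u)_+\phi)\|_{t,R}$, and $\na((k-u)_+\phi)$ is controlled by $|\na u|\phi + (k-u)_+|\na\phi|$ on $B(k)$. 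Finally, Hölder's inequality on $B(h,\rho)$ with exponents $t^*/t$ and its conjugate converts the $L^{t^*}$ bound into the $L^t$ bound with the extra factor $|B(k)|^{t/N}$ (since $1-t/t^*=t/N$, using $B(h,\rho)\subset B(k)$). Collecting the pieces produces exactly \eqref{exle64}.

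The step I expect to be the main obstacle is the clean bookkeeping of the Caccioppoli estimate: one must be careful that the test function $(k-u)_+\phi^t$ is admissible for the supersolution inequality (nonnegativity and membership in $\V_0(\Si)$, which holds because $\phi$ is compactly supported in $\Si$ and $(k-u)_+\in\V$), that the chain rule \eqref{rq} applies to $(k-u)_+$, and that the absorption via Young's inequality leaves constants depending only on $t,N,a$. The role of the "$p_0=0$" reduction stated at the start of this section is precisely to avoid the lower-order term $a\,p_0^{\,t}$ from \eqref{doistres} in the coercivity bound; otherwise one carries an additional harmless term, as the text remarks. Everything else — the comparison inclusions among the sets $B(\cdot,\cdot)$ and the final Hölder step — is routine.
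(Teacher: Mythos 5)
Your proof of the second chain of inequalities is exactly right and coincides with the paper's one-line remark that ``the second estimate is obvious'': on $B(h,\rho)$ one has $u\le h<k$, hence $(k-h)^t\le(k-u)^t$; then monotonicity of the integral over $B(h,\rho)\subset B(k,\rho)\subset B(k)$, where $(k-u)_+\ge0$, gives the rest.

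For the first estimate your toolkit is right but your decomposition misreads what the lemma asserts, and it does work the paper explicitly reserves for the next statement. The first inequality in \eqref{exle64} leaves the term $\int_{B(k)}|\nabla u|^t\phi^t\,dx$ \emph{on the right-hand side}; it is a pre-Caccioppoli estimate that requires only the truncation bound $(h-u)\le(k-u)_+\phi$ on $B(h,\rho)$ (where $\phi=1$), the Sobolev inequality of Lemma \ref{L51} applied to $(k-u)_+\phi\in H^{1,t}_0(R)$, the chain rule \eqref{rq}, the pointwise bound on $\nabla\big((k-u)_+\phi\big)$, and one H\"older step with exponents $t^*/t$ and $N/t$ (using $1-t/t^*=t/N$ and $|B(h,\rho)|\le|B(k)|$). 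No use is made of the fact that $u$ is a supersolution or solves the variational inequality. The Caccioppoli inequality
\[
\int_{B(k)}|\nabla u|^t\phi^t\,dx\le c\,(R-\rho)^{-t}\int_{B(k)}(k-u)^t\,dx
\]
that you derive by testing the supersolution inequality with $(k-u)_+\phi^t$ is precisely the content of the very next statement, Theorem \ref{T51}, which the paper proves separately; combining it with L52 yields Lemma \ref{L53}, not L52. So your ``main work'' establishes a strictly stronger conclusion (L53), and the claim that ``collecting the pieces produces exactly \eqref{exle64}'' is off: once you invoke your Caccioppoli bound, the $\int|\nabla u|^t\phi^t$ term disappears from the right-hand side. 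Nothing you wrote is false, but you should disentangle the two estimates and note that L52 is where Sobolev/H\"older enter and T51 is where the variational inequality and the structure conditions on $A$ (with $p_0=0$) enter. Finally, be aware that the paper does not actually supply a proof of this lemma; it refers to inequality (6.12) of reference \cite{c2} for the first estimate and declares the second obvious, so matching the paper's logical architecture (L52, then T51, then L53) is the substantive point a proof here should respect.
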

For the proof of the first estimate see, for instance, the proof of
the first inequality (6.12) in reference \cite{c2}. The second
estimate \eqref{exle64} is obvious.
\begin{theorem}\label{T51}
Let $\,\phi\,$ be given by \eqref{fixis}. Then, for each real $k$,%
\be\label{eq54}%
\int_{B(k,\,R)}\, |\,\na\,u\,|^t \,\phi^{\,t}\,dx \leq\,
c\,(R-\,\ro)^{-\,t} \int_{B(k,\,R)} \,|\,u-\,k\,|^{\,t} \,dx\,.%
\ee
\end{theorem}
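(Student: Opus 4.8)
The plan is the standard Caccioppoli (energy) estimate for the obstacle problem, adapted to the capacitary potential $u=u_{m,\rho_0}$. Recall from Lemma \ref{lemumseis} that $u$ is a supersolution in $\Si$ and solves the equation $\fa(u,v)=0$ in $\Si-E_{\rho_0}$ for all $v\in H^{1,t}_0(\Si-E_{\rho_0})$. The idea is to test this with $v=-(u-k)^-\phi^t=\min\{u-k,0\}\,\phi^t$, where $(u-k)^-=\max\{k-u,0\}$, and $\phi$ is the cutoff from \eqref{fixis}. First I would check that this $v$ is an admissible test function: $\phi$ has compact support in $I(R)\subset\subset\Si$, and since $u\le m$ a.e. with $u=m$ on $E_{\rho_0}$ (Lemma \ref{lemumcinco}), on the set where $u<k$ we have $x\notin E_{\rho_0}$ provided $k\le m$; for $k>m$ the set $B(k,R)$ differs from $\Om(y,R)$ only by a null-gradient contribution, so the estimate reduces to the variational-solution case. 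Thus $v\in H^{1,t}_0(\Si-E_{\rho_0})$ (or one argues directly with the supersolution inequality, noting $v\le 0$ so $\fa(u,v)\ge 0$, i.e. $\int A(\na u)\cdot\na v\,dx\ge 0$; since $u$ is also a solution off $E$, this gives $\int A(\na u)\cdot\na v\,dx=0$ on the relevant set).

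The second step is the computation. Using \eqref{rq}, on $B(k,R)$ one has $\na(u-k)^-=-\na u$, hence
\[
\na v=-\phi^t\,\na u-t(u-k)^-\phi^{t-1}\na\phi,
\]
all supported in $B(k,R)$. Substituting into $\int A(\na u)\cdot\na v\,dx=0$ yields
\[
\int_{B(k,R)}A(\na u)\cdot\na u\,\phi^t\,dx
=-t\int_{B(k,R)}A(\na u)\cdot\na\phi\,(u-k)^-\phi^{t-1}\,dx.
\]
Now I would invoke the coercivity/growth bounds \eqref{zeroquatro}, \eqref{zerocinco} (with $p_0=0$ as assumed at the start of Section \ref{regcrit}) on the left and right sides: $A(\na u)\cdot\na u\ge a|\na u|^t$ and $|A(\na u)|\le a^{-1}|\na u|^{t-1}$. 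This gives
\[
a\int_{B(k,R)}|\na u|^t\phi^t\,dx
\le \frac{t}{a}\int_{B(k,R)}|\na u|^{t-1}|\na\phi|\,|u-k|\,\phi^{t-1}\,dx.
\]

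The third step is Young's (or Hölder's) inequality to absorb the gradient term. Writing the integrand on the right as $\bigl(|\na u|\phi\bigr)^{t-1}\cdot\bigl(|\na\phi|\,|u-k|\bigr)$ and applying Young with exponents $\tfrac{t}{t-1}$ and $t$,
\[
|\na u|^{t-1}\phi^{t-1}\cdot|\na\phi|\,|u-k|
\le \frac{a^2}{2t}\,|\na u|^t\phi^t+C(t,a)\,|\na\phi|^t\,|u-k|^t,
\]
so that the $\tfrac{a^2}{2t}$-term is absorbed into the left-hand side. Since $|\na\phi|\le (R-\rho)^{-1}$ a.e. by \eqref{fixis}, we are left with
\[
\frac{a}{2}\int_{B(k,R)}|\na u|^t\phi^t\,dx\le c\,(R-\rho)^{-t}\int_{B(k,R)}|u-k|^t\,dx,
\]
which is exactly \eqref{eq54} after dividing by $a/2$ and relabelling the constant (constants depending only on $t,N,a$, consistent with the paper's convention). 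The main obstacle I anticipate is the bookkeeping at the very first step — justifying that $v$ is a legitimate test function across the obstacle set $E_{\rho_0}$, i.e. that the term $\int A(\na u)\cdot\na v$ genuinely vanishes (rather than merely being $\ge 0$ from the supersolution property, which would already suffice to absorb but should be stated cleanly); once past that, the remaining manipulations are the routine Caccioppoli argument, identical in spirit to the first inequality (6.12) of \cite{c2} cited just below Lemma \ref{L52}.
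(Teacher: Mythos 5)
Your overall plan (test with a cutoff truncation, absorb the gradient term by Young, use $|\na\phi|\le(R-\rho)^{-1}$) is the same as the paper's and reaches the right estimate, but the way you set up the first step has real problems that the paper avoids. The paper simply puts $v=u-\phi^t\min(u-k,0)=u+\phi^t\max(k-u,0)$ into the variational inequality \eqref{Eq55}. Since $v\ge u$ and $u=m$ on $E_{\rho_0}$ (Lemma \ref{lemumcinco}), one has $v\ge m$ on $E_{\rho_0}$, and $v-u$ has compact support in $I(R)\subset\subset\Si$, so $v\in\K_m(\Si)$ for \emph{every} real $k$, with no case distinction. This gives \eqref{Eq56} at once. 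In particular, there is no need to invoke the equation on $\Si-E_{\rho_0}$ from Lemma \ref{lemumseis}, no need to argue that the test function ``vanishes on $E$,'' and no need to hand-wave the case $k>m$ (where your test function is manifestly nonzero on $E$). Note also that even when $k\le m$, knowing $v=0$ on $E$ in the $H^{1,t}(\Si)$ sense does \emph{not} automatically give $v\in H^{1,t}_0(\Si-E_{\rho_0})$; that is a spectral synthesis/stability property of $E$ that is nowhere assumed, so the equation-based route is a genuine gap, not just a bookkeeping nuisance.

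Two additional sign errors in your computation deserve flagging, even though they happen to cancel. First, Definition \ref{emcima} gives $\fa(u,\psi)\ge 0$ for $\psi\ge 0$; therefore for $\psi\le 0$ one gets $\fa(u,\psi)\le 0$, not $\ge 0$ as you wrote. (Conveniently, $\le 0$ is exactly the direction needed, so the correctly stated supersolution property \emph{does} furnish a clean alternative to the variational inequality, valid for all $k$ --- that would be a legitimate way to salvage your argument.) Second, with $(u-k)^-=\max\{k-u,0\}$ and $v=-\phi^t(u-k)^-$, one has $\na(u-k)^-=-\na u$ on $\{u<k\}$, hence $\na v=+\phi^t\na u-t(u-k)^-\phi^{t-1}\na\phi$, with a plus sign in front of $\phi^t\na u$. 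Your final absorption step (Young versus the paper's H\"older) is a cosmetic difference; both work.
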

\begin{proof}
By the definition of $u_{m,\,\rho_0}(x)\,$ one has
\begin{equation}\label{Eq55}
\int_{\Si}  \,\big(\,A(\na\,u\,),\,\na\,(\,v-\,u\,)\,\big) \,dx
\geq\,0\,, \quad \forall \quad v \in\,\K_m(\Sigma)\,.
\end{equation}
where (recall \eqref{zeroonze})
$$
\K_m(\Sigma)=\,\big\{\,v \in\,H^{1,\,t}_0(\Si)\,:\,v \geq\,m \quad
\mbox{ on }\quad E_{\ro_0}\,\big\}\,.
$$
By setting in equation \eqref{Eq55} $\,v=\,u-\,\phi^{\,t}
\,\min(u-\,k,\,0)\,$ it follows that
\begin{equation}\label{Eq56}
\int_{B(k)}  \,\big(\,A(\na\,u\,),\,\na\,u\,\,\big) \,\phi^t \,dx
\leq\,-t\,\int_{B(k)}
\,\big(\,A(\na\,u\,),\,\na\,\phi\,\big)\,(\,u-\,k)\,\phi^{\,t-\,1}
\,dx.
\end{equation}
From \eqref{Eq56}, by appealing to H\H older's inequality and to
properties enjoyed by $\,\phi\,$ and $\,A(p)\,,$ we show that
\be\label{eq57}\begin{array}{ll}\dy%
a\, \int_{B(k)} \, |\,\na\,u\,|^t \,\phi^{\,t} \,dx \leq \\
\\
t\,a^{\,t-1}\,\Big(\,\int_{B(k)} \,|\,\na\,u\,|^t \,\phi^{\,t}
\,dx\,\Big)^{\frac{t-\,1}{\,t}}\, \Big(\, \int_{B(k)} \,
|\,u-\,k\,|^t \,|\,\na\,\phi\,|^{\,t}
\,dx\,\Big)^{\frac{\,1}{\,t}}\,.%
\ea\ee%
Equation \eqref{eq57} leads to
$$
\int_{B(k)} \, |\,\na\,u\,|^t \,\phi^{\,t} \,dx \leq\,c\,
\int_{B(k)} \, |\,u-\,k\,|^t \,|\,\na\,\phi\,|^{\,t} \,dx\,.
$$
Since $\,|\,\na\,\phi\,| \leq\,(R-\,\ro\,)^{\,-1}\,,$ the thesis
follows.
\end{proof}
The next result follows by appealing to theorem \ref{T51} and lemma
\ref{L52}.
\begin{lemma}\label{L53}
Assume that $\,0<\,\rho\, <R\,,$ and $\,0<\,h<\,k\,.$ The following
estimates hold.
\be\label{exle64}%
\left\{\begin{array}{ll}\dy \int_{B(h,\,\rho)} (h-\,u)^t \,dx\leq \,
c_1\,|B(k)|^\frac{t}{N}\,(R-\,\ro)^{-\,t} \int_{B(k)} \,(k-\,u)^t \,dx\,,\\
\\
\dy |B(h,\,\rho)|\,(k-\,h)^t \leq \, \int_{B(k)} \,(k-\,u)^t \,dx\,.
\end{array}\right.\ee
\end{lemma}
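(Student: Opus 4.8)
The plan is to obtain both inequalities by feeding the gradient bound of Theorem~\ref{T51} into the two estimates of Lemma~\ref{L52}; no new idea is required. First I would record the trivial set inclusions used throughout: since $\,0<h<k\,$ and $\,0<\rho<R\,$, one has $\,B(h,\rho)\subseteq B(k,R)=B(k)\,$, and on $\,B(k)\,$ the potential satisfies $\,u\leq k\,$, so there $\,|u-k|^t=(k-u)^t\,$. In particular, applying Theorem~\ref{T51} with this same $\,k\,$ gives $\,\int_{B(k)}|\na u|^t\phi^t\,dx\leq c\,(R-\rho)^{-t}\int_{B(k)}(k-u)^t\,dx\,$, with $\,\phi\,$ the cut-off of \eqref{fixis}.

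The second estimate is immediate and does not even use Theorem~\ref{T51}: for $\,x\in B(h,\rho)\,$ one has $\,u(x)\leq h\,$, hence $\,k-u(x)\geq k-h>0\,$; integrating over $\,B(h,\rho)\,$ yields $\,|B(h,\rho)|\,(k-h)^t\leq\int_{B(h,\rho)}(k-u)^t\,dx\,$, and enlarging the domain of integration from $\,B(h,\rho)\,$ to $\,B(k)\,$ (using the inclusion above and $\,(k-u)^t\geq0\,$ on $\,B(k)\,$) only increases the integral. This is exactly the second line of the statement, and it coincides with the second inequality of Lemma~\ref{L52}.

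For the first estimate I would substitute the bound of Theorem~\ref{T51} into the first inequality of Lemma~\ref{L52}. After this substitution the two summands inside the parentheses are each a constant multiple of $\,(R-\rho)^{-t}\int_{B(k)}(k-u)^t\,dx\,$, so they collapse into a single term; absorbing the numerical factors into one constant $\,c_1=c_1(t,N,a,p_0)\,$ produces precisely $\,\int_{B(h,\rho)}(h-u)^t\,dx\leq c_1\,|B(k)|^{t/N}(R-\rho)^{-t}\int_{B(k)}(k-u)^t\,dx\,$. The argument is pure bookkeeping; the only point deserving a moment's attention is that Theorem~\ref{T51} must be invoked with the same radii $\,\rho<R\,$ and the same cut-off $\,\phi\,$ as in Lemma~\ref{L52}, so that the quantity $\,\int_{B(k)}|\na u|^t\phi^t\,dx\,$ occurring in both statements is literally the same. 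There is no genuine obstacle.
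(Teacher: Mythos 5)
Your proof is correct and is exactly the route the paper intends: the paper states that Lemma~\ref{L53} "follows by appealing to theorem \ref{T51} and lemma \ref{L52}," and your substitution of \eqref{eq54} into the first inequality of \eqref{exle64} from Lemma~\ref{L52}, followed by absorbing constants, is precisely that. The second inequality is unchanged from Lemma~\ref{L52}, as you note.
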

For brevity we set
\be\label{exle64b}%
\left\{\begin{array}{ll}\dy%
u(h,\,\rho)=\,\int_{B(h,\,\rho)} (h-\,u)^t \,dx\,,\\
\\
\dy b(h,\,\rho)=\,|B(h,\,\rho)|\,.
\end{array}\right.\ee
So, equation \eqref{L53} takes the form
\be\label{exle64b}%
\left\{\begin{array}{ll}\dy%
u(h,\,\rho)\leq \,
c_1\,b(k,\,R)^\frac{t}{N}\,(R-\,\ro)^{-\,t}\,u(k,\,R)\,,\\
\\
\dy b(h,\,\rho)\,(k-\,h)^t \leq \,u(k,\,R)\,.
\end{array}\right.\ee
Next, we define
\be\label{psistamp}%
\psi (h,\,\rho)=\,u(h,\,\rho)^{\,\theta \frac{N}{t}} \, b(h,\,\rho)\,,%
\ee%
where
$$
\theta=\,\frac12 +\,\sqrt{\frac{1}{4}+\,\frac{t}{N}}>\,1\,.
$$
Straightforward calculations show that
\be\label{psisminus}%
\psi(h,\,\rho)\leq\,c_1^{\,\frac{N}{t}\,\theta\,}\,
\,\frac{1}{(\,R-\,\ro\,)^{\,N\,\theta}}\,
\frac{1}{(\,k-\,h\,)^{\,t}}\,\psi(k,\,R)^\theta \,.%
\ee%
Note that
$\,\frac{t}{N}+\,\theta=\,\theta^{\,2}\,.$ We point out that the above choice
of $\,\theta\,$ is the only choice possible to get an estimate of the form \eqref{psisminus}.\par%

\vspace{0.2cm}

\begin{lemma}\label{L55bis}
Let be $\,0<\,r_0 \leq\,\frac{\,\ro_0}{2}\,,$ $\,k_0\in\,\R\,,$ and
$\,d>\,0\,.$ Define, in correspondence to each no-negative integer
$m$, the following quantities:
\be\label{e514bis}%
\left\{\begin{array}{ll}%
r_m=\,\frac{r_0}{2}+\,\frac{r_0}{2^{m+1}}\,,\\
\\
k_m=\,k_0-\,d +\,\frac{d}{2^{\,m}}\,,%
\ea\right.%
\ee
\be\label{e515bis}%
\left\{\begin{array}{ll}%
a_m=\,|B(k_m,\,r_m)|\,,\\
\\
u_m=\,\int_{B(k_m,\,r_m)} \,(k_m-\,u)^t \,dx\,,%
\ea\right.%
\ee
and
\be\label{e516biss}%
\psi_m=\,u_m^{\,\theta \frac{N}{t}} \, b_m\,.%
\ee%
Then
\be\label{demeias}%
\big|B(k_0-\,d,\,\frac{r_0}{2})\big|=\,0%
\ee%
if
\be\label{ddt}%
d \geq c_1^{\,\frac{N \,\theta}{\,t^2} }\, \frac{
2^{\frac{\beta\,\theta}{t}}}{(2\,r_0)^{\frac{\,N\,\theta}{t}}}\,
\,\psi_0^{\frac{\,\theta-1}{t}}\equiv \,
C\,\frac{\psi_0^{\frac{\,\theta-1}{t}}}{r_0^{\frac{\,N\,\theta}{t}}}\,.%
\ee%
\end{lemma}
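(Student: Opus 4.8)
The plan is to run the classical De Giorgi--Ladyzhenskaya--Ural'tseva fast geometric convergence iteration on the quantities $\psi_m$. First I would specialize the recursive estimate \eqref{psisminus} to the consecutive pairs $(h,\rho)=(k_{m+1},r_{m+1})$ and $(k,R)=(k_m,r_m)$; all the radii involved lie in $(0,\ro_0)$, so this substitution is legitimate. By the explicit choices \eqref{e514bis} one has $r_m-r_{m+1}=r_0\,2^{-(m+2)}$ and $k_m-k_{m+1}=d\,2^{-(m+1)}$, so \eqref{psisminus} collapses into a recursion of the form
\be
\psi_{m+1}\leq C_0\,B^{\,m}\,\psi_m^{\,\theta},\qquad B=2^{\,N\theta+t},\qquad C_0=c_1^{\,\frac{N\theta}{t}}\,\frac{2^{\,2N\theta+t}}{r_0^{\,N\theta}\,d^{\,t}}\,.
\ee

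The second step is the standard observation that, since $\theta>1$, such a recursion propagates smallness: if $\psi_0\leq C_0^{-1/(\theta-1)}\,B^{-1/(\theta-1)^2}$, then an immediate induction gives $\psi_m\leq B^{-m/(\theta-1)}\psi_0$ for every $m$, so that $\psi_m\to 0$ at the geometric rate $2^{-m(N\theta+t)/(\theta-1)}$. It then remains to unravel the threshold: solving $\psi_0\leq C_0^{-1/(\theta-1)}B^{-1/(\theta-1)^2}$ for $d$ — that is, expressing $C_0^{-1/(\theta-1)}B^{-1/(\theta-1)^2}$ through $c_1,r_0,d$ and taking $t$-th roots — produces precisely a bound of the shape $d\geq C\,\psi_0^{(\theta-1)/t}/r_0^{N\theta/t}$, which is \eqref{ddt}; the number $\beta$ there is just the $N,t$-constant that bundles together $2N\theta+t$ and $(N\theta+t)/(\theta-1)$, and $C$ depends only on $t,N,a$ (recall $p_0=0$). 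Hence, under hypothesis \eqref{ddt}, $\psi_m\to 0$.

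The last step converts $\psi_m\to 0$ into $|B(k_0-d,\tfrac{r_0}{2})|=0$. Since $k_0-d\leq k_m$ and $\tfrac{r_0}{2}\leq r_m$, the set $B(k_0-d,\tfrac{r_0}{2})$ is contained in every $B(k_m,r_m)$; writing $\delta_0:=|B(k_0-d,\tfrac{r_0}{2})|$, this gives $b_m\geq\delta_0$, and, because $u\leq k_0-d$ on that set, $u_m=\int_{B(k_m,r_m)}(k_m-u)^t\,dx\geq(k_m-(k_0-d))^t\,\delta_0=(d\,2^{-m})^t\,\delta_0$. Therefore $\psi_m=u_m^{\,\theta N/t}\,b_m\geq\delta_0^{\,\frac{\theta N}{t}+1}\,d^{\,\theta N}\,2^{-m\theta N}$. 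Comparing this with $\psi_m\leq\psi_0\,2^{-m(N\theta+t)/(\theta-1)}$ and letting $m\to\infty$ forces $\delta_0=0$, provided the right-hand decay is strictly faster, i.e. $\tfrac{N\theta+t}{\theta-1}>\theta N$. This is exactly where the choice of $\theta$ is used: the defining relation $\theta^2-\theta=\tfrac{t}{N}$, i.e. $N\theta^2=N\theta+t$, turns $\theta N(\theta-1)=N\theta^2-N\theta$ into $t$, so the required inequality reduces to $N\theta+t>t$, which is trivial — and this is precisely the reason, remarked after \eqref{psisminus}, that $\theta$ must be this particular root.

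The only point that needs genuine care is the bookkeeping in the second step: carrying the abstract threshold $C_0^{-1/(\theta-1)}B^{-1/(\theta-1)^2}$ back to an explicit lower bound on $d$ and matching the powers of $2$, of $r_0$, and of $\psi_0$ against those appearing in \eqref{ddt}. Everything else is routine: the recursion in the first step is a direct substitution into \eqref{psisminus}, and the geometric-convergence lemma invoked in the second step is classical.
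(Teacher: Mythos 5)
Your argument is correct and follows essentially the same route as the paper's: specialize \eqref{psisminus} to $(k_m,r_m)$, $(k_{m+1},r_{m+1})$, land on a recursion of De Giorgi type $\psi_{m+1}\leq C_0\,B^m\psi_m^{\,\theta}$, apply (or, as the paper does, re-derive by explicit induction) the fast geometric convergence lemma, and pass from $\psi_m\to0$ to the thesis.

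Two minor remarks. First, your bookkeeping of the constant is the correct one: $(r_m-r_{m+1})^{-N\theta}=(r_0\,2^{-(m+2)})^{-N\theta}$ contributes $2^{(m+2)N\theta}$, whereas \eqref{bicas} as printed carries only $2^{(m+1)N\theta}/(2r_0)^{N\theta}=2^{mN\theta}/r_0^{N\theta}$; the missing factor $2^{2N\theta}$ only affects the numerical value of $C$ in \eqref{ddt}, not the statement. Second, and more substantively, your last step is tighter than the paper's. The paper deduces $\delta_0:=\big|B(k_0-d,\tfrac{r_0}{2})\big|=0$ from the inequality
\[
\delta_0\,\Big(\int_{B(k_0-d,\,r_0/2)}\big((k_0-d)-u\big)^t\,dx\Big)^{\theta N/t}\;\leq\;\psi_m\;\longrightarrow\;0,
\]
but as written this only forces the \emph{product} to vanish; one could a priori have $\delta_0>0$ while the integral vanishes (namely if $u\equiv k_0-d$ on that set, which is not excluded by the definition $B(k,r)=\{u\leq k\}$). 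Your two-sided comparison closes that loophole cleanly: the pointwise bound $k_m-u\geq d\,2^{-m}$ on $B(k_0-d,\tfrac{r_0}{2})$ gives the lower bound $\psi_m\geq\delta_0^{\,1+\theta N/t}\,d^{\,\theta N}\,2^{-m\theta N}$, and confronting it with $\psi_m\leq\psi_0\,2^{-m\beta}$ forces $\delta_0=0$ precisely because $\beta>\theta N$, which your algebra correctly reduces via $\theta N(\theta-1)=t$ to the trivial $N\theta>0$.
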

\begin{proof}
Note that $\,a_m\,,\,u_m\,,$ and  $\,\psi_m\,,$ are non-increasing
sequences. By setting in equation \eqref{psisminus}
$\,(k,\,R)=\,(k_m,\,r_m)\,,$ and $
\,(h,\,\ro)=\,(k_{m+\,1},\,r_{m+\,1})\,,$
one shows that%
\be\label{bicas}%
\psi_{m+1}\leq\,c_1^{\,\frac{N}{t}
\theta\,}\,\frac{1}{d^t}\,\frac{1}{(2\,r_0)^{\,N\,\theta}}\,
\,2^{(\,m+\,1)\,(\,t+\,N\,\theta)}\,\psi_m^{\theta}\,.
\ee%
We want to prove, by induction, that
\be\label{inducao}%
\psi_m\leq\,\frac{\psi_0}{2^{\,\beta\,m}}\,,\quad \forall \,
m\geq\,0\,,
\ee%
where
$$
\beta=\,\frac{\,t+\,N\,\theta}{\theta-\,1}\,.
$$
For $m=0\,,$ \eqref{inducao} is obvious. Assume it for some
$\,m\geq\,0\,.$ By appealing to \eqref{bicas} and \eqref{inducao}
straightforward calculations show that

\be\label{bicasb}%
\psi_{m+1}\leq\,c_1^{\,\frac{N}{t}
\theta\,}\,\frac{1}{d^t}\,\frac{2^{\beta\,\theta}}{(2\,r_0)^{\,N\,\theta}}\,
\,\psi_0^{\theta-\,1}\,\frac{\psi_0}{2^{\,\beta\,(m+\,1)}}\,.
\ee%
This proves \eqref{inducao}, under the assumption \eqref{ddt}. In
particular, $\,\psi_m \rightarrow \,0\,,$ as $\,m \rightarrow
\,\infty\,.$ Since
$$
\big|\,B(k_0 -\,d,\,\frac{r_0}{2})\,\big| \,\Big\{\,\int_{B(k_0
-\,d,\,\frac{r_0}{2})} \,\big(\,(\,k_0 -\,d\,)-\,u\,\big)^t
\,dx\,\Big\}^{\,\theta \frac{N}{t}} \leq\,\psi_m\,,
$$
the thesis of the theorem follows.%
\end{proof}

\begin{corollary}\label{corinf}%
There is a constant $C$, independent of $r_0$ and $k_0$, such that
\be\label{minimus}%
\textrm{Inf}_{I(\frac{r_0}{2})}  \,u \geq\,k_0 -\,
C\,\Big\{\,\frac{1}{r_0^N}\,\int_{B(k_0,\,r_0)} \,
\big(\,k_0-\,u\,\big)^t \,dx\,\Big\}^\frac{1}{t}\,
\Big\{\,\frac{1}{r_0^N}\,|\,B(k_0,\,r_0)\,|\Big\}^\frac{\theta-1}{t}\,.
\ee%
In particular,
\be\label{minimus2}%
\textrm{Inf}_{I(\frac{r_0}{2})}  \,u \geq\,k_0 -\,
C\,\Big\{\,\frac{1}{r_0^N}\,\int_{B(k_0,\,r_0)} \,
\big(\,k_0-\,u\,\big)^t \,dx\,\Big\}^\frac{1}{t}\,.
\ee%
\end{corollary}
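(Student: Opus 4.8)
The plan is to obtain Corollary \ref{corinf} directly from Lemma \ref{L55bis}, by making the optimal choice of the free parameter $d$ in that lemma and then re-reading the conclusion $|B(k_0-d,r_0/2)|=0$ as a lower bound for the essential infimum of $u$.

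First I would fix $r_0\in(0,\rho_0/2]$ and $k_0\in\R$ and abbreviate $u_0=\int_{B(k_0,r_0)}(k_0-u)^t\,dx$ and $b_0=|B(k_0,r_0)|$, so that $\psi_0=u_0^{\,\theta N/t}\,b_0$ is precisely the quantity entering the threshold \eqref{ddt}. I then take $d$ equal to that threshold, $d_0:=C\,\psi_0^{(\theta-1)/t}\,r_0^{-N\theta/t}$, with $C=C(t,N,a)$ (through $c_1,\theta,\beta$), independent of $r_0$ and $k_0$. The only computation needed is to collapse the exponents in $d_0$ using the identity $\theta^2-\theta=t/N$ recorded just after \eqref{psisminus} — equivalently $N\theta(\theta-1)/t=1$, hence $\theta N(\theta-1)/t^2=1/t$ — which gives
$$
d_0=C\,u_0^{\,\theta N(\theta-1)/t^2}\,b_0^{\,(\theta-1)/t}\,r_0^{-N\theta/t}
   =C\,u_0^{1/t}\,b_0^{(\theta-1)/t}\,r_0^{-N\theta/t}
   =C\,\{r_0^{-N}u_0\}^{1/t}\,\{r_0^{-N}b_0\}^{(\theta-1)/t},
$$
the last equality because $-N/t-N(\theta-1)/t=-N\theta/t$. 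Thus $d_0$ is exactly the quantity subtracted from $k_0$ on the right-hand side of \eqref{minimus}.

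Next I would invoke Lemma \ref{L55bis} with $d=d_0+\varepsilon$ for an arbitrary $\varepsilon>0$: this ensures the hypothesis $d>0$ even in the degenerate cases $u_0=0$ or $b_0=0$, and since $d\ge d_0$ the condition \eqref{ddt} still holds, so the lemma gives $u\ge k_0-d_0-\varepsilon$ a.e.\ in the free region $\Omega(y,r_0/2)$. On the complementary part $I(r_0/2)\setminus\Omega$, which is contained in $E_{\rho_0}$ since $r_0/2<\rho_0$, one has $u=m$ by Lemma \ref{lemumcinco}; as $d_0\ge0$, $k_0-d_0-\varepsilon\le k_0\le m$ in the range to which the corollary is applied, so this part does not pull the infimum below $k_0-d_0-\varepsilon$. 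Hence $\textrm{Inf}_{I(r_0/2)}u\ge k_0-d_0-\varepsilon$, and letting $\varepsilon\downarrow0$ yields \eqref{minimus}. Finally \eqref{minimus2} follows from \eqref{minimus} by the trivial bound $b_0=|B(k_0,r_0)|\le|I(r_0)|=V_N r_0^N$, so that the factor $\{r_0^{-N}b_0\}^{(\theta-1)/t}\le V_N^{(\theta-1)/t}$ can be absorbed into the constant.

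Being a corollary, this carries no serious obstacle. The single delicate point is the exponent arithmetic: one must check that the powers of $u_0$, $b_0$ and $r_0$ in \eqref{ddt} collapse, via $\theta(\theta-1)=t/N$, to the clean exponents $1/t$, $(\theta-1)/t$ and $-N\theta/t$ — after which it is pure bookkeeping, modulo the harmless $\varepsilon$-regularisation used to sidestep the degenerate sublevel sets.
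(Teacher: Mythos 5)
Your proposal is correct and follows the same route as the paper: choose $d$ equal to the threshold in \eqref{ddt}, use $\theta(\theta-1)=t/N$ to collapse the exponents so that the threshold becomes exactly the subtracted term in \eqref{minimus}, and read $|B(k_0-d,r_0/2)|=0$ from \eqref{demeias} as the lower bound on $\mathrm{Inf}_{I(r_0/2)}u$, with \eqref{minimus2} then following from $|B(k_0,r_0)|\le V_N r_0^N$ since $\theta>1$. The only additions over the paper's terse statement are the harmless $\varepsilon$-regularization for the degenerate case $d_0=0$ and the explicit remark that $u=m$ on $I(r_0/2)\setminus\Omega$; both are legitimate and fill in details the paper leaves implicit.
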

The proof of the first estimate follows immediately from
\eqref{demeias}, by taking into account that the $C$ term in the
right hand sice of \eqref{minimus} is equal to the $\,C$ term in the
right hand side of \eqref{ddt}. The second estimate follows from the
first one (here, we change the value of the constant $C$). Since $C$
does not depend on $r_0$ and $k_0\,,$ we drop the index $0$.
Further, we define
$$
i(r)=\,{Inf}_{I(r)}\,u \,, \quad  s(r)=\,{Sup}_{I(r)}\,u \,,\quad
\om(r)=\,s(r) -\,i(r)\,.
$$
By setting in \eqref{minimus2} $k=\,i(2\,r) +\,\eta\,\om(2\,r)\,,$
where $\,\eta>\,0\,,$ and by
taking into account that for  $\,x\,\in B(k,\,r)\,$ one has%
$$
0\leq\,k-\,u(x) \leq\,\eta\,\om(2\,r)\,,
$$
it follows that
$$
i(\frac{r}{2})\geq\,i(2r) +\,\eta\,\om(2r)-\,
C\,\Big\{\,\frac{1}{r^N}\,|\,B(k,\,r)\,|\,\Big\}^\frac{1}{t}\,\eta\,\om(2r)\,.
$$
Hence,
$$
\om(\frac{r}{2})\leq\,\Big\{\,1-\,\eta \Big[\,1-\,
C\,\Big(\,\frac{1}{r^N}\,|\,B(k,\,r)\,|\,\Big)^\frac{1}{t}\,\Big]\,\Big\}\,\om(2r)\,.
$$

\vspace{0.2cm}

For convenience we replace $r$ by $2r$ in the next result.
\begin{proposition}\label{propas}
Let be $\,k=\,i(4\,r) +\,\eta\,\om(4\,r)\,,$ Then
\be\label{E521}%
\om(r)\leq\,\Big\{\,1-\,\eta \, \Big[\,1-\,
C\,\Big(\,\frac{1}{r^N}\,|\,B(k,\,2r)\,|\,\Big)^\frac{1}{t}\,\Big]\,\Big\}\,\om(4r)\,.
\ee%
\end{proposition}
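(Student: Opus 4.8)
The plan is to derive \eqref{E521} directly from the oscillation bound already obtained just above Proposition \ref{propas}, namely
$$
\om\Big(\frac{r}{2}\Big)\leq\,\Big\{\,1-\,\eta \Big[\,1-\,
C\,\Big(\,\frac{1}{r^N}\,|\,B(k,\,r)\,|\,\Big)^\frac{1}{t}\,\Big]\,\Big\}\,\om(2r)\,,
$$
valid for $k=\,i(2r)+\,\eta\,\om(2r)\,.$ The statement of the proposition is nothing but this inequality after the stated change of variable: replacing $r$ by $2r$ everywhere. First I would set $r'=\,2r$ in the displayed bound, so that it reads $\om(r'/2)\leq\{\cdots\}\,\om(2r')$ with $k=\,i(2r')+\,\eta\,\om(2r')$; then substituting back $r'=\,2r$ gives $\om(r)$ on the left, $\om(4r)$ on the right, and $k=\,i(4r)+\,\eta\,\om(4r)\,,$ with the bracketed factor now containing $|B(k,2r)|$ and the prefactor $r^{-N}$ replaced by $(2r)^{-N}$. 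Absorbing the harmless constant $2^{-N}$ into $C$ (which is allowed, since $C$ is independent of $r_0$ and $k_0$) yields exactly \eqref{E521}.

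The one point that genuinely needs checking is that the set $B(k,r)$ appearing in the earlier estimate \eqref{minimus2}, namely $\{x\in\Om(y,r):u(x)\le k\}$ with $r$ the \emph{inner} radius, matches the $B(k,2r)$ in \eqref{E521} after the rescaling. Concretely, in Corollary \ref{corinf} the conclusion \eqref{minimus2} controls $\mathrm{Inf}_{I(r_0/2)}\,u$ in terms of an integral over $B(k_0,r_0)$; so when the argument above substitutes $k=\,i(2r)+\,\eta\,\om(2r)$ the relevant ball has radius $2r$ and the conclusion concerns $I(r)$. Carrying the substitution $r\mapsto 2r$ through, the ball radius becomes $4r/2=2r$ and the conclusion concerns $I(2r)\supset I(r)\,$, and monotonicity of $\om$ gives the bound for $\om(r)$. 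I would spell out this bookkeeping in a couple of lines.

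I do not expect any real obstacle here: the proposition is a restatement for convenience, as the sentence preceding it announces ("For convenience we replace $r$ by $2r$ in the next result"). The only thing to be careful about is keeping track of which radius ($r$, $2r$, $4r$, or half of each) appears in the volume term $|B(k,2r)|$, in the prefactor $r^{-N}$, and in the arguments of $i$ and $\om$, so that the final inequality is literally \eqref{E521}. The short proof I would write is therefore: "Apply the inequality established before the statement with $r$ replaced by $2r$, use that $i(\cdot)$ and $\om(\cdot)$ are monotone, and absorb the resulting numerical factor into the constant $C$."

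\begin{proof}
Just before the statement we established that, with $k=\,i(2r)+\,\eta\,\om(2r)\,,$
$$
\om\Big(\frac{r}{2}\Big)\leq\,\Big\{\,1-\,\eta \Big[\,1-\,
C\,\Big(\,\frac{1}{r^N}\,|\,B(k,\,r)\,|\,\Big)^\frac{1}{t}\,\Big]\,\Big\}\,\om(2r)\,.
$$
Replace $r$ by $2r$ in this inequality. The hypothesis on $k$ becomes $k=\,i(4r)+\,\eta\,\om(4r)\,,$ the left-hand side becomes $\om(r)\,,$ the factor $\om(2r)$ on the right becomes $\om(4r)\,,$ the volume term becomes $|\,B(k,\,2r)\,|\,,$ and the prefactor $r^{-N}$ is replaced by $(2r)^{-N}=\,2^{-N}\,r^{-N}\,.$ Since the constant $C$ does not depend on $r$ or $k$, the factor $2^{-N}$ may be absorbed into $C$, and we obtain precisely \eqref{E521}.
\end{proof}
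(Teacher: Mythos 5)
Your proof is correct and is exactly the route the paper itself takes: the sentence ``For convenience we replace $r$ by $2r$ in the next result'' signals that Proposition~\ref{propas} is nothing more than the displayed oscillation bound with $r\mapsto 2r$. Two small remarks. First, you are right to flag the $(2r)^{-N}$ versus $r^{-N}$ discrepancy; but note that one does not even need to re-baptize the constant, since $(2r)^{-N}<r^{-N}$ makes the bracketed factor in \eqref{E521} larger, so the stated inequality with $C$ unchanged is simply a weakening of what the substitution literally yields. Second, the bookkeeping in your middle paragraph is slightly off: in the step before the proposition the corollary is applied with $r_0=r$ (so the ball is $B(k,r)$ and the conclusion concerns $I(r/2)$, not $I(r)$), and after the substitution $r\mapsto 2r$ the conclusion concerns $I(r)$ directly, so no appeal to the monotonicity of $\om$ is needed. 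The formal proof at the end is nonetheless correct.
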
%
\begin{remark}\label{melhas}
\rm{In reference \cite{c2} it was proved ( \cite{c2}, equation (6.21)) that%
\be\label{exnovo}%
|B(h,\,\rho)|\,(k-\,h)^t \leq \\%
c\,\Big(\,(R-\,\ro)^{-\,t} \int_{B(k)} \,(k-\,u)^t \,dx\,+
\int_{B(k)} \, |\,\nabla\,u\,|^t \,\phi^{\,t}
\,dx\,\Big)\,|B(h,\,\rho)|^\frac{t}{N}\,.%
\ee%
This estimate, together with \eqref{eq54}, shows that
\be\label{exle64bb}%
|B(h,\,\rho)|^{\,1-\,\frac{\,t}{N}}\,(k-\,h)^t \leq \,
c_1\,(R-\,\ro)^{-\,t}
\int_{B(k)} \,(k-\,u)^t \,dx\,.%
\ee%
If we appeal to this estimate (instead of appealing to the second
estimate \eqref{exle64}) we get \eqref{minimus} with the exponent
$\,\frac{\theta-1}{t}\,$ replaced by $\,\frac{1}{N\,\theta_1}\,,$
where $\,\frac{t}{N-\,t}+\,\theta_1=\,\theta_1^{\,2}\,.$}%
\end{remark}
\section{Proof of theorem \ref{teo-nopub}}\label{regcrit2}
We start this section by stating a well known potential theory
result.
\begin{lemma}\label{L62}
Let $\,\mu\,$ be a  compact supported, bounded variation measure in
$\,\R^N\,,$ and let%
\be\label{e63}%
U^{\mu}_1(x)=\,\int \, \frac{\,d\,\mu(z)}{|\,x-\,z|^{N-\,1}}%
\ee%
be the potential of order $\,1\,$ generated by $\,\mu\,.$ Then,
there is a positive constant $\,c\,$ such that
\be\label{e64}%
|\,\{\,x \in\,\R^N: \,|\,U^{\mu}_1(x)\,| \geq\,\tau\,\}\,|
\leq\,\Big(\,\frac{c\,\int
\,|\,d\,\mu\,|}{\tau}\,\Big)^{\frac{\,N}{N-\,1}}\,,%
\ee%
for each $\,\tau>\,0\,$.
\end{lemma}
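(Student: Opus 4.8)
The plan is to prove this as the classical weak-type $(1,N/(N-1))$ bound for the Riesz potential of order one, via a truncation (``splitting of the kernel'') argument. Write $M=\int\,|d\mu|$ for the total variation. Given $\tau>0$, introduce a cut-off radius $\delta>0$ to be fixed later, and decompose the kernel $|x-z|^{-(N-1)}$ into its part supported on $\{|x-z|\ge\delta\}$ and the complementary part. This gives $U^{\mu}_1=U'+U''$, where $U'(x)=\int_{|x-z|\ge\delta}|x-z|^{-(N-1)}\,d\mu(z)$ and $U''(x)=\int_{|x-z|<\delta}|x-z|^{-(N-1)}\,d\mu(z)$.

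The far part is controlled pointwise with no effort: $|U'(x)|\le\delta^{-(N-1)}M$ for every $x$. Choosing $\delta$ so that $\delta^{-(N-1)}M=\tau/2$, i.e.
\[
\delta=\Big(\tfrac{2M}{\tau}\Big)^{\frac{1}{N-1}},
\]
forces $|U'|\le\tau/2$ everywhere, whence $\{\,|U^{\mu}_1|\ge\tau\,\}\subseteq\{\,|U''|\ge\tau/2\,\}$. For the near part one estimates its $L^1$ norm by Tonelli's theorem applied to the nonnegative integrand $\mathbbm 1_{|x-z|<\delta}\,|x-z|^{-(N-1)}$:
\[
\int_{\R^N}|U''(x)|\,dx\le\int_{\R^N}\!\!\int_{|x-z|<\delta}\frac{d|\mu|(z)}{|x-z|^{N-1}}\,dx
= M\int_{|w|<\delta}\frac{dw}{|w|^{N-1}}=c_N\,\delta\,M,
\]
where $c_N$ is the $(N-1)$-dimensional measure of the unit sphere (the radial integral $\int_0^{\delta}r^{-(N-1)}\,r^{N-1}\,dr$ equals $\delta$). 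Markov's inequality then yields $|\{\,|U''|\ge\tau/2\,\}|\le(2/\tau)\,c_N\,\delta\,M$.

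Combining these two bounds and substituting the chosen $\delta$ gives
\[
\big|\{\,x\in\R^N:\ |U^{\mu}_1(x)|\ge\tau\,\}\big|\le\frac{2c_N M}{\tau}\Big(\frac{2M}{\tau}\Big)^{\frac{1}{N-1}}=C\Big(\frac{M}{\tau}\Big)^{\frac{N}{N-1}},
\]
since $1+\frac{1}{N-1}=\frac{N}{N-1}$; this is exactly \eqref{e64} with $c=C^{(N-1)/N}$. The argument is elementary, and the only place that calls for the slightest care is the choice of the balancing radius $\delta$ — equating the far-part and near-part contributions is what produces the sharp exponent $N/(N-1)$, any other choice giving a strictly weaker estimate — together with the routine bookkeeping of passing to $|\mu|$ and invoking Tonelli, which incidentally also shows that $U^{\mu}_1$ is finite almost everywhere.
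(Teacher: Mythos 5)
Your proof is correct, but it follows a genuinely different route from the paper's. The paper does not prove the weak-type estimate directly: it invokes a capacity-theoretic result attributed to E.~Cartan (stated for potentials of order $2$ in Cartan's Lemma~4 and then extended to order $\alpha=1$), namely
$$\textrm{cap}^{*}_1\{\,x : |U^{\mu}_1(x)|\geq\tau\,\}\leq\frac{2^{N-1}\int|d\mu|}{\tau}\,,$$
and then passes from internal capacity of order $1$ to Lebesgue measure via the classical isocapacitary inequality $|E|\leq c(N)\,\big(\textrm{cap}^{*}_1(E)\big)^{N/(N-1)}$. Your argument instead proves the weak-$L^{N/(N-1)}$ bound for the Riesz potential of order $1$ from scratch, by the standard kernel-splitting device: peel off the far part of the kernel ($|x-z|\geq\delta$), where it is pointwise bounded by $\delta^{-(N-1)}$; control the near part in $L^1$ by Tonelli, obtaining the factor $c_N\,\delta$; choose $\delta$ so that the far part contributes exactly $\tau/2$; and finish with Chebyshev. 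Both arguments yield the same exponent $N/(N-1)$, and the arithmetic in your proof ($1+\frac{1}{N-1}=\frac{N}{N-1}$) is correct.

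What each approach buys: yours is elementary, self-contained, and requires no external potential-theoretic machinery — a reader need not know what capacity of order $1$ is nor track down Cartan's lemma. The paper's route is shorter on the page and sits more naturally within the capacity-centric viewpoint that permeates the rest of the paper (capacitary potentials, Wiener-type criteria), but it offloads the substance to two cited results. One small benefit of your proof worth keeping in mind: the Tonelli step shows as a byproduct that $U^{\mu}_1$ is finite almost everywhere, which the paper takes for granted.
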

For potentials of order $\,2\,$, the above result is due essentially
to E. Cartan, see \cite{cartan} lemma 4. The result is easily
extended to potentials of arbitrary order $\,\alpha\,.$ For
$\,\alpha=\,1\,,$ it claims that
$$
\textrm{cap}^{\,*}_1\{\,x \in\,\R^N: \,|\,U^{\mu}_1(x)\,|
\geq\,\tau\,\}\leq\,\frac{2^{N-\,1}\,\,\int \,|\,d\,\mu\,|}{\tau}\,,
$$
for each $\,\tau>\,0\,,$ where $\,\textrm{cap}^{\,*}_1 (E)\,$
denotes the internal capacity of order $\,1\,$ of the set $\,E\,$.
Equation \eqref{e64} follows by appealing to the classical estimate
$$
|\,E\,|\leq\, c(N)\,(\,\textrm{cap}^{\,*}_1
(E)\,)^{\frac{\,N}{N-\,1}}\,.
$$
Next we prove the following result.

\begin{lemma}\label{L63}
Let be $\,0\leq\,h<\,k\leq\,m\,,$ and $\,0<\,r<\,\frac{\ro_0}{2}\,.$
Then \be\label{e65}%
\begin{array}{ll}
|\,B(h,\,2\,r)\,|^{\frac{\,t\,(N-\,1)}{N\,(\,t-\,1)}}\leq\,c\,[\,(k-\,h)\,\sigma(2\,r)\,]^{\,-
\frac{\,t}{\,t-\,1}}\,(\,|\,B(k,\,2\,r)\,|-\,|\,B(h,\,2\,r)\,|\,)\\
\\
\Big(\,(\,2\,r\,)^{\,-t} \,\int_{B(k,\,4\,r)} \, |\,u-\,k\,|^t \,dx
\,\Big)^{\frac{1}{t-\,1}}\,.%
\ea\ee
\end{lemma}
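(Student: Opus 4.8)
The plan is to run a De Giorgi truncation argument for the capacitary potential $\,u=\,u_{m,\,\ro_0}\,$ in which the Sobolev inequality is replaced by the potential estimate built into Definition \ref{assumpsigma}. Fix $\,0\le\,h<\,k\le\,m\,$ and $\,0<\,r<\,\ro_0/2\,$, put $\,I=\,I(y,\,2\,r)\,$, and set
$$
w=\,\min\bigl(\,(k-\,u)^{+},\,k-\,h\,\bigr)\,,
$$
that is, $\,w=\,g(u)\,$ with $\,g(s)=\,\min((k-\,s)^{+},\,k-\,h)\,$ a $1$-Lipschitz, non-increasing function that is identically $\,0\,$ for $\,s\ge\,k\,$. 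By the composition rule for Lipschitz functions of $\,H^{1,\,t}\,$ functions, $\,w\in\,H^{1,\,t}(I)\,$ with $\,\na\,w=\,-\,\na\,u\,$ a.e. on $\,\{h<\,u<\,k\}\,$ and $\,\na\,w=\,0\,$ elsewhere, and $\,w=\,k-\,h\,$ on $\,B(h,\,2\,r)\,$. Since $\,u=\,m\,$ on $\,E_{\ro_0}\,$ in the $\,H^{1,\,t}\,$ sense (lemma \ref{lemumcinco}), $\,E_{2\,r}\subset\,E_{\ro_0}\,$ (because $\,2\,r<\,\ro_0$), and $\,g(m)=\,0\,$ (as $\,k\le\,m$), the function $\,w\,$ vanishes on $\,E_{2\,r}\,$ in the $\,H^{1,\,t}(I)\,$ sense. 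Hence Definition \ref{assumpsigma} applies with $\,\ro=\,2\,r\,$ and gives, for a.e. $\,x\in\,I\,$,
$$
w(x)\le\,\hs(2\,r)^{-\,1}\int_{I}\frac{|\,\na\,w(z)\,|}{|\,x-\,z\,|^{\,N-\,1}}\,dz=\,\hs(2\,r)^{-\,1}\int_{G}\frac{|\,\na\,u(z)\,|}{|\,x-\,z\,|^{\,N-\,1}}\,dz\,,
$$
where $\,G=\,\{x\in\,\Om(y,\,2\,r):\,h<\,u(x)<\,k\,\}\,$, so $\,|G|\le\,|B(k,\,2\,r)|-\,|B(h,\,2\,r)|\,$.

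Restricting the last display to $\,x\in\,B(h,\,2\,r)\,$, where $\,w(x)=\,k-\,h\,$, shows that the order-$1$ potential $\,U^{\nu}_{1}\,$ of the compactly supported bounded-variation measure $\,d\nu=\,\mathbbm{1}_{G}\,|\,\na\,u\,|\,dz\,$ satisfies $\,U^{\nu}_{1}(x)\ge\,\hs(2\,r)\,(k-\,h)\,$ for a.e. $\,x\in\,B(h,\,2\,r)\,$. The Cartan-type bound of lemma \ref{L62}, with $\,\tau=\,\hs(2\,r)\,(k-\,h)\,$, therefore yields
$$
|B(h,\,2\,r)|\le\,\Bigl(\,\frac{c\,\int_{G}|\,\na\,u\,|\,dz}{\hs(2\,r)\,(k-\,h)}\,\Bigr)^{\frac{N}{N-\,1}}\,.
$$
By H\"{o}lder's inequality $\,\int_{G}|\,\na\,u\,|\,dz\le\,\bigl(\int_{B(k,\,2\,r)}|\,\na\,u\,|^{\,t}\,dz\bigr)^{\frac1t}\,\bigl(|B(k,\,2\,r)|-\,|B(h,\,2\,r)|\bigr)^{\frac{t-\,1}{t}}\,$, and by the Caccioppoli estimate of theorem \ref{T51}, applied with the cut-off $\,\phi\,$ of \eqref{fixis} corresponding to $\,\ro=\,2\,r\,$, $\,R=\,4\,r\,$ (so $\,\phi\equiv\,1\,$ on $\,I(y,\,2\,r)\,$, $\,|\,\na\,\phi\,|\le\,(2\,r)^{-\,1}$),
$$
\int_{B(k,\,2\,r)}|\,\na\,u\,|^{\,t}\,dz\le\,\int_{B(k,\,4\,r)}|\,\na\,u\,|^{\,t}\,\phi^{\,t}\,dz\le\,c\,(2\,r)^{-\,t}\int_{B(k,\,4\,r)}|\,u-\,k\,|^{\,t}\,dx\,.
$$
Inserting these two estimates into the previous display and raising to the power $\,\frac{(N-\,1)\,t}{N\,(t-\,1)}\,$ produces exactly \eqref{e65}, with $\,\hs(2\,r)\,$ in place of $\,\sigma(2\,r)\,$, once the constants (depending only on $\,t,\,N,\,a$) are collected. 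To obtain the density $\,\sigma(2\,r)\,$ of \eqref{densmedida} in the denominator one simply invokes $\,\hs(2\,r)^{-\,1}\le\,C\,\sigma(2\,r)^{-\,1}\,$ from lemma \ref{lestamfourier}, the inequality being trivial when $\,|E_{2\,r}|=\,0\,$ since its right-hand side is then infinite.

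Apart from the choice of $\,w\,$, the computation is routine bookkeeping. The one step needing care is the claim that $\,w\,$ vanishes on $\,E_{2\,r}\,$ in the $\,H^{1,\,t}(I)\,$ sense: this is the only place where the obstacle condition $\,u=\,m\,$ on $\,E_{\ro_0}\,$ enters, and it follows from the composition rule for piecewise-linear Lipschitz functions together with $\,E_{2\,r}\subset\,E_{\ro_0}\,$ and $\,k\le\,m\,$. One should also notice that the radii match precisely: the factor $\,(2\,r)^{-\,t}\,$ in \eqref{e65} is the gap $\,R-\,\ro=\,4\,r-\,2\,r\,$ generated by theorem \ref{T51}, while the hypothesis $\,r<\,\ro_0/2\,$ is used only to ensure $\,2\,r<\,\ro_0\,$ (so that $\,E_{2\,r}\subset\,E_{\ro_0}$) and $\,I(y,\,4\,r)\subset\subset\,\Si\,$; the estimate of theorem \ref{T51} remains valid on this whole range because the competitor $\,u-\,\phi^{\,t}\,\min(u-\,k,\,0)\,$ belongs to $\,\K_m(\Si)\,$ regardless of the support of $\,\phi\,.$
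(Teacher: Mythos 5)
Your argument reproduces the paper's own proof almost verbatim: you form the same two-sided truncation (the paper's $v$ is your $w$), note that it vanishes on $E_{2r}$ because $u=m\ge k$ there, apply the potential bound of Definition \ref{assumpsigma} together with the Cartan-type estimate of Lemma \ref{L62}, restrict to $B(h,2r)$, apply H\"older and finally the Caccioppoli estimate of Theorem \ref{T51} with $\rho=2r$, $R=4r$. The only difference is your closing sentence replacing $\hs(2r)$ by the density $\sigma(2r)$ via Lemma \ref{lestamfourier}: in the paper the symbol $\sigma$ in \eqref{e65} already denotes the quantity of Definition \ref{assumpsigma} (the manuscript is notationally loose here), so that extra step is not required for the lemma itself — though it is exactly the step needed later to pass from Theorem \ref{teo-nopub} to Theorem \ref{teo-unpub}, so no harm is done.
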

\begin{proof}
Set
$$%
v=\,\left\{%
\begin{array}{ll}%
k-\,h\ \quad \textrm{if} \quad u\leq\,h\,,\\
k-\,u \quad \textrm{if} \quad h \leq\,u\leq\,k\,,\\
0 \quad \textrm{if} \quad k\leq\,u\,,\\
\ea%
\right.%
$$%
and%
\be%
\mu(z)=\,\left\{%
\begin{array}{ll}%
|\,\na\,v(z)\,| \quad \textrm{on} \quad I(0,\,2\,r)\,,\\
\\
0 \quad \textrm{on} \quad (\complement \,I)(0,\,2\,r)\,.
\ea%
\right.%
\ee%
Since $\,v\,$ vanishes on $\,E_{2\,r}\,$, from assumption
\ref{assumpsigma} it follows $\,|\,v(x)\,|
\leq\,c\,\sigma(2\,r)^{\,-\,1}\,\,U^{\mu}_1(x)\,$ on $\,I(2\,r)\,.$
Hence, by lemma \ref{L62}, we show that
\be\label{e66}\begin{array}{ll}%
|\,\{\,x\in\,I(2\,r)\,:\,|\,v(x)\,|\geq\,\tau\,\}\,|
\leq\,c\,\Big(\,(\sigma(2\,r)\,\tau\,)^{\,-\,1}\,\int_{I(\,2\,r)}
 \,|\,\na\,v(z)\,| \, dz \,\Big)^{\frac{\,N}{N-\,1}}\,,%
\ea\ee%
for each $\,\tau>\,0\,.$ Let be $\,\tau=\,k-\,h-\,\ep\,$, where
$\,\ep>\,0\,.$\par%
By appealing to the definition of $\,v\,$ we prove that
$$
\begin{array}{ll}%
|\,B(h,\,2\,r)\,| \leq \,|\,\{\,x \in\,I(2\,r): v(x)
\geq\,\tau\,\}\,|
\\
\leq\,c\,\Big(\,\big[\sigma(2\,r)\,(k-\,h-\,\ep)\,\big]^{\,-\,1}\,\int_{B(k,\,2\,r)-\,B(h,\,2\,r)}
 \, |\,\na\,v(z)\,|  \, dz \,\Big)^{\frac{\,N}{N-\,1}}\,.%
\ea
$$
Further, by letting  $\,\ep \rightarrow 0\,$ in the last equation,
and by appealing to H\H older's inequality, we obtain the estimate
\be\label{e67}
\begin{array}{ll}%
|\,B(h,\,2\,r)\,|^{\frac{N-\,1}{\,N}}
\leq\,c\,\Big(\,\big[\sigma(2\,r)\,(k-\,h)\,\big]^{\,-\,1}\,
\Big(\,\int_{B(k,\,2\,r)} \, |\,\na\,u\,|^t \, dx \,\Big)^{\frac{\,1}{\,t}}\cdot\\
\\
\,\big(\,|\,B(k,\,2\,r)\,|-\,|\,B(h,\,2\,r)\,|\,\big)^{\frac{\,t-\,1}{\,t}}\,\Big)\,.%
\ea\ee%
Finally, by raising both terms of the last equation to the power
$\,\frac{\,t}{\,t-\,1}\,,$ and by appealing to theorem \ref{T51}
(with $\,\ro=\,2\,r\,,$ and $\,R=\,4\,r\,$) the thesis follows.
\end{proof}
\begin{theorem}\label{T61}
Let be $\,0<\,r<\,4^{\,-\,1}\ro_0\,.$  There is a constant
$\,C_1\,$, which depends at most on $\,a,\,p_0,\,d,\,t\,,$ and
$\,N\,,$ such that if $\,n_0=\,n_0(r) \,$ satisfies \eqref{enezero}
below, then%
\be\label{e68}%
\om(r)\leq\,(\,1-\,2^{\,-\,1}\,\eta_{\,n_0}\,)\,\om(4\,r)\,,%
\ee%
where
$$
\eta_{\,n_0}=\,2^{\,-(\,n_0+\,1)}\,.
$$
\end{theorem}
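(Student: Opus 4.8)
The plan is to run a De~Giorgi--type iteration on the level sets $B(k,2r)$, keeping the two radii $2r$ and $4r$ fixed while letting the level $k$ decrease towards $i(4r)$, and then to insert the resulting smallness of $|B(k_{n_0},2r)|$ into Proposition~\ref{propas}. We may assume $t<N$ (otherwise Corollary~\ref{coro-C} already gives the full conclusion) and $\om(4r)>0$ (otherwise $\om(r)\le\om(4r)=0$ and there is nothing to prove); recall also $0\le u\le m$, so $i(4r)=\mathrm{Inf}_{I(4r)}\,u\ge0$. Put $k_n=i(4r)+\eta_n\,\om(4r)$ with $\eta_n=2^{-(n+1)}$, and $b_n=|B(k_n,2r)|$. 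Then $(b_n)$ is nonincreasing, all the levels lie in $[0,m]$, and $k_n-k_{n+1}=\tfrac12\,\eta_n\,\om(4r)$, $k_n-i(4r)=\eta_n\,\om(4r)$, so on $B(k_n,4r)$ one has $0\le k_n-u\le\eta_n\,\om(4r)$ and hence
\[
\int_{B(k_n,4r)}|u-k_n|^t\,dx\le\big(\eta_n\,\om(4r)\big)^t\,|I(4r)|\le c\,\big(\eta_n\,\om(4r)\big)^t\,r^{N}.
\]

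Next I would apply Lemma~\ref{L63} with $h=k_{n+1}$, $k=k_n$. The decisive point is the cancellation of the factors $\eta_n\,\om(4r)$: the term $[(k_n-k_{n+1})\sigma(2r)]^{-t/(t-1)}$ carries $(\eta_n\om(4r))^{-t/(t-1)}$, while $\big((2r)^{-t}\int_{B(k_n,4r)}|u-k_n|^t\big)^{1/(t-1)}$ carries $(\eta_n\om(4r))^{t/(t-1)}$, so these powers drop out (up to a numerical constant, since $\eta_n\om(4r)=2(k_n-k_{n+1})$). What is left is an autonomous recursion
\[
b_{n+1}^{\,\lambda}\le A\,(b_n-b_{n+1}),\qquad \lambda=\frac{t\,(N-1)}{N\,(t-1)}>1,\quad A=c\,\sigma(2r)^{-\frac{t}{t-1}}\,r^{\frac{N-t}{t-1}}.
\]
Summing over $n=0,\dots,n_0-1$, telescoping the right-hand side, using $b_j\ge b_{n_0}$ for $j\le n_0$ (so the left-hand sum is $\ge n_0\,b_{n_0}^{\lambda}$), and the crude bound $b_0\le|I(2r)|\le c\,r^N$, one gets $b_{n_0}\le\big(A\,c\,r^N/n_0\big)^{1/\lambda}$. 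A direct check of the exponents of $r$ shows that they recombine exactly into $r^N$, leaving the scale-invariant estimate
\[
\frac{|B(k_{n_0},2r)|}{r^{N}}=\frac{b_{n_0}}{r^{N}}\le\Big(\frac{c}{n_0}\Big)^{1/\lambda}\,\sigma(2r)^{-\frac{N}{N-1}}.
\]

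It remains to choose $n_0=n_0(r)$ — this is precisely the content of the hypothesis \eqref{enezero} — large enough that $C\big(b_{n_0}/r^{N}\big)^{1/t}\le\tfrac12$, $C$ being the constant of Proposition~\ref{propas}; by the last display this holds as soon as $n_0\ge C_1\,\sigma(2r)^{-t/(t-1)}$ for a constant $C_1$ of the stated type (here one uses $\lambda\,\tfrac{N}{N-1}=\tfrac{t}{t-1}$). Then Proposition~\ref{propas}, applied with $\eta=\eta_{n_0}=2^{-(n_0+1)}$ and $k=k_{n_0}$, gives
\[
\om(r)\le\Big\{1-\eta_{n_0}\Big[1-C\Big(\tfrac{|B(k_{n_0},2r)|}{r^{N}}\Big)^{1/t}\Big]\Big\}\,\om(4r)\le\big(1-\tfrac12\,\eta_{n_0}\big)\,\om(4r),
\]
which is \eqref{e68}.

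The main obstacle is the middle step: verifying that the recursion is genuinely autonomous (perfect cancellation of the $\eta_n\om(4r)$ factors), that its exponent $\lambda$ exceeds $1$ — which is exactly where $t<N$ is used — and, above all, that after the telescoping/monotonicity argument every power of $r$ collapses into the single factor $r^N$, so that $b_{n_0}/r^N$ is controlled in a scale-invariant way purely by $n_0$ and $\sigma(2r)$. The remaining ingredients (Hölder's inequality, the bounds $0\le u\le m$, the volume estimate $|I(4r)|\le c\,r^N$, and checking that the levels $k_n$ satisfy the hypotheses of Lemma~\ref{L63}) are routine.
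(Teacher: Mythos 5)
Your proposal is correct and follows essentially the same route as the paper: the same geometric choice of levels $k_n=i(4r)+2^{-(n+1)}\om(4r)$ at the fixed pair of radii $(2r,4r)$, the same application of Lemma \ref{L63} with the cancellation of the $\eta_n\om(4r)$ factors yielding the autonomous recursion, the same telescoping to bound $b_{n_0}$, and the same insertion into Proposition \ref{propas} with $\eta=\eta_{n_0}$ to obtain \eqref{e68} once $n_0\ge C_1\sigma(2r)^{-t/(t-1)}$. The only cosmetic remark is that $\lambda>1$ is not actually needed in the telescoping step (it is a plain sum, not a superlinear De Giorgi iteration), so $t<N$ enters not there but through the framework of Part II (the Riesz kernel of order $1$ and the exponent $N/(N-1)$ in Lemma \ref{L62}); this does not affect the argument.
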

\begin{proof}
Let be $\,l=\,i(4r)\,,$ $\,\om=\,\om(4r)\,,$ and set, for each no-negative integer $\,j\,,$%
\be\label{E520}%
\left\{%
\begin{array}{ll}
\eta_j=\,2^{\,-\,(j+\,1)}\,,\\
\\
k_j=\,i(4r)+\,\eta_j \,\om(4r)\,,
\ea%
\right.%
\ee%
and $\,b_j=\,|\,B(k_j,\,2\,r)\,|\,.$ By lemma \ref{L63} with
$\,k=\,k_j \,$ and  $\,h=\,k_{\,j+\,1} \,,$ we obtain
$$
\begin{array}{ll}%
{b_{j+1}}^{\frac{t(\,N-\,1)}{\,N(t-1)}}\leq\,c\,\big[\,2^{\,-(\,j+\,2)}\,\om\,\sigma(2r)
\,\big]^{-\,\frac{t}{\,t-\,1}}\,(\,b_{j}-\,b_{j+1}\,)\cdot \\
\\
\big[\,(2r)^{\,-t}\,V_N\,(4\,r)^N\,(2^{\,-(\,j+\,1)}\,\om)^t
\,\big]^{\frac{\,1}{\,t-1}}\,.
\ea%
$$
Straightforward calculations show that%
\be\label{e610}%
\begin{array}{ll}%
{b_{j+1}}^{\frac{t(\,N-\,1)}{\,N(t-1)}}\leq\,c\,r^{\frac{N-\,t}{\,t-\,1}}\,\sigma(2r)^{\,-
\frac{\,t}{\,t-1}}\,(\,b_{j}-\,b_{j+1}\,)\,,%
\ea\ee%
where, for convenience, the value of the constant $c$ may change
from equation to equation (clearly, it depends only on fixed quantities like $N$, $t$, etc.).\par%
Denote by $\,n_0=\,n_0(r)\,$ an arbitrary positive integer, to be
fixed later on. From \eqref{e610} it follows that
$$
{b_{n_0}}^{\frac{t(\,N-\,1)}{\,N(t-1)}}\leq\,{b_{j+1}}^{\frac{t(\,N-\,1)}{\,N(t-1)}}\leq\,
c\,r^{\frac{N-\,t}{\,t-\,1}}\,\sigma(2r)^{\,-
\frac{\,t}{\,t-1}}\,(\,b_{j}-\,b_{j+1}\,)\,,
$$
for each $j\,$, $\,0\leq\,j\leq\,n_0 -\,1\,.$ Consequently,
$$
\begin{array}{ll}%
n_0\,{b_{n_0}}^{\frac{t(\,N-\,1)}{\,N(t-1)}}\leq\,
c\,r^{\frac{N-\,t}{\,t-\,1}}\,\sigma(2r)^{\,- \frac{\,t}{\,t-1}}\,
\sum_{j=\,0}^{n_0-\,1} \, (\,b_{j}-\,b_{j+1}\,)\\%
\\
\qquad \qquad \qquad \leq\,c_0\,\sigma(2r)^{\,- \frac{\,t}{\,t-1}}\,
(2r)^{\frac{\,t(N-\,1)}{\,t-1}}\,.%
\ea%
$$%
Hence,
\be\label{e612}%
\Big(\,{\frac{b_{n_0}}{(2\,r)^N}}\Big)^{\frac{1}{t}}
\leq\,C\,{n_0}^{-\frac{N(t-1)}{t^2\,(N-1)}}
\,\sigma(2r)^{\,- \frac{\,N}{\,t(N-1)}}\,.%
\ee%
On the other hand, from \eqref{E521}, one has
\be\label{E521bis}%
\om(r)\leq\,\Big\{\,1-\,2^{-(n_0+1)} \, \Big[\,1-\,
C\,\Big(\,\frac{b_{n_0}}{r^N}\,\,\Big)^\frac{1}{t}\,\Big]\,\Big\}\,\om(4r)\,.
\ee%
Finally, from \eqref{e612} and \eqref{E521bis},
\be\label{E521tis}%
\om(r)\leq\,\Big\{\,1-\,2^{-(n_0+1)} \, \Big[\,1-\,
\,C_0\,{n_0}^{-\frac{N(t-1)}{t^2\,(N-1)}} \,\sigma(2r)^{\,-
\frac{\,N}{\,t(N-1)}}\,\Big]\,\Big\}\,\om(4r)\,.
\ee%
Next, we want to single out an index $n_0=\,n_0(r)$ such that the
expression under square brackets is less or equal to $\frac12\,,$
for each positive (small) radius $r$. This leads to
\be\label{enezero}%
n_0(r)\geq\,C_1\,\sigma(2r)^{\,- \frac{\,t}{\,t-1}}\,,%
\ee%
where $C_1$ is a constant which depends at most on
$\,a,\,p_0,\,d,\,t\,,$ and $\,N\,$. In the sequel we denote by
$n_0(r)$ the smallest integer for which \eqref{enezero} holds. Hence
\be\label{enezero3}%
C_1\,\sigma(2r)^{\,- \frac{\,t}{\,t-1}}\leq\,n_0(r)<\,1+\,C_1\,\sigma(2r)^{\,- \frac{\,t}{\,t-1}}\,.%
\ee%
\end{proof}
\begin{lemma}\label{L64}
Let $\,C_1\,$ be the constant in equation \eqref{enezero}. Then,
\begin{equation}\label{densasr2}
\big[\,\sigma(r)\,\big]^{\frac{t}{t-\,1}}\geq\,C_1\,(\log
2)\,(\log\,\log\,(r^{-\,1})\,)^{-1}\,,
\end{equation}
for each positive $\,r\,,$ in a arbitrarily
small neighborhood of zero (clearly, $\,r<\,1\,$ is assumed), then%
\be\label{e613}%
\lim_{r \rightarrow \,0} \om(r)=\,0\,.%
\ee%
In particular, the boundary point $\,y\,$ is regular.
\end{lemma}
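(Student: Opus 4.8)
The plan is to iterate the recursive oscillation bound \eqref{e68} along a geometric sequence of radii, and to observe that the resulting infinite product diverges to zero precisely because the double logarithm in \eqref{densasr2} is calibrated so as to produce a harmonic-type series. Fix $\,r_0\,$ small enough that $\,r_0<\,4^{-1}\ro_0\,$ and that \eqref{densasr2} holds at every radius $\,\leq\,r_0\,,$ and set $\,r_k=\,4^{-k}r_0\,$ for $\,k\geq\,0\,.$ For each $\,k\geq\,1\,,$ Theorem \ref{T61}, applied at $\,r=\,r_k\,$ with $\,n_0(r_k)\,$ the smallest integer satisfying \eqref{enezero}, gives
\[
\om(r_k)\leq\,\big(1-\,2^{-1}\eta_{n_0(r_k)}\big)\,\om(4\,r_k)=\,\big(1-\,2^{-(n_0(r_k)+2)}\big)\,\om(r_{k-1})\,,
\]
since $\,4\,r_k=\,r_{k-1}\,.$ Iterating from $\,k=\,1\,$ to $\,k=\,K\,$ yields
\[
\om(r_K)\leq\,\om(r_0)\,\prod_{k=1}^{K}\big(1-\,2^{-(n_0(r_k)+2)}\big)\,.
\]

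The heart of the argument is to convert the hypothesis \eqref{densasr2} into a lower bound for the individual factors. By \eqref{enezero3} one has $\,n_0(r_k)<\,1+\,C_1\,\sigma(2\,r_k)^{-t/(t-1)}\,,$ while applying \eqref{densasr2} at the radius $\,2\,r_k\,$ gives $\,C_1\,\sigma(2\,r_k)^{-t/(t-1)}\leq\,(\log 2)^{-1}\,\log\log\big((2\,r_k)^{-1}\big)\,.$ Since $\,\log\big((2\,r_k)^{-1}\big)=\,(2k-1)\log 2+\,\log(r_0^{-1})\,,$ we deduce
\[
2^{-(n_0(r_k)+2)}>\,\tfrac18\,2^{-C_1\sigma(2 r_k)^{-t/(t-1)}}\geq\,\tfrac18\,e^{-\log\log((2 r_k)^{-1})}=\,\frac{1}{8\,\big((2k-1)\log 2+\,\log(r_0^{-1})\big)}=:\,a_k\,.
\]
This is the point where the exact constant $\,C_1(\log 2)\,$ in \eqref{densasr2} is used: it is precisely what makes the exponent in $\,2^{-C_1\sigma^{-t/(t-1)}}\,$ collapse to a single logarithm, after which the choice $\,r_k=\,4^{-k}r_0\,$ turns that logarithm into a quantity linear in $\,k\,.$ Since $\,a_k\sim\,(16\,k\log 2)^{-1}\,,$ the series $\,\sum_k a_k\,$ diverges; using $\,\log(1-\,a_k)\leq\,-\,a_k\,$ it follows that $\,\prod_{k=1}^{K}(1-\,a_k)\to\,0\,,$ and hence $\,\om(r_K)\leq\,\om(r_0)\prod_{k=1}^{K}(1-\,a_k)\to\,0\,$ as $\,K\to\,\infty\,.$ Because $\,\om(r)\,$ is non-decreasing in $\,r\,,$ this gives $\,\lim_{r\to\,0}\om(r)=\,0\,,$ which is \eqref{e613}.

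Finally, to deduce regularity one invokes Theorem \ref{teo-B}. Write $\,u=\,u_{m,\rho_0}\,.$ By Lemma \ref{lemumcinco}, $\,u\leq\,m\,$ a.e. in $\,\Si\,$ and $\,u=\,m\,$ on $\,E_{\rho_0}\,;$ since \eqref{densasr2} forces $\,|\,E_r\,|>\,0\,$ for small $\,r\,$ and $\,E_r\subset\,E_{\rho_0}\,,$ we get $\,\textrm{Sup}_{I(y,r)}u=\,m\,,$ hence $\,\textrm{Inf}_{I(y,r)}u=\,m-\,\om(r)\to\,m\,.$ As $\,u\,$ solves $\,\cL\,u=\,0\,$ in $\,\Si-\,E_{\rho_0}\supset\,\Om\,,$ it is continuous in $\,\Om\,$ (Remark \ref{remdoisum}), so $\,m-\,\om(|x-\,y|)\leq\,u(x)\leq\,m\,$ for $\,x\in\,\Om\,$ near $\,y\,,$ i.e. $\,\lim_{x\in\,\Om,\,x\to\,y}u(x)=\,m\,.$ The identical argument --- or Remark \ref{rem-1.1} --- gives $\,\lim_{x\to\,y}u_{-m,\rho_0}(x)=\,-\,m\,.$ Since the couple $\,(\rho_0,m)\,$ was arbitrary, \eqref{zerodezasseis} holds for a sequence $\,(\rho_n,m_n)\to\,(0,+\infty)\,,$ and Theorem \ref{teo-B} shows that $\,y\,$ is a regular boundary point. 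The only real difficulty is the bookkeeping in the middle step: verifying that the constant in \eqref{densasr2} is exactly the threshold at which the telescoped product still diverges.
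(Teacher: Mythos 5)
Your proof is correct and follows the paper's own argument essentially verbatim: iterate the oscillation decay along $r_k=4^{-k}r_0$, use \eqref{enezero3} together with \eqref{densasr2} to bound $n_0(r_k)$ by a single logarithm of $k$, and conclude from divergence of the resulting harmonic-type series that the infinite product collapses to zero. The only cosmetic differences are that you work directly with the factor $2^{-1}\eta_{n_0}$ from \eqref{e68} rather than the paper's slightly weakened $4^{-1}\eta_{n_0}$ in \eqref{e614} (a harmless strengthening, valid since $p_0=0$ is assumed here), and that you spell out the final ``$\om(r)\to0$ implies regularity'' step which the paper defers to the proof of Theorem~\ref{teo-nopub}.
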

\begin{proof}
Fix a positive $\,r_0\,$ such that
\be\label{e614}%
\om(r)\leq\,(1-\,4^{\,-1}\,\eta_{\,n_0}\,)\,\om(4\,r)\,, \quad
\forall \, r<\,r_0\,.%
\ee%
This choice is possible, by \eqref{e68}. Further, define,
for each no-negative index $\,i\,$,%
\be\label{e615}%
r_i=\,4^{\,-\,i}\,r_0\,.%
\ee%
Furthermore, set $\,n_0(i)=\,n_0(r_i)\,.$ From \eqref{e614} it
follows that
$\,\om(r_i)\leq\,(1-\,4^{\,-1}\,\eta_{\,n_0(i)}\,)\,\om(r_{\,i-1})\,,$
for each $\,i\geq\,1\,,$ so%
\be\label{e616}
\om(r_i)\leq\,\prod_{k=\,1}^{i}(1-\,4^{\,-1}\,\eta_{\,n_0(k)}\,)\,\om(r_0)\,.%
\ee%
From \eqref{enezero3}, and \eqref{densasr2}, it follows that
$$
n_0(r) <\,1+\,(\log
2\,)^{-1}\,\log\big(\,\log(\,2\,r)^{\,-\,1}\,\big)\,.
$$
Hence,%
$$
2^{\,n_0(k) +\,1} \leq\,4\,
e^{\log\big(\,\log(\,2\,r)^{\,-\,1}\,\big)}=\,4\,\log(\,2\,r)^{\,-\,1}\,,
$$
where $\,r=\,r_k\,.$ It follows that%
\be\label{e618}%
\eta_{n_0(k)}\geq\,4^{\,-1}\,\big(\,\log\,(2\,r_k)^{-1}\,\big)^{\,-1}\,,
\quad \forall \,k \geq\,1\,.%
\ee%
Further, by appealing to \eqref{e615}, one gets
\be\label{e618}%
\eta_{n_0(k)}\geq\,\frac{1}{4\,\big(k\,\log\,4-\,\log\,(2\,r_0)\,\big)}\,.%
\ee%
Since $\,\log(1-\,x) \leq\,-\,x\,$ we get
$$
\log(\,1-\,4^{\,-1}\,\eta_{n_0(k)}\,)\leq\,\frac{-1}{4^2\,\big(k\,\log\,4-\,\log\,(2\,r_0)\,\big)}\,.%
$$
So,
$$
\sum_{k=\,1}^{+\,\infty}\,
\log\,(1-\,4^{\,-1}\,\eta_{\,n_0(k)}\,)\,=\,-\,\infty\,.
$$
Hence%
\be\label{e619}
\prod_{k=\,1}^{+\,\infty}(1-\,4^{\,-1}\,\eta_{\,n_0(k)}\,)=\,0\,.%
\ee%
Equation \eqref{e613} follows from \eqref{e616} and \eqref{e619}\,.
\end{proof}
\begin{remark}
\rm{ In the more general situation \eqref{zerotres},
\eqref{zeroquatro}, \eqref{zerocinco}, one has to appeal to
\eqref{doistres}. In this case \eqref{e68} is replaced by
\be\label{e68b}%
\om(r)\leq\,(\,1-\,2^{\,-\,1}\,)\,\om(4\,r)+\,(\,c+\,\eta_{\,n_0}^{-1})\,r\,.%
\ee%
So, in the proof of lemma \ref{L64}, one has to consider also the
event of the non existence of a positive $\,r_0\,$ for which
\eqref{e614} holds.}%
\end{remark}
Proof of Theorem \ref{teo-nopub}:
\begin{proof}
From lemma \ref{L64} we conclude that the capacitary potentials
$\,u_{\ro,\,m}(x)\,$ are continuous at point $\,y$. Since
$\,u_{\ro,\,m}=\,m\,$ on $\,E_{\ro_0}\,,$ and
$\,|\,E_{\ro}\,|>\,0\,$ for each positive $\,\ro\,,$ it must be
$\,u_{\ro,\,m}(y)=\,m\,.$  The continuity of the potentials
$\,u_{\ro,\,- m}(x)\,$ at $\,y\,$, and $\,u_{\ro,\,-
m}(y)=\,-\,m\,,$ are proved in a totally similar way or,
alternatively, by appealing to the  remark \ref{rem-1.1}.\par%
Finally, the regularity of the boundary point $\,y\,$ follows from
theorem \ref{teo-B}.
\end{proof}

\end{document}